\definecolor{labelkey}{rgb}{0,0.08,0.45}
\definecolor{refkey}{rgb}{0,0.6,0.0}
\definecolor{Brown}{rgb}{0.45,0.0,0.05}
\definecolor{lime}{rgb}{0.00,0.8,0.0}
\definecolor{lblue}{rgb}{0.5,0.5,0.99}
\newcommand{\GX}{\ensuremath{\Gamma}}
\newcommand{\To}{\ensuremath{\rightrightarrows}}
\newcommand{\fenv}[1]%
{\ensuremath{\,\overrightarrow{\operatorname{env}}_{#1}}}
\newcommand{\benv}[1]%
{\ensuremath{\,\overleftarrow{\operatorname{env}}_{#1}}}
\newcommand{\emp}{\ensuremath{\varnothing}}
\newcommand{\scal}[2]{\left\langle{#1},{#2}  \right\rangle}
\newcommand{\HH}{\ensuremath{\mathcal H}}
\newcommand{\RR}{\ensuremath{\mathbb R}}
\newcommand{\ball}{\ensuremath{{\mathbb B}}}
\newcommand{\NN}{\ensuremath{\mathbb N}}
\newcommand{\dom}{\ensuremath{\operatorname{dom}}}
\newcommand{\argmin}{\ensuremath{\operatorname*{argmin}}}
\newcommand{\gr}{\ensuremath{\operatorname{gr}}}
\newcommand{\prox}{\ensuremath{\operatorname{Prox}}}
\newcommand{\inte}{\ensuremath{\operatorname{int}}}
\newcommand{\ran}{\ensuremath{\operatorname{ran}}}
\newcommand{\Fix}{\ensuremath{\operatorname{Fix}}}
\newcommand{\Id}{\ensuremath{\operatorname{Id}}}
\newcommand{\fettf}{\ensuremath{\boldsymbol{f}}}
\newcommand{\fettla}{\ensuremath{\boldsymbol{\lambda}}}
\newcommand{\pflm}{\ensuremath{p_\gamma({\fettf,\fettla)}}}
\newcommand{\pfl}{\ensuremath{p({\fettf,\fettla})}}
\newcommand{\fourIdx}[5]{%
   \setbox1=\hbox{\ensuremath{^{#1}}}%
   \setbox2=\hbox{\ensuremath{_{#2}}}%
   \setbox5=\hbox{\ensuremath{#5}}%
   \hspace{\ifnum\wd1>\wd2\wd1\else\wd2\fi}%
   \ensuremath{\copy5^{\hspace{-\wd1}\hspace{-\wd5}#1\hspace{\wd5}#3}%
                     _{\hspace{-\wd2}\hspace{-\wd5}#2\hspace{\wd5}#4}%
}}
\newcommand{\Yosida}[2]{\ensuremath{\fourIdx{\,#1\negthinspace}{}{}{}{\displaystyle #2}}}
\newcommand{\yosida}[1]{\ensuremath{\fourIdx{\,#1\negthinspace}{}{}{}{\displaystyle }}}
\newcommand{\menge}[2]{\big\{{#1} \mid {#2}\big\}}
\newtheorem{theorem}{Theorem}[section]
\newtheorem{lemma}[theorem]{Lemma}
\newtheorem{corollary}[theorem]{Corollary}
\newtheorem{proposition}[theorem]{Proposition}
\newtheorem{definition}[theorem]{Definition}
\theoremstyle{plain}{\theorembodyfont{\rmfamily}
}
\theoremstyle{plain}{\theorembodyfont{\rmfamily}
}
\theoremstyle{plain}{\theorembodyfont{\rmfamily}
}
\theoremstyle{plain}{\theorembodyfont{\rmfamily}
\newtheorem{example}[theorem]{Example}}
\newtheorem{fact}[theorem]{Fact}
\theoremstyle{plain}{\theorembodyfont{\rmfamily}
\newtheorem{remark}[theorem]{Remark}}
\begin{document}

\title{\sffamily
Compositions and Averages of Two Resolvents:\\
Relative Geometry of Fixed Points Sets and \\
a Partial Answer to a Question by C.\ Byrne}

\author{
Xianfu Wang\thanks{Mathematics, Irving K.\ Barber School,
The University of British Columbia Okanagan, Kelowna,
B.C. V1V 1V7, Canada.
Email:
\texttt{shawn.wang@ubc.ca}.}
~~~and~
Heinz H.\ Bauschke\thanks{Mathematics, Irving K.\ Barber School,
The University of British Columbia Okanagan, Kelowna,
B.C. V1V 1V7, Canada. Email:
\texttt{heinz.bauschke@ubc.ca}.}
}

\date{March 24, 2010}

\maketitle

\vskip 8mm

\begin{abstract} \noindent
We show that the set of fixed points of the average
of two resolvents can be found from the
set of fixed points for compositions of two resolvents associated with scaled monotone operators.
Recently, the proximal average has attracted considerable
attention in convex analysis.
Our results imply that the minimizers of proximal-average
functions can be found from the set of fixed points for compositions of two
proximal mappings associated with scaled convex functions.
When both convex functions in the proximal average
are indicator functions of convex sets,
least squares solutions can be completely
recovered from the limiting cycles given by compositions of two projection
mappings. This provides a partial answer to a question posed by C.~Byrne.
A novelty of our approach is to use the notion of resolvent average and proximal average.
\end{abstract}

{\small
\sethlcolor{lblue}
\noindent
{\bfseries 2010 Mathematics Subject Classification:}\\
Primary 47H05, 47H09;
Secondary 26B25, 47H10, 47J25, 90C25.

\noindent {\bfseries Keywords:} \\
{Attouch-Th\'era duality, convex function,
Fenchel-Rockafellar duality, firmly nonexpansive mapping,
fixed point, Hilbert space, monotone operator,
Moreau envelope,
projection,
proximal average,
proximal mapping,
proximal point method,
resolvent average, resolvent composition,
strongly nonexpansive mapping,
Yosida regularization.
}
}

\section{Introduction}
Throughout, $\HH$ is a Hilbert space with inner
product $\scal{\cdot}{\cdot}$ and induced norm $\|\cdot\|$,
and $\Gamma(\HH)$ is the set of proper lower semicontinuous convex functions on $\HH$.
 Let
$A:\HH\To 2^{\HH}$ be a set-valued operator with graph
 $\gr A :=\menge{(x,u)\in\HH\times\HH}{u\in Ax}$.
The set-valued inverse $A^{-1}$ of $A$ has graph
 $\menge{(u,x)\in\HH}{u\in Ax}$, and the resolvent of $A$
is $J_{A}:=(A+\Id)^{-1}$ where $\Id:\HH\to\HH$
denotes the identity mapping.
The operator $A$ is monotone if $\scal{x-y}{u-v}\geq 0$ for all $(x,u), (y,v)\in\gr A$; $A$ is maximal monotone if $A$ is monotone and no proper enlargement of $\gr A$ is monotone.

Let $A_{1},A_{2}$ be two maximal monotone operators, and $\lambda_{1}+\lambda_{2}=1$ with $\lambda_{i}>0$. The \emph{resolvent average} of
$A_{1},A_{2}$ with weights $\lambda_{1},\lambda_{2}$ is defined by
$$A:=[\lambda_{1}J_{A_{1}}+\lambda_{2}J_{A_{2}}]^{-1}-\Id,$$
and it owes its name to the identity
$$J_{A}=\lambda_{1}J_{A_{1}}+\lambda_{2}J_{A_{2}}.$$

\emph{This paper is concerned with the relationships among the
fixed point sets of the resolvent average $J_{A}$,  the
resolvent compositions $J_{A_{1}/\lambda_{2}}J_{A_{2}/\lambda_{1}}$
and $J_{A_{2}/\lambda_{1}}J_{A_{1}/\lambda_{2}}$.
Although there appears to be no clear relationships between
the fixed point sets
of $\Fix (\lambda_{1}J_{A_{1}}+\lambda_{2}J_{A_{2}})$, and of
$\Fix J_{A_{1}}J_{A_{2}}$ and $\Fix J_{A_{2}}J_{A_{1}}$,
we will observe that $\Fix (\lambda_{1}J_{A_{1}}+\lambda_{2}J_{A_{2}})$
can be completely recovered from $\Fix (J_{A_{1}/\lambda_{2}}J_{A_{2}/\lambda_{1}})$ or $\Fix(J_{A_{2}/\lambda_{1}}J_{A_{1}/\lambda_{2}})$.}

Our investigation relies on the resolvent average and proximal average, \cite{Baus08,lucet,Baus09,moffat}.
Although compositions of resolvents (even more generally 
strongly nonexpansive mappings) have been studied
\cite{Baus05,comb,patrick2,cheney,bruck,Baus96,Baus04,luke},
the connections between the fixed point set of
compositions and the fixed point set of the average of two
resolvents appear to be new,
\emph{even when specialized to projection operators}.

The paper is organized as follows. Section~\ref{resultsknown} gathers some
known facts used in later sections. In Section~\ref{resolvcomp} we
concentrate on resolvents. In order to find zeros of the resolvent
average, we consider several inclusion problems. It turns out that their solution sets can be characterized in terms of fixed point sets
associated with the resolvent average and with resolvent compositions.
We provide homeomorphisms
among these fixed point sets.
In Section~\ref{proxcomp} we apply --- and refine ---
the results of Section~\ref{resolvcomp} to proximal mappings.
The inclusion problems now translate into finding minimizers of
proximal averages of convex functions.
The Yosida regularization is the key tool for monotone operators,
and its role is played by the Moreau envelope for convex functions.
When specialized to projections, our results say that the least square solutions can be completed recovered from the solutions of
alternating projections. This answers one of the
question posed in \cite[page~305]{charles} by Byrne for two sets, while the question for more than two sets is still open.
In  Section~\ref{sanitycheck} we give three
examples to illustrate our results. They illustrate that
a recovery of $\Fix (\lambda_{1}J_{A_{1}}+\lambda_{2}J_{A_{2}})$
from $\Fix (J_{A_{1}}J_{A_{2}})$ and $\Fix (J_{A_{2}}J_{A_{1}})$
seems impossible.

Our notation is standard and follows, e.g., \cite{Rock70,Rock98,Simons}.
For a monotone operator $A:\HH\To 2^{\HH}$, the sets
 $\dom A :=\menge{x\in\HH}{Ax\neq\emp},
 \ran A :=\menge{u\in \HH}{(\exists x \in \HH) \ u\in Ax}$
 are the domain, range of $A$ respectively.
It will be convenient to write
$\widetilde{A}:=(-\Id)\circ A^{-1}\circ (-\Id).$
The Yosida approximation of
$A$ of index $\gamma\in (0,+\infty)$ is given by
\begin{equation}\label{regular}
\Yosida{\gamma}{A} :=(\Id-J_{\gamma A})/\gamma=(\gamma\Id+A^{-1})^{-1}.
\end{equation}
For a mapping $T: D\to \HH$, where $D\subseteq \HH$,
the fixed point set of
$T$ will be denoted by $\Fix T:=\menge{x\in\HH}{Tx=x}$. A mapping $T$ between metric spaces $X$ and $Y$ is called a \emph{homeomorphism} if $T$ is a bijection (i.e., one-to-one and onto), $T$ is continuous and its inverse $T^{-1}$ is also continuous.
For a sequence $(x_{n})_{n\in\NN}$ of $\HH$, $x_{n}\rightharpoonup x\in\HH$ means that $(x_{n})_{n\in\NN}$
converges weakly to $x$.

For a proper lower semicontinuous function $f\in\GX(\HH)$,
the subdifferential operator  $\partial f: \HH\To\HH$ of $f$ which is
given by
$x\mapsto\partial f(x) :=\menge{x^*\in\HH}{f(y)\geq f(x)+\langle x^*,y-x\rangle \ \forall y\in \HH}$ is maximal monotone.
The resolvent of $\partial f$ is called the \emph{proximal mapping} of $f$, i.e., $\prox_{f}:=J_{\partial f}$.
Note that $\prox_{f}$ has a full domain.
Also, $f^*$ denotes the Fenchel conjugate of $f$,
i.e., $ (\forall x^*\in \HH) \ f^*(x^*):=\sup_{x}\big(\langle x^*,x\rangle-f(x)\big).$
The \emph{Moreau envelope} of $f$ with parameter $\gamma$ is given by
$$e_\gamma f(x) :=\inf_{y}\bigg(f(y)+\frac{1}{2\gamma}\|x-y\|^2\bigg)\quad \text{ for every $x\in\HH$}.$$
The domain of $f$ will be denoted by $\dom f$. For $f_{1},f_{2}\in \GX(\HH)$, $f_1\oplus f_{2}$ means
$(f_1\oplus f_{2})(x,y):=f_{1}(x)+f_{2}(y)$ for all $x,y\in\HH$. We let $j(x):=\|x\|^2/2$ for every $x\in\HH$
and we will use $j$ and $\|\cdot\|^2/2$ interchangeably.
For a subset $C\subseteq \HH$, the indicator function is defined by $\iota_{C}(x)=0$ if $x\in C$ and
$+\infty$ otherwise. We use $d_{C}(x):=\inf\{\|x-y\||\ y\in C\}$ for every
$x\in\HH$ for the distance function, $P_{C}:=\prox_{\iota_{C}}$ for the
projection on set $C$, $N_{C}:=\partial \iota_{C}$ for the normal cone
operator, and $\inte C$ for the interior of the set $C$.

\section{Auxiliary results and facts}\label{resultsknown}
We gather some facts on strongly nonexpansive mapping, on the
proximal point algorithm, and on
fixed point sets of compositions of two resolvents.

\begin{definition} Let $T:D\to\HH$, where $D\subseteq\HH$.
We say that
\begin{enumerate}
\item $T$ is \emph{nonexpansive} if
$$\|Tx-Ty\|\leq \|x-y\|\quad \forall x,y\in D;$$
\item $T$ is \emph{strongly nonexpansive} if $T$ is nonexpansive and
$(x_{n}-y_{n})-(Tx_{n}-Ty_{n})\rightarrow 0$ whenever $(x_{n})_{n\in \NN}, (y_{n})_{n\in \NN}$ are
sequences in $D$ such that $(x_{n}-y_{n})_{n\in \NN}$ is bounded and $\|x_{n}-y_{n}\|-\|Tx_{n}-Ty_{n}\|
\rightarrow 0$;
\item $T$ is \emph{firmly nonexpansive} if
$$\|Tx-Ty\|^2\leq \scal{Tx-Ty}{x-y}\quad \forall x, y\in D;$$
\item $T$ is \emph{attracting} if $T$ is nonexpansive and for every $x\not\in\Fix T, y\in\Fix T$ one has
$$\|Tx-Ty\|<\|x-y\|.$$
\end{enumerate}
\end{definition}
The following fact is well-known.
\begin{fact}\label{elementarym}
Let $B:\HH\To 2^{\HH}$ be monotone operator and $\gamma>0$. Then
\begin{enumerate}
\item $B^{-1}(0)=\Fix (J_{\gamma B})=(\Yosida{\gamma}{B})^{-1}(0)$.
\item $J_{B}$ is firmly nonexpansive.
\item $J_{B}$ has a full domain if and only if $B$ is maximal monotone.
\end{enumerate}
\end{fact}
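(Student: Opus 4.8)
The plan is to handle the three parts separately: (i) and (ii) reduce to unwinding the definitions together with the monotonicity inequality, whereas the substance of (iii) is Minty's theorem.

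For (i), I would simply chase definitions. Since $\gamma>0$, the chain of equivalences $x\in\Fix J_{\gamma B}\Leftrightarrow x=J_{\gamma B}x\Leftrightarrow x\in(\Id+\gamma B)x\Leftrightarrow 0\in\gamma Bx\Leftrightarrow 0\in Bx$ yields $\Fix J_{\gamma B}=B^{-1}(0)$. For the zeros of the Yosida approximation I would use the second representation in \eqref{regular}: from $\Yosida{\gamma}{B}=(\gamma\Id+B^{-1})^{-1}$ one gets $0\in\Yosida{\gamma}{B}x\Leftrightarrow x\in(\gamma\Id+B^{-1})(0)=B^{-1}(0)$, which closes the loop. (The first representation $\Yosida{\gamma}{B}=(\Id-J_{\gamma B})/\gamma$ gives the same conclusion, since there $\Yosida{\gamma}{B}x=0$ says precisely $J_{\gamma B}x=x$.) No maximality is needed here.

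For (ii), I would fix $x,y\in\dom J_B$, set $p:=J_Bx$ and $q:=J_By$, and read off $x-p\in Bp$ and $y-q\in Bq$ from $J_B=(\Id+B)^{-1}$. Monotonicity of $B$ then gives $\scal{p-q}{(x-p)-(y-q)}\geq 0$, which rearranges to $\|p-q\|^2\leq\scal{p-q}{x-y}$; this is exactly firm nonexpansiveness of $J_B$. The same monotonicity argument, applied to a single point of the domain, shows en passant that $J_B$ is single-valued, a fact I will reuse in (iii).

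For (iii), the forward implication is Minty's theorem: if $B$ is maximal monotone then $\ran(\Id+B)=\HH$, i.e.\ $\dom J_B=\HH$. This is where I expect the genuine work to lie, and I would either invoke it directly or reprove it via the surjectivity statement for $\Id+B$. For the converse I would argue by contradiction: if $B$ is monotone but not maximal, choose a monotone operator $C$ with $\gr B\subsetneq\gr C$; then $\gr(\Id+B)\subseteq\gr(\Id+C)$ forces $\gr J_B\subseteq\gr J_C$, and both resolvents are single-valued by the computation in (ii). Hence $\dom J_B=\HH$ makes $J_B=J_C$ as functions on all of $\HH$, so $\Id+B=(J_B)^{-1}=(J_C)^{-1}=\Id+C$ and thus $B=C$, contradicting $\gr B\subsetneq\gr C$. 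The only real obstacle is therefore Minty's theorem in part (iii); parts (i) and (ii) are routine.
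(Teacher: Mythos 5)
Your proposal is correct, and it is a fleshed-out version of what the paper does in outline: the paper disposes of (i) with ``readily verified using definitions'' and simply cites \cite{minty} and \cite[Fact 6.2, Corollary 6.3]{bartz} for (ii) and (iii), whereas you actually supply the arguments. Your part (i) is exactly the intended definition-chase, using both representations in \eqref{regular}; your part (ii) is the standard monotonicity computation that the cited sources contain. In (iii) you still outsource the genuinely hard direction (maximal monotone $\Rightarrow$ full domain) to Minty's surjectivity theorem, just as the paper does, but your converse via a proper monotone extension $C$ is a clean self-contained argument: the key point making it work is the observation (already extracted from your part (ii)) that resolvents of \emph{merely} monotone operators are single-valued on their domains, so $\gr J_B\subseteq\gr J_C$ together with $\dom J_B=\HH$ forces $J_B=J_C$ and hence $B=C$ after inverting. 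The net effect is that your write-up isolates Minty's theorem as the only non-elementary ingredient of the whole Fact, which is exactly the economy the paper's citation is implicitly claiming.
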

\begin{proof}
(i). This may be readily verified using definitions involved.
(ii) and (iii): See \cite{minty} or \cite[Fact 6.2, Corollary 6.3]{bartz}.
\end{proof}

\begin{fact}[Bruck \& Reich \cite{bruck}]\label{bruckreich}
Let $T$, and $(T_{i})_{1\leq i\leq m}$ be operators from $\HH$ to $\HH$. Then the following properties hold:
\begin{enumerate}
\item If $T$ is firmly nonexpansive, then it is strongly nonexpansive.
\item If the operators $(T_{i})_{1\leq i\leq m}$ are strongly nonexpansive, then the composition
$T_{1}\cdots T_{m}$ is also strongly nonexpansive.
\item If $T_{1}$ is strongly nonexpansive and $T_{2}$ is nonexpansive and $0<c<1$, then
$S=(1-c)T_{1}+cT_{2}$ is strongly nonexpansive.
\item Suppose that $T$ is strongly nonexpansive and let $x_{0}\in\HH$. If $\Fix T\neq \emp$, then
the sequence $(T^{n}x_{0})_{n}$ converges weakly to some point in $\Fix T$; otherwise, $\|T^nx_{0}\|\rightarrow\infty$.
\end{enumerate}
\end{fact}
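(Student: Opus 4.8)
The plan is to dispatch (i)--(iii) by elementary Hilbert-space identities and to isolate the genuinely asymptotic content in (iv). For (i), I would expand
\[
\|x-y\|^2-\|Tx-Ty\|^2
= 2\scal{Tx-Ty}{(\Id-T)x-(\Id-T)y}+\|(\Id-T)x-(\Id-T)y\|^2,
\]
so that firm nonexpansiveness (equivalently $\scal{Tx-Ty}{(\Id-T)x-(\Id-T)y}\ge 0$) yields
\[
\|(\Id-T)x-(\Id-T)y\|^2\le\|x-y\|^2-\|Tx-Ty\|^2
=\big(\|x-y\|-\|Tx-Ty\|\big)\big(\|x-y\|+\|Tx-Ty\|\big).
\]
Under the standing hypotheses $(x_n-y_n)$ bounded and $\|x_n-y_n\|-\|Tx_n-Ty_n\|\to0$, the right-hand side is a null factor times a bounded factor, so $(x_n-y_n)-(Tx_n-Ty_n)=(\Id-T)x_n-(\Id-T)y_n\to0$, which is exactly strong nonexpansiveness.

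For (ii) it suffices, by induction, to treat $m=2$ with $T=T_1T_2$ (a composition of nonexpansive maps, hence nonexpansive). I would telescope the total drop in norm,
\[
\|x_n-y_n\|-\|T_1T_2x_n-T_1T_2y_n\|
=\big(\|x_n-y_n\|-\|T_2x_n-T_2y_n\|\big)+\big(\|T_2x_n-T_2y_n\|-\|T_1T_2x_n-T_1T_2y_n\|\big),
\]
noting that both summands are nonnegative and their sum tends to $0$, so each tends to $0$; applying strong nonexpansiveness of $T_2$ to $(x_n,y_n)$ and of $T_1$ to the bounded-difference pair $(T_2x_n,T_2y_n)$ and adding the two resulting null sequences gives the claim. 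For (iii) I would use the convex-combination identity $\|(1-c)u+cv\|^2=(1-c)\|u\|^2+c\|v\|^2-c(1-c)\|u-v\|^2$ with $u=T_1x_n-T_1y_n$ and $v=T_2x_n-T_2y_n$. Since $\|u\|,\|v\|\le\|x_n-y_n\|$, this forces $\|x_n-y_n\|^2-\|Sx_n-Sy_n\|^2\ge c(1-c)\|u-v\|^2$, whence $u-v\to0$; and the squeeze $\|Sx_n-Sy_n\|\le(1-c)\|u\|+c\|v\|\le\|x_n-y_n\|$ forces $\|x_n-y_n\|-\|u\|\to0$, so strong nonexpansiveness of $T_1$ gives $(x_n-y_n)-u\to0$, and combining yields $(x_n-y_n)-(Sx_n-Sy_n)\to0$.

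For (iv) with $\Fix T\neq\emp$, fix $z\in\Fix T$. Nonexpansiveness makes $(\|T^nx_0-z\|)_{n}$ nonincreasing, so $(T^nx_0)_n$ is bounded and Fej\'er monotone with respect to $\Fix T$, and the consecutive differences $\|T^nx_0-z\|-\|T^{n+1}x_0-z\|$ tend to $0$. Applying strong nonexpansiveness to $x_n=T^nx_0$ and the constant sequence $y_n\equiv z$ then delivers asymptotic regularity $T^nx_0-T^{n+1}x_0\to0$. The demiclosedness principle for the nonexpansive $T$ (that is, $\Id-T$ is demiclosed at $0$) shows that every weak sequential cluster point of $(T^nx_0)$ lies in $\Fix T$, and Opial's lemma (Fej\'er monotonicity together with all cluster points lying in $\Fix T$) upgrades this to weak convergence of the whole orbit to a single fixed point.

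The delicate half, and the part I expect to be the main obstacle, is the \emph{otherwise} clause. Here I would appeal to the ergodic behaviour of nonexpansive iterations: $\tfrac1n(x_0-T^nx_0)\to v$, where $v$ is the least-norm element of the convex set $\overline{\ran(\Id-T)}$ and $\|v\|$ equals the minimal displacement $\inf_{x\in\HH}\|x-Tx\|$; moreover, for \emph{strongly} nonexpansive $T$ the difference sequence $(\Id-T)T^nx_0$ converges to $v$ strongly (not merely in the Ces\`aro sense). If $\Fix T=\emp$ and $\|v\|>0$, then $\|x_0-T^nx_0\|$ grows linearly and $\|T^nx_0\|\to\infty$; if $\|v\|=0$, then $T^nx_0-T^{n+1}x_0\to0$, and were some subsequence of $(T^nx_0)$ bounded, a weak cluster point together with demiclosedness would produce a fixed point, contradicting $\Fix T=\emp$, so again $\|T^nx_0\|\to\infty$. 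Unlike (i)--(iii), this dichotomy is not pure Hilbert-space algebra: it rests on the asymptotic/minimal-displacement theory of strongly nonexpansive maps and on demiclosedness, and one must take care to promote ``the orbit is unbounded'' to the full conclusion $\|T^nx_0\|\to\infty$ rather than settling for $\limsup_n\|T^nx_0\|=\infty$.
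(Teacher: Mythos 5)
The paper never proves this statement: it is imported verbatim as a Fact from Bruck \& Reich \cite{bruck}, so there is no internal argument to compare yours against; what follows assesses your proof on its own merits. Parts (i)--(iii) are correct and complete: the identity $\|x-y\|^2-\|Tx-Ty\|^2=2\scal{Tx-Ty}{(\Id-T)x-(\Id-T)y}+\|(\Id-T)x-(\Id-T)y\|^2$, the nonnegative telescoping for compositions, and the convex-combination identity are exactly the right Hilbert-space tools, and your bookkeeping (bounded factor times null factor, splitting $(x_n-y_n)-(Sx_n-Sy_n)=\big[(x_n-y_n)-u_n\big]+c\,[u_n-v_n]$) is sound. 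Note that these arguments are genuinely Hilbert-specific, which is fine here but narrower than the original Banach-space treatment of Bruck--Reich. In (iv), the $\Fix T\neq\emp$ half is also complete: applying strong nonexpansiveness to the pair $(T^n x_0, z)$ with $z\in\Fix T$ to extract asymptotic regularity, then demiclosedness of $\Id-T$ and Opial's lemma, is the standard and correct route. The one place your proof is not self-contained is the \emph{otherwise} clause, where you invoke two nontrivial external results (Pazy's theorem that $\tfrac1n(x_0-T^nx_0)$ converges strongly to the least-norm element $v$ of $\overline{\operatorname{ran}}(\Id-T)$, and the Bruck--Reich result that $(\Id-T)T^nx_0\to v$ for strongly nonexpansive $T$); both are correctly stated and correctly used, but citing the second amounts to re-importing the machinery of the very paper the Fact comes from. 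You could make the $\|v\|=0$ case self-contained: given $\varepsilon>0$, choose $y_\varepsilon$ with $\|y_\varepsilon-Ty_\varepsilon\|\leq\varepsilon$ and apply strong nonexpansiveness to the two orbits $x_n=T^nx_0$, $y_n=T^ny_\varepsilon$ (their difference is bounded and $\|x_n-y_n\|$ is nonincreasing), which gives $(x_n-x_{n+1})-(y_n-y_{n+1})\to 0$ and hence $\limsup_n\|T^nx_0-T^{n+1}x_0\|\leq\varepsilon$; letting $\varepsilon\downarrow 0$ yields asymptotic regularity without citing the convergence-of-differences theorem, after which your demiclosedness argument (and your correct observation that ``no bounded subsequence'' is equivalent to $\|T^nx_0\|\to\infty$) finishes the case.
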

Fact~\ref{elementarym}(ii) and Fact~\ref{bruckreich} immediately 
give the following result.
\begin{corollary} Let $A_{1}, A_{2}:\HH\To\HH$ be maximal monotone operators. For $x_{0}\in\HH$
let $(x_{n})_{n\in \NN}$ be generated by
 $$(\forall n \in \NN)\quad  x_{n+1}=J_{A_{1}}J_{A_{2}}x_{n};$$
 For $y_{0}\in \HH$ let $(y_{n})_{n\in\NN}$ be generated by
 $$(\forall n\in \NN) \quad y_{n+1}=J_{A_{2}}J_{A_{1}}y_{n}.$$
If $\Fix J_{A_{1}}J_{A_{2}}\neq\emp$, then $(x_{n})$ converges weakly to some point of $\Fix J_{A_{1}}J_{A_{2}}$, and $(y_{n})$ converges weakly to some point of $\Fix J_{A_{2}}J_{A_{1}}$.
\end{corollary}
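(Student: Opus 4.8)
The plan is to recognize each iteration as the orbit of a strongly nonexpansive self-map of $\HH$ and then invoke Fact~\ref{bruckreich}(iv). First I would note that, since $A_{1}$ and $A_{2}$ are maximal monotone, Fact~\ref{elementarym}(iii) guarantees that $J_{A_{1}}$ and $J_{A_{2}}$ have full domain, so both recursions are well-defined on all of $\HH$, with $x_{n}=(J_{A_{1}}J_{A_{2}})^{n}x_{0}$ and $y_{n}=(J_{A_{2}}J_{A_{1}})^{n}y_{0}$. By Fact~\ref{elementarym}(ii) the resolvents $J_{A_{1}},J_{A_{2}}$ are firmly nonexpansive, hence strongly nonexpansive by Fact~\ref{bruckreich}(i); the compositions $J_{A_{1}}J_{A_{2}}$ and $J_{A_{2}}J_{A_{1}}$ are therefore strongly nonexpansive by Fact~\ref{bruckreich}(ii).

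The step that needs a little care is supplying the nonemptiness hypothesis of Fact~\ref{bruckreich}(iv) for the second orbit: we are given only $\Fix J_{A_{1}}J_{A_{2}}\neq\emp$, whereas weak convergence of $(y_{n})$ requires $\Fix J_{A_{2}}J_{A_{1}}\neq\emp$. I would close this gap by a short conjugation argument. If $x\in\Fix J_{A_{1}}J_{A_{2}}$, set $z:=J_{A_{2}}x$; then $J_{A_{1}}z=J_{A_{1}}J_{A_{2}}x=x$, whence $J_{A_{2}}J_{A_{1}}z=J_{A_{2}}x=z$, so $z\in\Fix J_{A_{2}}J_{A_{1}}$. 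Thus $J_{A_{2}}$ carries $\Fix J_{A_{1}}J_{A_{2}}$ into $\Fix J_{A_{2}}J_{A_{1}}$, and symmetrically $J_{A_{1}}$ carries the latter into the former, so the two fixed point sets are nonempty simultaneously. In particular, $\Fix J_{A_{1}}J_{A_{2}}\neq\emp$ forces $\Fix J_{A_{2}}J_{A_{1}}\neq\emp$.

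With both compositions strongly nonexpansive and both fixed point sets nonempty, Fact~\ref{bruckreich}(iv) applies directly: $(x_{n})=\big((J_{A_{1}}J_{A_{2}})^{n}x_{0}\big)$ converges weakly to a point of $\Fix J_{A_{1}}J_{A_{2}}$, and $(y_{n})=\big((J_{A_{2}}J_{A_{1}})^{n}y_{0}\big)$ converges weakly to a point of $\Fix J_{A_{2}}J_{A_{1}}$. I do not expect any genuine obstacle here, since the result is essentially an assembly of the cited facts; the only substantive point beyond that assembly is the observation that the two fixed point sets vanish together, which is exactly the conjugation identity above.
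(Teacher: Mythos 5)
Your proposal is correct and follows essentially the same route as the paper, which proves the corollary simply by combining Fact~\ref{elementarym}(ii) with Fact~\ref{bruckreich} (firm nonexpansiveness $\Rightarrow$ strong nonexpansiveness, closure under composition, then weak convergence of orbits). The only addition is your conjugation argument showing $\Fix J_{A_{1}}J_{A_{2}}\neq\emp \Rightarrow \Fix J_{A_{2}}J_{A_{1}}\neq\emp$, a point the paper's one-line proof leaves implicit (it also follows from Fact~\ref{key1}(i), since $F=J_{\gamma B}(E)$); making it explicit is a genuine, if small, improvement in rigor.
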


\begin{fact}[Rockafellar \cite{rockprox}]\label{terry}
Let $A:\HH\To \HH$ be maximal monotone.
Assume that $A^{-1}(0)\neq\emp$. For any starting point $x_{0}$, the sequence $(x_{n})$ generated by
the proximal point algorithm
$$x_{n+1}=J_{A}(x_{n})=(\Id+A)^{-1}(x_{n})$$
converges weakly to a point in $A^{-1}(0)$ and
$x_{n+1}-x_{n}\to 0$.
\end{fact}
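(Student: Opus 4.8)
The plan is to recognize the proximal point iteration as the orbit of the single map $T:=J_{A}$ and then to invoke Fact~\ref{bruckreich}(iv). First, since $A$ is maximal monotone, Fact~\ref{elementarym}(iii) guarantees that $J_{A}$ has full domain, so the sequence $(x_{n})$ is well defined and satisfies $x_{n}=J_{A}^{n}x_{0}=T^{n}x_{0}$. By Fact~\ref{elementarym}(ii), $T$ is firmly nonexpansive, hence strongly nonexpansive by Fact~\ref{bruckreich}(i). Applying Fact~\ref{elementarym}(i) with $\gamma=1$ and $B=A$ gives $\Fix T=A^{-1}(0)$, which is nonempty by hypothesis. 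Fact~\ref{bruckreich}(iv) then yields that $(x_{n})=(T^{n}x_{0})$ converges weakly to some point of $\Fix T=A^{-1}(0)$, which establishes the first conclusion.

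It remains to prove the asymptotic regularity $x_{n+1}-x_{n}\to 0$. I would fix any $y\in\Fix T=A^{-1}(0)$ and observe that, since $T$ is nonexpansive with $Ty=y$, the scalars $\|x_{n}-y\|=\|T^{n}x_{0}-y\|$ form a nonincreasing sequence bounded below by $0$, hence convergent. Consequently $\|x_{n}-y\|-\|Tx_{n}-Ty\|=\|x_{n}-y\|-\|x_{n+1}-y\|\to 0$, and $(x_{n}-y)$ is bounded. Feeding the sequence $(x_{n})$ together with the constant sequence $y_{n}\equiv y$ into the definition of strong nonexpansiveness then forces $(x_{n}-y)-(Tx_{n}-Ty)=x_{n}-x_{n+1}\to 0$, which is exactly the claim.

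The proof is short because the substantive analytic content --- the Opial-type weak-convergence argument for orbits of strongly nonexpansive maps --- is already packaged in Fact~\ref{bruckreich}(iv) of Bruck \& Reich. Given that, the only genuinely delicate step is the choice to test strong nonexpansiveness against the \emph{constant} sequence anchored at a fixed point; the boundedness and Fej\'er-type monotonicity that make this work come for free from nonexpansiveness. I therefore anticipate no real obstacle beyond correctly verifying the hypotheses of the two cited facts and noting that $\gamma=1$ specializes Fact~\ref{elementarym}(i) to $\Fix J_{A}=A^{-1}(0)$.
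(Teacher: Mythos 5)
Your proof is correct, but it takes a genuinely different route from the paper's: the paper offers no argument at all for Fact~\ref{terry}, simply citing Rockafellar's article \cite{rockprox}, where weak convergence of the (more general, inexact, variable-parameter) proximal point algorithm is proved via Fej\'er-monotonicity and maximal-monotonicity arguments. What you do instead is derive the statement as a corollary of the Bruck--Reich machinery that the paper quotes separately: full domain and firm nonexpansiveness of $J_{A}$ (Fact~\ref{elementarym}(ii)--(iii)), firmly nonexpansive $\Rightarrow$ strongly nonexpansive (Fact~\ref{bruckreich}(i)), $\Fix J_{A}=A^{-1}(0)$ (Fact~\ref{elementarym}(i) with $\gamma=1$), and weak convergence of orbits of strongly nonexpansive maps (Fact~\ref{bruckreich}(iv)). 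Your handling of asymptotic regularity is also sound: anchoring at a fixed point $y$, nonexpansiveness makes $\bigl(\|x_{n}-y\|\bigr)_{n}$ nonincreasing, hence convergent, so the pair consisting of $(x_{n})_{n}$ and the constant sequence at $y$ satisfies both hypotheses in the definition of strong nonexpansiveness, which then yields $x_{n}-x_{n+1}\to 0$. The trade-off is clear: your argument is short and entirely self-contained within the facts the paper already states, which is arguably more satisfying in context; Rockafellar's theorem as proved in \cite{rockprox} is stronger, covering inexact iterations $x_{n+1}\approx J_{c_{n}A}(x_{n})$ with variable parameters $c_{n}$, a generality neither your derivation nor the paper's use of the fact requires.
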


Let $R$ denote the ``transpose'' mapping on $\HH\times\HH$,
namely $R:\HH\times \HH\to \HH\times \HH: (x,y)\mapsto (y,x)$.

\begin{fact} \emph{(See \cite{Baus05}.)}\label{key1}
 Let $A, B:\HH\To \HH$ be maximal monotone operators and
 $\gamma\in (0, +\infty)$.
 Set
\begin{align*}
S & :=(\Id-R+\gamma (A\times B))^{-1}(0,0).\\
S^* & :=[(\Id-R)^{-1}+(A^{-1}\times B^{-1})\circ (\Id/\gamma)]^{-1}(0,0).\\
E& :=(A+\yosida{\gamma}B)^{-1}(0),\quad F:=(B+\yosida{\gamma}A)^{-1}(0).\\
u^*& : =J_{(A^{-1}+\widetilde{B})/\gamma}(0) \text{ and } v^*:=J_{(\widetilde{A}+B^{-1})/\gamma}(0).
\end{align*}
Then $S\neq\varnothing$ 
$\Leftrightarrow$ $S^*\neq\varnothing$
$\Leftrightarrow$ $E\neq\varnothing$
$\Leftrightarrow$ $F\neq \varnothing$
$\Leftrightarrow$ $u^*$ is well defined
$\Leftrightarrow$ $v^*$ is well defined,
in which case the following hold.
\begin{enumerate}
\item $E=\Fix J_{\gamma A}J_{\gamma B}=J_{\gamma A}(F)$ and $F=\Fix J_{\gamma B}J_{\gamma A}
=J_{\gamma B}(E)$.
\item $S=\Fix J_{\gamma(A\times B)}R=(E\times F)\cap\gr J_{\gamma B}$.
\item $S^*=\{(\gamma u^*,\gamma v^*)\}$ and $u^*=-v^*$.
\item $S^*=(R-\Id)(S)$.
\item $J_{\gamma B}|_{E}:E\to F: x\mapsto  x+\gamma u^*$ is a bijection with inverse
$J_{\gamma A}|_{F}: F\to E: y\mapsto y+\gamma v^*$.
\item $E=A^{-1}(u^*)\cap (\yosida{\gamma}B)^{-1}(v^*)$
and $F=(\yosida{\gamma}A)^{-1}(u^*)\cap B^{-1}(v^*).$
\item $S=(E\times F)\cap (R-\Id)^{-1}(S^*)$.
\end{enumerate}
\end{fact}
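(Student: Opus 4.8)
The plan is to read every object in the statement as an avatar of the single inclusion on the product space $\HH\times\HH$. First I would unwind the definition of $S$: a pair $(x,y)$ lies in $S$ precisely when $(0,0)\in(\Id-R)(x,y)+\gamma(A\times B)(x,y)$, which splits into the two scalar inclusions $(y-x)/\gamma\in Ax$ and $(x-y)/\gamma\in By$. Reading the second as $y=J_{\gamma B}x$ and the first as $x=J_{\gamma A}y$ identifies $S$ with $\Fix J_{\gamma(A\times B)}R$, using $J_{\gamma(A\times B)}=J_{\gamma A}\times J_{\gamma B}$; this is item (ii). Eliminating $y=J_{\gamma B}x$ turns the first inclusion into $0\in Ax+\yosida{\gamma}B\,x$, since $(x-J_{\gamma B}x)/\gamma=\yosida{\gamma}B\,x\in B(J_{\gamma B}x)$ holds automatically, i.e.\ $x\in E$, and simultaneously into $x=J_{\gamma A}J_{\gamma B}x$. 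This gives $E=\Fix J_{\gamma A}J_{\gamma B}=(A+\yosida{\gamma}B)^{-1}(0)$ and, symmetrically, the corresponding statement for $F$, together with the fact that the two coordinate projections of $S$ land in $E$ and $F$ respectively.

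Next I would establish the bijection and the primal nonemptiness equivalences. The engine is cocoercivity of the Yosida approximation (a consequence of firm nonexpansiveness of $J_{\gamma B}$, Fact~\ref{elementarym}): for $x_1,x_2\in E$ one has $-\yosida{\gamma}B\,x_i\in Ax_i$, so monotonicity of $A$ forces $\scal{x_1-x_2}{\yosida{\gamma}B\,x_1-\yosida{\gamma}B\,x_2}\le 0$, while $\gamma$-cocoercivity of $\yosida{\gamma}B$ forces the reverse inequality with a squared-norm lower bound. Hence $\yosida{\gamma}B$ is \emph{constant} on $E$, with value I call $v^*$, and correspondingly $-v^*=:u^*\in Ax$ for every $x\in E$; this is item (vi). Since then $J_{\gamma B}x=x-\gamma\,\yosida{\gamma}B\,x=x+\gamma u^*$ on $E$, the map $J_{\gamma B}|_E\colon x\mapsto x+\gamma u^*$ is an affine injection into $F$, and the symmetric computation produces the inverse $J_{\gamma A}|_F\colon y\mapsto y+\gamma v^*$. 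This gives item (v), completes item (i) via $F=J_{\gamma B}(E)$ and $E=J_{\gamma A}(F)$, and in particular yields $E\neq\emp\Leftrightarrow F\neq\emp$; combined with the first paragraph and the lift $x\mapsto(x,J_{\gamma B}x)$ we get $S\neq\emp\Leftrightarrow E\neq\emp\Leftrightarrow F\neq\emp$.

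The dual side is where I expect the real work. I would recognize $S^*$ as the Attouch--Th\'era dual inclusion $0\in(\Id-R)^{-1}(p,q)+(A^{-1}\times B^{-1})(p/\gamma,q/\gamma)$ and exploit that $\ran(\Id-R)$ is the anti-diagonal $\{(w,-w)\}$: this immediately forces $q=-p$, the source of the relation $u^*=-v^*$ in item (iii). Writing $p=\gamma u^*$ and eliminating the skew variable, membership $(p,q)\in S^*$ becomes exactly $0\in\gamma u^*+(A^{-1}+\widetilde B)(u^*)$, i.e.\ $u^*\in J_{(A^{-1}+\widetilde B)/\gamma}(0)$; as $\Id+(A^{-1}+\widetilde B)/\gamma$ is strongly monotone, its inverse is single-valued, so $u^*$ is unique when it exists and $S^*=\{(\gamma u^*,\gamma v^*)\}$. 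This proves item (iii) and closes the loop $S\neq\emp\Leftrightarrow S^*\neq\emp\Leftrightarrow u^*\text{ well defined}\Leftrightarrow v^*\text{ well defined}$. Applying $R-\Id$ to any $(x,y)\in S$ gives $(y-x,x-y)=(\gamma u^*,\gamma v^*)$, which is item (iv); intersecting the affine fibre $(R-\Id)^{-1}(S^*)$ with $E\times F$ recovers $\gr J_{\gamma B}$ and hence item (vii).

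The principal obstacle is the dual bookkeeping: one must check that the constant $u^*$ extracted on the primal side via cocoercivity coincides with the resolvent value $J_{(A^{-1}+\widetilde B)/\gamma}(0)$, and that $S^*$ is genuinely a singleton. The subtlety is that $A^{-1}+\widetilde B$ need not be maximal monotone, so I cannot invoke maximality to guarantee well-definedness of its resolvent; instead I rely only on single-valuedness of $(\Id+M)^{-1}$ for monotone $M$, and draw nonemptiness of its domain at $0$ from the equivalent nonemptiness of $S$ already secured on the primal side. Everything else reduces to matching the two scalar inclusions with the resolvent identities $y=J_{\gamma B}x$, $x=J_{\gamma A}y$ and the Yosida identity $\yosida{\gamma}B=(\Id-J_{\gamma B})/\gamma=(\gamma\Id+B^{-1})^{-1}$.
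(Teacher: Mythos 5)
You should first be aware that the paper contains no proof of this statement: it is imported as a Fact with the citation ``(See \cite{Baus05}.)'', so there is no in-text argument to compare against, only the cited source --- whose strategy (Attouch--Th\'era duality on the product space together with firm nonexpansiveness of resolvents) your proof essentially reconstructs. On its merits, your argument is sound. The primal unwinding is correct: $(x,y)\in S$ iff $y=J_{\gamma B}x$ and $x=J_{\gamma A}y$, which gives (ii), $E=\Fix J_{\gamma A}J_{\gamma B}$, and the lift $x\mapsto(x,J_{\gamma B}x)$ identifying $E$ with the first coordinate projection of $S$. The constancy argument is the right engine: on $E$ one has $-\yosida{\gamma}B\,x\in Ax$, so monotonicity of $A$ gives $\scal{x_1-x_2}{\yosida{\gamma}B\,x_1-\yosida{\gamma}B\,x_2}\le 0$, while firm nonexpansiveness of $\Id-J_{\gamma B}$ (equivalent to that of $J_{\gamma B}$ from Fact~\ref{elementarym}(ii)) gives $\scal{x_1-x_2}{\yosida{\gamma}B\,x_1-\yosida{\gamma}B\,x_2}\ge\gamma\|\yosida{\gamma}B\,x_1-\yosida{\gamma}B\,x_2\|^2$; hence $\yosida{\gamma}B$ is constant on $E$, which delivers (v), (vi), (i) and the translation formulas. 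The dual computation is also correct: $\ran(\Id-R)$ is the anti-diagonal, forcing $q=-p$ for $(p,q)\in S^*$, and substituting $p=\gamma u^*$ turns membership in $S^*$ into exactly $0\in\gamma u^*+(A^{-1}+\widetilde{B})(u^*)$, whose solution is unique because $\Id+(A^{-1}+\widetilde{B})/\gamma$ is strongly monotone; the identification of the primal constant with $J_{(A^{-1}+\widetilde{B})/\gamma}(0)$ works as you indicate, since for $x\in E$ and $y=J_{\gamma B}x$ the pair $(x,y)$ witnesses that dual inclusion.

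The one step you assert but never argue is the passage from the dual side back to the primal side. Your chain establishes $S\neq\emp\Leftrightarrow E\neq\emp\Leftrightarrow F\neq\emp$, then $S^*\neq\emp\Leftrightarrow u^*$ well defined $\Leftrightarrow v^*$ well defined, and finally $S\neq\emp\Rightarrow u^*$ well defined; but ``closing the loop'' requires $S^*\neq\emp\Rightarrow S\neq\emp$, which nothing in your text supplies. Fortunately your own unpacking of $S^*$ provides it in one line: if $0\in\gamma u^*+(A^{-1}+\widetilde{B})(u^*)$, then there exist $a\in A^{-1}(u^*)$ and $b\in B^{-1}(-u^*)$ with $b=a+\gamma u^*$, and then $(b-a)/\gamma=u^*\in A(a)$ and $(a-b)/\gamma=-u^*\in B(b)$, i.e.\ $(a,b)\in S$. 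This is precisely the constraint-qualification-free feature of Attouch--Th\'era duality (Fact~\ref{dualityat}) that the paper emphasizes: dual witnesses \emph{are} primal solutions. Add that sentence and your proof is complete.
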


\begin{fact} \emph{(See, e.g., \cite[Propositions 2.10, 2.12]{Baus96}.)} \label{onesetonly}
Assume that $T_{1}, T_{2}$ are attracting and
$\Fix T_{1}\cap \Fix T_2\neq\varnothing$.
Let $\lambda_{1}+\lambda_{2}=1$, with each $\lambda_{i}>0$. Then
$$\Fix(\lambda_1 T_{1}+\lambda_{2}T_{2})=\Fix T_{1}\cap \Fix T_{2}=\Fix(T_{1}\circ T_{2})
=\Fix(T_{2}\circ T_{1}).$$
\end{fact}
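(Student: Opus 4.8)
The plan is to set $F := \Fix T_{1}\cap \Fix T_{2}$, fix once and for all a reference point $z\in F$ (this is where the nonemptiness hypothesis enters), and to show that each of the three remaining sets coincides with $F$. The inclusions $F\subseteq\Fix(\lambda_{1}T_{1}+\lambda_{2}T_{2})$, $F\subseteq\Fix(T_{1}\circ T_{2})$ and $F\subseteq\Fix(T_{2}\circ T_{1})$ are immediate, since a common fixed point of $T_{1}$ and $T_{2}$ is left unmoved by every convex combination and by every composition. The real content is therefore the three reverse inclusions, and the single mechanism driving all of them is the same: nonexpansiveness forces a short chain of inequalities to collapse to equalities, and the attracting property then upgrades each resulting norm-equality $\|T_{i}x-T_{i}z\|=\|x-z\|$ into the membership $x\in\Fix T_{i}$.

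For the average, I would take $x$ with $\lambda_{1}T_{1}x+\lambda_{2}T_{2}x=x$ and write, using $z=T_{1}z=T_{2}z$ and $\lambda_{1}+\lambda_{2}=1$, the identity $x-z=\lambda_{1}(T_{1}x-T_{1}z)+\lambda_{2}(T_{2}x-T_{2}z)$. The triangle inequality together with nonexpansiveness of each $T_{i}$ yields
$$\|x-z\|\leq \lambda_{1}\|T_{1}x-T_{1}z\|+\lambda_{2}\|T_{2}x-T_{2}z\|\leq \|x-z\|,$$
so equality holds throughout. Because $\lambda_{1},\lambda_{2}>0$ and each summand satisfies $\|T_{i}x-T_{i}z\|\leq\|x-z\|$, this forces $\|T_{1}x-T_{1}z\|=\|T_{2}x-T_{2}z\|=\|x-z\|$. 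Were $x\notin\Fix T_{1}$, the attracting property would give the strict inequality $\|T_{1}x-T_{1}z\|<\|x-z\|$, a contradiction; hence $x\in\Fix T_{1}$, and symmetrically $x\in\Fix T_{2}$, so $x\in F$.

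For the composition $T_{1}\circ T_{2}$, I would take $x$ with $T_{1}T_{2}x=x$ and telescope:
$$\|x-z\|=\|T_{1}T_{2}x-T_{1}z\|\leq \|T_{2}x-T_{2}z\|\leq \|x-z\|,$$
again using $z=T_{1}z=T_{2}z$ and nonexpansiveness. Equality throughout gives $\|T_{2}x-T_{2}z\|=\|x-z\|$; since $T_{2}$ is attracting with $z\in\Fix T_{2}$, this forces $T_{2}x=x$, whence $x=T_{1}T_{2}x=T_{1}x$ and $x\in F$. The inclusion $\Fix(T_{2}\circ T_{1})\subseteq F$ follows by interchanging the roles of $T_{1}$ and $T_{2}$, now invoking that $T_{1}$ is attracting. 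Combining the six inclusions yields the fourfold equality.

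I do not anticipate a serious obstacle here; the proof is elementary once the reference point $z$ is fixed. The only step requiring genuine care is the passage from the \emph{averaged} estimate to the \emph{individual} equalities $\|T_{i}x-T_{i}z\|=\|x-z\|$, where the strict positivity of both weights $\lambda_{1},\lambda_{2}$ is indispensable: if one weight vanished, the average would reduce to a single mapping, only one of the two attracting properties would be available, and the corresponding fixed-point conclusion would fail.
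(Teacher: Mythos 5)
Your proof is correct. Note that the paper itself does not prove this Fact at all --- it is quoted verbatim from \cite{Baus96} (Propositions 2.10 and 2.12) --- so there is no internal proof to compare against; your argument supplies exactly the content hidden behind that citation, and it is essentially the standard one used there. The structure is sound: the three inclusions $\Fix T_1\cap\Fix T_2\subseteq\Fix(\lambda_1T_1+\lambda_2T_2)$, $\subseteq\Fix(T_1\circ T_2)$, $\subseteq\Fix(T_2\circ T_1)$ are trivial, and each reverse inclusion follows by collapsing a chain of nonexpansiveness inequalities anchored at a common fixed point $z$, after which the attracting property upgrades the resulting equality $\|T_ix-T_iz\|=\|x-z\|$ (with $T_iz=z$) to $x\in\Fix T_i$. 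Your handling of the average is the only place requiring care, and you handle it correctly: equality in $\lambda_1\|T_1x-z\|+\lambda_2\|T_2x-z\|=\|x-z\|$ together with $\|T_ix-z\|\le\|x-z\|$ and $\lambda_i>0$ forces both individual equalities. Two remarks: first, your argument never invokes the inner product --- only the triangle inequality and the definition of attracting --- so it is valid in any normed space; the attracting hypothesis is precisely what replaces strict convexity or the parallelogram law here. Second, in the composition case it is worth observing that attractingness of only \emph{one} of the two mappings (the inner one, $T_2$ for $\Fix(T_1\circ T_2)$) is actually used, whereas the average genuinely needs both; this asymmetry is consistent with the cited propositions, which treat compositions and convex combinations separately.
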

The class of attractive mappings properly contains the class of strongly nonexpansive mappings. See also
\cite[Lemma 2.1]{bruck} for results related to Fact~\ref{onesetonly}.

The following two facts relate the solutions of primal problems
to the solutions of certain dual problems.
For functions, a constraint qualification is needed; however,
for monotone operators, the ensuing duality requires no
constraint qualification.

\begin{fact}[Fenchel-Rockafellar duality \cite{Rock98,Zalinescu}]\label{duality}
Assume that $f,g\in \GX(\HH)$ and $L:\HH\rightarrow\HH$ is a continuous linear operator. Suppose
there exists $x_{0}\in\dom f\cap L^{-1}(\dom g)$ such that $g$ is continuous at $Lx_{0}$. Then
$$\inf_{x\in\HH}\big(f(x)+g(Lx)\big)=-\min_{y^*\in\HH}
\big(f^*(-L^*y^*)+g^*(y^*)\big).$$
Furthermore, $\bar{x}$ is a minimizer  for $f+g\circ L$ if and only if there exists $\bar{y}^*\in\HH$ such that
$$-L^*\bar{y}^*\in\partial f(\bar{x}),\quad \bar{y}^*\in \partial g(L\bar{x}).$$
\end{fact}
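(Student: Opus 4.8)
The plan is to split the argument into the duality equality (with attainment of the dual minimum) and the characterization of primal minimizers, and to treat weak duality first since it is elementary and sign-tracking is the only real risk there. For any $x,y^*\in\HH$ the Fenchel--Young inequality gives $f(x)+f^*(-L^*y^*)\ge\scal{-L^*y^*}{x}=-\scal{y^*}{Lx}$ and $g(Lx)+g^*(y^*)\ge\scal{y^*}{Lx}$; adding these and cancelling the $\scal{y^*}{Lx}$ terms yields
$$f(x)+g(Lx)\ge -\big(f^*(-L^*y^*)+g^*(y^*)\big).$$
Taking the infimum over $x$ on the left and the supremum over $y^*$ on the right shows that the primal value dominates the negative of the dual value. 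It then remains to prove the reverse inequality together with attainment of the dual infimum, which is the substantive part.

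For strong duality I would introduce the perturbation (marginal) function $h(u):=\inf_{x\in\HH}\big(f(x)+g(Lx+u)\big)$, which is convex because it is the infimal projection onto the $u$-variable of the jointly convex map $(x,u)\mapsto f(x)+g(Lx+u)$. A direct computation of its conjugate, substituting $v=Lx+u$ to decouple the two suprema, gives
$$h^*(y^*)=f^*(-L^*y^*)+g^*(y^*),$$
so the dual objective is precisely $h^*$ while $h(0)$ is the primal value. The key analytic step is to invoke the constraint qualification: since $g$ is continuous at $Lx_0$ it is bounded above on a ball centred at $Lx_0$, and inserting the fixed feasible point $x=x_0$ into the infimum defining $h$ transfers this bound, showing that $h$ is bounded above on a neighbourhood of $0$. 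A convex function bounded above near a point is continuous there, so $h$ is finite and continuous at $0$ and hence $\partial h(0)\neq\varnothing$.

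Picking any $\bar y^*\in\partial h(0)$, the subgradient inequality $h(u)\ge h(0)+\scal{\bar y^*}{u}$ for all $u$ is equivalent to the Fenchel--Young equality $h(0)+h^*(\bar y^*)=0$, i.e. $h^*(\bar y^*)=-h(0)$; combined with weak duality this forces $h^*(\bar y^*)=\min_{y^*}h^*(y^*)=-h(0)$, which is exactly the claimed equality with the dual minimum attained at $\bar y^*$. For the optimality conditions I would then exploit the equality case of Fenchel--Young: if $\bar x$ is a primal minimizer and $\bar y^*$ a dual minimizer, the two Fenchel--Young inequalities of the first paragraph must both hold with equality, since their nonnegative slacks sum to the now-vanishing duality gap, and equality in them reads precisely $-L^*\bar y^*\in\partial f(\bar x)$ and $\bar y^*\in\partial g(L\bar x)$; conversely these two inclusions run the same computation backwards to certify that $\bar x$ attains the primal value. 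The main obstacle is the analytic heart of the third paragraph---converting the local boundedness of $g$ at $Lx_0$ into local boundedness, hence continuity and a nonempty subdifferential, of the marginal function $h$ at $0$---because this is the one place the constraint qualification is indispensable and where weak duality alone delivers neither the equality nor the attainment asserted by the $\min$.
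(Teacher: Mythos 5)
Your proposal is correct, but there is nothing in the paper to compare it against: the paper states this result as a Fact and cites Rockafellar--Wets \cite{Rock98} and Z\u{a}linescu \cite{Zalinescu} in lieu of a proof. Your argument is the canonical perturbation-function proof found in those references: weak duality from two Fenchel--Young inequalities; the marginal function $h(u)=\inf_{x}\big(f(x)+g(Lx+u)\big)$, convex as an infimal projection, with conjugate $h^*(y^*)=f^*(-L^*y^*)+g^*(y^*)$; the constraint qualification used to bound $h$ above near $0$, hence to make $h$ continuous at $0$ with $\partial h(0)\neq\varnothing$; any $\bar y^*\in\partial h(0)$ realizing the dual minimum with value $-h(0)$; and the equality case of Fenchel--Young giving the optimality system in both directions. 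The single point you pass over is the degenerate case $h(0)=-\infty$: continuity of a convex function bounded above near $0$ presupposes $h(0)>-\infty$, and if the primal value is $-\infty$ the dual objective is identically $+\infty$, so the displayed equality holds only in the extended-value sense and the minimum is not genuinely attained; like most textbook statements of this fact, one reads it with that convention. Note also that this degeneracy cannot intrude on the ``furthermore'' part: since $f$ and $g$ are proper, the existence of a minimizer $\bar{x}$ forces the primal value to be finite (and $x_{0}$ forces it to be $<+\infty$), so your appeal to dual attainment there is legitimate. In short, this is a convention to record, not a gap in your reasoning.
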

\begin{fact}[Attouch-Th\'era duality \cite{attouch}]\label{dualityat}
Let $A,B:\HH\To 2^{\HH}$ be maximal monotone operators. Let
$S$ be the solution set of the primal problem
$$\text{ find $x\in\HH$ such that } 0\in Ax+Bx.$$
Let $S^*$ be the solution set of the dual problem
\begin{equation}
\label{theradual1}
\text{ find $x^*\in\HH$ such that } 0\in A^{-1}x^*+\widetilde{B}(x^*).
\end{equation}
Then
\begin{enumerate}
\item $S=\menge{x\in \HH}{(\exists\ x^*\in S^*)\ x^*\in Ax \text{
and } -x^*\in Bx}$.
\item $S^*=\menge{x^*\in\HH}{(\exists\ x\in S)\ x\in A^{-1}x^*
\text{ and } -x\in \widetilde{B}(x^*)}.$
\end{enumerate}
Moreover, let $S^*_{1}$ be the solution to the dual problem given by
\begin{equation}\label{theradual2}
\text{ find $y^*\in\HH$ such that } 0\in \widetilde{A}(y^*)+B^{-1}(y^*).
\end{equation}
Then $S^*_{1}=-S^*$. Consequently, up to a `$-$' sign change in the dual
variable, the Attouch-Th\'era duals
\eqref{theradual1} and \eqref{theradual2}
have the same solutions.
\end{fact}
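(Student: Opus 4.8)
The plan is to trace both problems back to a single set of primal--dual pairs. Define
$$Z := \menge{(x,x^*)\in\HH\times\HH}{x^*\in Ax \text{ and } -x^*\in Bx}.$$
Everything will follow once I show that $S$ is the image of $Z$ under projection onto the first coordinate and that $S^*$ is its image under projection onto the second coordinate. Indeed, statements (i) and (ii) are then nothing but the two ways of reading membership ``$(x,x^*)\in Z$'' --- once with $x$ quantified and once with $x^*$ quantified --- while $S_1^*=-S^*$ will be a sign-reflection of the same set.

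First I would record how the operation $B\mapsto\widetilde B=(-\Id)\circ B^{-1}\circ(-\Id)$ acts at the level of graphs. Unwinding the definition coordinatewise gives
$$w\in\widetilde{B}(x^*)\iff -x^*\in B(-w),$$
and symmetrically for $\widetilde{A}$. This is the one place where the bookkeeping is delicate, so I would derive it by computing $\gr\widetilde B$ directly rather than by juggling inclusions on the fly; it is the only non-routine step, and once it is in hand the rest is mechanical.

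The primal side is immediate: $0\in Ax+Bx$ means there is an $x^*$ with $x^*\in Ax$ and $-x^*\in Bx$, i.e.\ $(x,x^*)\in Z$, so $S$ is exactly the first projection of $Z$. For the dual side, $0\in A^{-1}x^*+\widetilde{B}(x^*)$ means there is a $w$ with $w\in A^{-1}x^*$ (equivalently $x^*\in Aw$) and $-w\in\widetilde{B}(x^*)$; by the graph identity above the latter reads $-x^*\in Bw$. Hence $x^*\in S^*$ iff $(w,x^*)\in Z$ for some $w$, so $S^*$ is the second projection of $Z$, and in particular that $w$ lies in $S$. Reading ``$(x,x^*)\in Z$'' with $x^*$ ranging over $S^*$ yields (i); reading it with $x$ ranging over $S$ and translating $-x^*\in Bx$ back into $-x\in\widetilde{B}(x^*)$ via the same identity yields (ii).

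Finally, for $S_1^*=-S^*$ I would run the dual computation once more on $0\in\widetilde{A}(y^*)+B^{-1}(y^*)$: this asks for a $v$ with $y^*\in Bv$ and $-v\in\widetilde{A}(y^*)$, and the graph identity for $\widetilde A$ converts the latter into $-y^*\in Av$. Putting $x^*:=-y^*$ reproduces precisely the pair condition $(v,x^*)\in Z$, whence $y^*\in S_1^*\iff -y^*\in S^*$. Maximal monotonicity is never used beyond guaranteeing that the objects are well defined, so I expect no genuine obstacle here --- only the discipline of keeping the two reflections in $\widetilde A$ and $\widetilde B$ straight, which is exactly why I would isolate the graph identity as a standalone step before touching the inclusions.
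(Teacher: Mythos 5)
Your argument is correct, but there is nothing in the paper to compare it against: the statement is recorded as a \emph{Fact} with the citation \cite{attouch}, and the paper supplies no proof, deferring entirely to Attouch and Th\'era's original work. Taken on its own terms, your proof is a clean, self-contained verification. The bookkeeping identity $w\in\widetilde{B}(x^*)\iff -x^*\in B(-w)$ is exactly right (it follows from $\widetilde{B}(x^*)=-B^{-1}(-x^*)$), and once it is isolated, your set $Z=\menge{(x,x^*)}{x^*\in Ax \text{ and } -x^*\in Bx}$ of primal--dual pairs (what the splitting literature sometimes calls the extended solution set, or Kuhn--Tucker set) does all the work: $S$ and $S^*$ are its two coordinate projections, items (i) and (ii) are the two ways of quantifying membership in $Z$, and $S_1^*=-S^*$ is the sign reflection obtained by running the same computation on \eqref{theradual2} and substituting $x^*:=-y^*$. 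Your closing remark is also accurate and worth keeping: maximal monotonicity is never genuinely used --- every equivalence is pure set-valued graph algebra, valid for arbitrary operators $A,B$ --- which is consistent with the paper's own observation, made just before Fact~\ref{duality}, that the operator-level duality requires no constraint qualification, in contrast with the Fenchel--Rockafellar setting for functions.
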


The last result recorded in this section concerns 
basic properties of the resolvent average.

\begin{fact}[resolvent average]\label{knownlong}
Let $A_{1},A_{2}: \HH\To 2^{\HH}$ 
be maximal monotone operators, let $\lambda_1,\lambda_2,\gamma>0$ with
$\lambda_1+\lambda_2=1$, 
and set 
$$A:=\frac{(\lambda_{1}J_{\gamma A_{1}}+\lambda_{2}J_{\gamma A_{2}})^{-1}-\Id}{\gamma}.$$
Then
\begin{enumerate}
\item $J_{\gamma A}=\lambda_{1}J_{\gamma A_{1}}+\lambda_{2}J_{\gamma A_{2}}$ and
$\Yosida{\gamma}{A}=\lambda_{1}\Yosida{\gamma}{A_{1}}+\lambda_{2}
\Yosida{\gamma}{A_{2}}.$
\item $A$ is maximal monotone.
\end{enumerate}
\end{fact}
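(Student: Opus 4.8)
The plan is to establish the two identities in (i) first, since (ii) will follow readily once the resolvent formula is in hand. For (i), I would begin by unpacking the definition of $A$. Since each $A_i$ is maximal monotone, Fact~\ref{elementarym}(iii) guarantees that $J_{\gamma A_1}$ and $J_{\gamma A_2}$ have full domain, so the convex combination $T:=\lambda_1 J_{\gamma A_1}+\lambda_2 J_{\gamma A_2}$ is a well-defined mapping on all of $\HH$. The first task is to show that $T$ is invertible as a single-valued map on its range, so that $T^{-1}$ makes sense and $A=(T^{-1}-\Id)/\gamma$ is a genuine operator. The key point is that each $J_{\gamma A_i}$ is firmly nonexpansive by Fact~\ref{elementarym}(ii); a convex combination of firmly nonexpansive maps is again firmly nonexpansive, hence in particular injective. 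Once $T^{-1}$ is available, the resolvent identity is a direct computation: from $A=(T^{-1}-\Id)/\gamma$ one gets $\gamma A=T^{-1}-\Id$, so $\gamma A+\Id=T^{-1}$, whence $J_{\gamma A}=(\gamma A+\Id)^{-1}=(T^{-1})^{-1}=T=\lambda_1 J_{\gamma A_1}+\lambda_2 J_{\gamma A_2}$, which is exactly the first claimed identity.

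The second identity in (i), concerning the Yosida approximations, should then follow by a routine substitution. Using the definition $\Yosida{\gamma}{A}=(\Id-J_{\gamma A})/\gamma$ from \eqref{regular}, I would write
$$\Yosida{\gamma}{A}=\frac{\Id-(\lambda_1 J_{\gamma A_1}+\lambda_2 J_{\gamma A_2})}{\gamma}
=\frac{(\lambda_1+\lambda_2)\Id-\lambda_1 J_{\gamma A_1}-\lambda_2 J_{\gamma A_2}}{\gamma},$$
where I have used $\lambda_1+\lambda_2=1$ to split the identity. Regrouping gives
$$\Yosida{\gamma}{A}=\lambda_1\frac{\Id-J_{\gamma A_1}}{\gamma}+\lambda_2\frac{\Id-J_{\gamma A_2}}{\gamma}
=\lambda_1\Yosida{\gamma}{A_1}+\lambda_2\Yosida{\gamma}{A_2},$$
as required.

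For (ii), the cleanest route is to deduce maximal monotonicity of $A$ from the resolvent formula just proved, via Fact~\ref{elementarym}(iii): it suffices to show that $J_{\gamma A}$ is firmly nonexpansive with full domain, since then $\gamma A$ is maximal monotone and hence so is $A$. Full domain is immediate because $J_{\gamma A}=\lambda_1 J_{\gamma A_1}+\lambda_2 J_{\gamma A_2}$ and each summand has full domain. Firm nonexpansiveness of the convex combination follows from the elementary fact that the set of firmly nonexpansive mappings on $\HH$ is convex; this is the same observation used in the injectivity argument above, so no new work is needed. Finally, applying Fact~\ref{elementarym}(iii) in the reverse direction — a full-domain firmly nonexpansive map is the resolvent of a maximal monotone operator — yields that $\gamma A$, and therefore $A$, is maximal monotone.

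The main obstacle I anticipate is the well-definedness of $A$ itself, specifically verifying that $T=\lambda_1 J_{\gamma A_1}+\lambda_2 J_{\gamma A_2}$ is injective so that $T^{-1}$ is single-valued and the formula $A=(T^{-1}-\Id)/\gamma$ defines an honest operator rather than a set-valued relation. This hinges entirely on the convexity of the class of firmly nonexpansive mappings, so I would isolate and prove that lemma carefully — namely that if $T_1,T_2$ are firmly nonexpansive and $\lambda_1+\lambda_2=1$ with $\lambda_i>0$, then $\lambda_1 T_1+\lambda_2 T_2$ is firmly nonexpansive — as it is the single structural input driving both parts of the argument.
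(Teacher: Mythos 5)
Your plan follows the paper's route in its overall structure, but it contains one genuinely false step, and it is precisely the step you single out as ``the key point'' and ``the main obstacle'': firm nonexpansiveness does \emph{not} imply injectivity. If $C\subsetneq\HH$ is any nonempty closed convex proper subset, then $P_C$ is firmly nonexpansive but not injective (for $x\notin C$, the distinct points $x$ and $P_Cx$ have the same image); constant maps give an even simpler counterexample. The failure occurs in the simplest instance of this very construction: with $A_1=A_2=N_C$ one gets $T=\lambda_1 P_C+\lambda_2 P_C=P_C$, which is not injective, and $A=\big(P_C^{-1}-\Id\big)/\gamma=N_C$ is genuinely multi-valued. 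So if your argument really required $T$ to be injective and $A$ to be ``an honest operator rather than a set-valued relation,'' it would collapse on this example.

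Fortunately, nothing in the proof needs that claim. In this paper's framework all operators are set-valued maps $\HH\To 2^{\HH}$, and $T^{-1}$ is the \emph{graph} inverse, with $\gr T^{-1}=\menge{(u,x)}{u=Tx}$, which always exists; $A$ is allowed to be (and in general is) multi-valued. The identity you want, $J_{\gamma A}=(\Id+\gamma A)^{-1}=(T^{-1})^{-1}=T$, holds at the level of graphs because the graph inverse is an involution, and single-valuedness and full domain of $J_{\gamma A}$ are then inherited from $T$ \emph{after} this computation, not assumed before it. Once the injectivity claim is deleted, what remains is essentially the paper's own proof: part (i) is a purely definitional computation (including your Yosida regrouping via $\lambda_1+\lambda_2=1$), and part (ii) runs exactly as you and the paper both say --- a convex combination of firmly nonexpansive maps with full domain is firmly nonexpansive with full domain (the lemma you rightly propose to isolate, e.g.\ via $T_i=\tfrac12(\Id+N_i)$ with $N_i$ nonexpansive), hence it is the resolvent of a maximal monotone operator by Fact~\ref{elementarym}(ii)--(iii), so $\gamma A$ and therefore $A$ is maximal monotone.
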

\begin{proof} (i) follows from the definitions involved. (ii): By Fact~\ref{elementarym}(iii)
and maximal monotonicity
of $A_{i}$, $J_{\gamma A_{i}}$ is firmly nonexpansive and has a full domain so that $J_{\gamma A}$ is firmly nonexpansive and has a full domain.
Then by Fact~\ref{elementarym}(iii)
again $\gamma A$ is maximal monotone, so is $A$.
\end{proof}

\section{Fixed points of resolvent average and compositions}\label{resolvcomp}

In this Section, we assume that
$A_{1},A_{2}:\HH\To 2^{\HH}$ are maximal monotone operators, and that
$\lambda_{1}+\lambda_{2}=1$ where each $\lambda_{i}>0$.

\subsection{Inclusion problem formulations and their 
common Attouch-Th\'era dual}

Consider the inclusion problems
\begin{align}
&\text{($R_{\gamma}$)\quad find $z$ such that}\quad  0\in
\Big(\big(\lambda_{1}J_{\gamma A_{1}}+\lambda_{2}J_{\gamma
A_{2}}\big)^{-1}-\Id \Big)(z);\label{sunday-1}\\[+ 2mm]
& \text{($P_{\gamma}$)\quad find $z$ such that}\quad  0= \big(\lambda_{1}
\Yosida{\gamma}{A_{1}}
+\lambda_{2}\Yosida{\gamma}{A_{2}}\big)(z);\label{sunday0}\\[+ 2mm]
& \text{($P$)\quad find $(x,y)$ such that}\quad  (0,0)\in \bigg(\frac{(\Id
-R)}{\gamma}+\Big(\frac{A_{1}}{\lambda_{2}}\times
\frac{A_{2}}{\lambda_{1}}\Big)\bigg)(x,y);\label{sunday1}\\[+2 mm]
&\text{find $x$ such that}\quad 0\in \bigg(\frac{A_{1}}{\lambda_{2}}+
\Yosida{\gamma}{\Big(\frac{A_{2}}{\lambda_{1}}\Big)}\bigg)(x);\label{sunday2}
\\[+2 mm]
&\text{find $y$ such that} \quad
0\in\bigg(\Yosida{\gamma}{\Big(\frac{A_{1}}{\lambda_{2}}\Big)}+\frac{A_{2}}{\lambda_{1}}\bigg)(y).\label{sunday3}
\end{align}

\begin{theorem}
Problems~\eqref{sunday-1}--\eqref{sunday3} are equivalent
in the sense that
if one of the problems possesses a solution, then so do all the others.
\end{theorem}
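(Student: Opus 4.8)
The plan is to split the five problems into two groups, verify each group's internal equivalence from the facts already recorded, and then build one explicit bridge connecting the groups. The first group is $\{\eqref{sunday-1},\eqref{sunday0}\}$, and I claim these share the \emph{same} solution set. Writing $A$ for the resolvent average as in Fact~\ref{knownlong}, the inclusion \eqref{sunday-1} reads $0\in\gamma A(z)$, so its solution set is $A^{-1}(0)$. Since the single-valued operator $\lambda_{1}\Yosida{\gamma}{A_{1}}+\lambda_{2}\Yosida{\gamma}{A_{2}}$ equals $\Yosida{\gamma}{A}$ by Fact~\ref{knownlong}(i), problem \eqref{sunday0} asks for $z\in(\Yosida{\gamma}{A})^{-1}(0)$, which by Fact~\ref{elementarym}(i) is again $A^{-1}(0)=\Fix(\lambda_{1}J_{\gamma A_{1}}+\lambda_{2}J_{\gamma A_{2}})$. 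Hence \eqref{sunday-1} and \eqref{sunday0} are solvable exactly together.

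The second group is $\{\eqref{sunday1},\eqref{sunday2},\eqref{sunday3}\}$, and here the key move is a change of operators: I would set $A:=A_{1}/\lambda_{2}$ and $B:=A_{2}/\lambda_{1}$, both maximal monotone since positive scaling preserves maximal monotonicity. After multiplying \eqref{sunday1} through by the positive scalar $\gamma$, its defining inclusion becomes $(0,0)\in(\Id-R+\gamma(A\times B))(x,y)$, i.e.\ $(x,y)\in S$ in the notation of Fact~\ref{key1}; likewise \eqref{sunday2} becomes $x\in E=(A+\yosida{\gamma}B)^{-1}(0)$ and \eqref{sunday3} becomes $y\in F=(B+\yosida{\gamma}A)^{-1}(0)$. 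Fact~\ref{key1}, whose leading equivalences assert $S\neq\emp\Leftrightarrow E\neq\emp\Leftrightarrow F\neq\emp$, then immediately shows that \eqref{sunday1}, \eqref{sunday2}, \eqref{sunday3} are solvable exactly together.

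It remains to bridge the two groups, and this is the heart of the matter; I would prove \eqref{sunday0}$\Leftrightarrow$\eqref{sunday1} by an explicit correspondence. Given a solution $z$ of \eqref{sunday0}, set $x:=J_{\gamma A_{1}}(z)$ and $y:=J_{\gamma A_{2}}(z)$. The fixed-point identity $z=\lambda_{1}x+\lambda_{2}y$ gives the crucial cross-terms $z-x=\lambda_{2}(y-x)$ and $z-y=\lambda_{1}(x-y)$; feeding these into the resolvent relations $z-x\in\gamma A_{1}(x)$ and $z-y\in\gamma A_{2}(y)$ and dividing out $\lambda_{2}$ and $\lambda_{1}$ respectively yields exactly $(y-x)/\gamma\in(A_{1}/\lambda_{2})(x)$ and $(x-y)/\gamma\in(A_{2}/\lambda_{1})(y)$, which is precisely $(x,y)\in S$, i.e.\ $(x,y)$ solves \eqref{sunday1}. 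Conversely, given a solution $(x,y)$ of \eqref{sunday1}, I would set $z:=\lambda_{1}x+\lambda_{2}y$ and run the same computation backwards to check $z-x\in\gamma A_{1}(x)$ and $z-y\in\gamma A_{2}(y)$, whence $x=J_{\gamma A_{1}}(z)$, $y=J_{\gamma A_{2}}(z)$, and therefore $z=\lambda_{1}J_{\gamma A_{1}}(z)+\lambda_{2}J_{\gamma A_{2}}(z)$ solves \eqref{sunday0}.

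Chaining the three parts, \eqref{sunday-1}$\Leftrightarrow$\eqref{sunday0}$\Leftrightarrow$\eqref{sunday1}$\Leftrightarrow$\eqref{sunday2}$\Leftrightarrow$\eqref{sunday3}, closes the equivalence. The main obstacle is the bridge step: one must hit upon the right pairing $z\mapsto(J_{\gamma A_{1}}z,J_{\gamma A_{2}}z)$ and, above all, recognize that it is precisely the scaled operators $A_{1}/\lambda_{2}$ and $A_{2}/\lambda_{1}$ that let the cross-terms $z-x=\lambda_{2}(y-x)$ and $z-y=\lambda_{1}(x-y)$ convert the two separate resolvent inclusions of \eqref{sunday0} into the coupled composition-type system of \eqref{sunday1}. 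Once this identification is made, everything else is routine bookkeeping together with direct appeals to Facts~\ref{elementarym}, \ref{knownlong}, and \ref{key1}.
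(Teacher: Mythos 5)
Your proof is correct, and its central step --- the bridge between the averaged problems and the coupled problem \eqref{sunday1} via $z\mapsto(J_{\gamma A_{1}}z,\,J_{\gamma A_{2}}z)$ and $(x,y)\mapsto\lambda_{1}x+\lambda_{2}y$, driven by the identities $z-x=\lambda_{2}(y-x)$ and $z-y=\lambda_{1}(x-y)$ --- is exactly the computation in the paper's proof of \eqref{sunday-1}$\Leftrightarrow$\eqref{sunday1}; that you anchor it at \eqref{sunday0} rather than \eqref{sunday-1} is immaterial, since both you and the paper show these two problems have identical solution sets. Where you genuinely depart from the paper is in the equivalences among \eqref{sunday1}, \eqref{sunday2} and \eqref{sunday3}: the paper proves \eqref{sunday1}$\Leftrightarrow$\eqref{sunday2} and \eqref{sunday1}$\Leftrightarrow$\eqref{sunday3} by hand, eliminating one variable through $y=J_{\gamma A_{2}/\lambda_{1}}(x)$ and recognizing $(x-J_{\gamma A_{2}/\lambda_{1}}x)/\gamma$ as a Yosida approximation, whereas you observe that with $A:=A_{1}/\lambda_{2}$, $B:=A_{2}/\lambda_{1}$ (maximal monotone under positive scaling) and after multiplying \eqref{sunday1} by $\gamma$, the three problems are precisely the sets $S$, $E$, $F$ of Fact~\ref{key1}, whose leading chain of equivalences settles the matter at once. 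Your route is shorter and anticipates the identification the paper itself exploits later (Theorem~\ref{evening} applies Fact~\ref{key1} with exactly these scaled operators and $\gamma=1$); the paper's elementary substitution buys self-containedness and, as a by-product, exhibits the explicit relation $y=J_{\gamma A_{2}/\lambda_{1}}(x)$ linking solutions of \eqref{sunday2} to solutions of \eqref{sunday1}, which resurfaces in the subsequent structural theorems. Both arguments are complete.
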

\begin{proof}
\eqref{sunday-1}$\Leftrightarrow$\eqref{sunday0}:
$z$ solves \eqref{sunday-1} if and only if $z$ solves
$0\in A(z)$ where
$$A=\frac{\big(\lambda_{1}J_{\gamma A_{1}}+\lambda_{2}J_{\gamma A_{2}}\big)^{-1}-\Id}{\gamma}.$$
It suffices to apply Fact~\ref{elementarym}(i) and Fact~\ref{knownlong}(i) to $A$.

\eqref{sunday-1}$\Leftrightarrow$\eqref{sunday1}: Note that
$z$ solves \eqref{sunday-1} if and only if
$z=\lambda_{1}J_{\gamma A_{1}}(z)+\lambda_{2}J_{\gamma A_{2}}(z)$.
Let $J_{\gamma A_{1}}(z)=x$, $J_{\gamma A_{2}}(z)=y$. We have
$z$ solves \eqref{sunday-1} if and only if
\begin{equation}\label{clearform}
\begin{cases}
& z=\lambda_{1}x+\lambda_{2}y\\
& x=J_{\gamma A_{1}}(z)\\
& y=J_{\gamma A_{2}}(z).
\end{cases}
\end{equation}
We claim that $(x,y)$ solves \eqref{sunday1}. Indeed,
\eqref{clearform} gives $z\in \gamma A_{1}(x)+x$, $z\in \gamma A_{2}(y)+y$, i.e.,
\begin{align}\label{splittedform}
0 &\in \gamma A_{1}(x)+(x-z)\\
0 & \in \gamma A_{2}(y)+(y-z).
\end{align}
As $z=\lambda_{1}x+\lambda_2 y$, we have $x-z=\lambda_{2}(x-y)$, $y-z=\lambda_{1}(y-x)$. Therefore,
\eqref{splittedform} gives
\begin{align}
0 &\in \gamma A_{1}(x)+\lambda_{2}(x-y)\\
0 & \in \gamma A_{2}(y)+\lambda_{1}(y-x),
\end{align}
equivalently,
\begin{align}
0 &\in \frac{\gamma A_{1}(x)}{\lambda_{2}}+(x-y)\\
0 & \in \frac{\gamma A_{2}(y)}{\lambda_{1}}+(y-x).
\end{align}
In the product space setting,
\begin{equation}
(0,0)\in \bigg((\Id-R)(x,y)+\frac{\gamma A_{1}(x)}{\lambda_{2}}\times \frac{\gamma A_{2}(y)}{\lambda_{1}}\bigg).
\end{equation}
 Dividing  both sides by $\gamma$ gives
\begin{equation}\label{productform}
(0,0)\in \bigg(\frac{(\Id-R)}{\gamma}(x,y)+\frac{ A_{1}(x)}{\lambda_{2}}\times \frac{ A_{2}(y)}{\lambda_{1}}\bigg),
\end{equation}
as required.
Conversely, let $(x,y)$ solves \eqref{sunday1}. Put $z=\lambda_{1}x+\lambda_{2}y$. Exactly reverse
the arguments from \eqref{productform} to \eqref{splittedform} to get \eqref{clearform}. Hence $z$ solves \eqref{sunday-1}.

\eqref{sunday1}$\Leftrightarrow$\eqref{sunday2}: $(x,y)$ solves \eqref{sunday1} if and only if
\begin{align}
0 &\in \frac{\gamma A_{1}(x)}{\lambda_{2}}+(x-y)\label{thisone}\\
0 & \in \frac{\gamma A_{2}(y)}{\lambda_{1}}+(y-x).\label{thistwo}
\end{align}
From \eqref{thistwo},
$y=J_{\gamma A_{2}/\lambda_{1}}(x)$. Put this in \eqref{thisone} to get
\begin{equation}\label{thisthree}
0 \in \frac{\gamma A_{1}(x)}{\lambda_{2}}+x-J_{\gamma A_{2}/\lambda_{1}}(x).
\end{equation}
Dividing both sides by $\gamma$ gives
$$0\in \frac{A_{1}(x)}{\lambda_{2}}+\frac{x-J_{\gamma A_{2}/\lambda_{1}}(x)}{\gamma}
=\frac{A_{1}(x)}{\lambda_{2}}+\Yosida{\gamma}{\Big(\frac{A_{2}}{\lambda_{1}}\Big)}(x),$$
which says that $x$ solves \eqref{sunday2}.
Conversely, $x$ solves  \eqref{sunday2} if and only if \eqref{thisthree} holds.
Put $y=J_{\gamma A_{2}/\lambda_{1}}(x)$. Then $x\in \gamma A_{2}(y)/\lambda_{1}+y$, and \eqref{thisthree}
gives
$0 \in \frac{\gamma A_{1}(x)}{\lambda_{2}}+x-y.$ Hence $(x,y)$ satisfies \eqref{thisone} and \eqref{thistwo}.

As in \eqref{sunday1}$\Leftrightarrow$\eqref{sunday2},  similarly one can show \eqref{sunday1}$\Leftrightarrow$\eqref{sunday3}.
\end{proof}

We proceed to show that all of them share one common Attouch-Th\'era dual problem.

\begin{theorem}\label{operator}
 Up to a `$-$' sign change of variable, the following inclusion problems
 have the same Attouch-Th\'era dual solution.
\begin{enumerate}
\item $\text{($P_{\gamma}$)\quad find $z$ such that}\quad
0\in \left(\frac{\Yosida{\gamma}{A_{1}}}{\displaystyle\lambda_{2}}
+\frac{\Yosida{\gamma}{A_{2}}}{\displaystyle \lambda_{1}}\right)(z);$
\item $\text{(P)\quad find $(x,y)$ such that}\quad
(0,0)\in \left(\frac{(\Id -R)}{\gamma}+\big(\frac{A_{1}}{\lambda_{2}}\times
\frac{A_{2}}{\lambda_{1}}\big)\right)(x,y);$
\item $\text{find $x$ such that}\quad
0\in \left(\frac{A_{1}}{\lambda_{2}}+
\Yosida{\gamma}{\big(\tfrac{A_{2}}{\lambda_{1}}\big)}\right)(x);$
\item $\text{find $y$ such that} \quad
0\in\left(\Yosida{\gamma}{\big(\tfrac{A_{1}}{\lambda_{2}}\big)}+
\frac{A_{2}}{\lambda_{1}}\right)(y).$
\end{enumerate}
Namely, up to a `$-$' sign change in the dual variable, their
Attouch-Th\'era dual has the form
\begin{equation}
\text{ find $z^*$ such that } \quad  0\in\gamma z^*+(A_{1}/\lambda_{2})^{-1}(z^*)+\widetilde{A_{2}/\lambda_{1}}(z^*).
\end{equation}
Moreover, the set of solutions is either empty or a singleton.
\end{theorem}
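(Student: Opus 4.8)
The plan is to treat the four inclusion problems one at a time: for each I would write it as $0\in A\zeta+B\zeta$, read off its Attouch-Th\'era dual $0\in A^{-1}(\zeta^*)+\widetilde{B}(\zeta^*)$ from Fact~\ref{dualityat}, and then simplify until it collapses onto the single stated form. The entire simplification rests on three elementary identities: the Yosida inverse formula $(\Yosida{\gamma}{C})^{-1}=\gamma\Id+C^{-1}$ from \eqref{regular}; the positive-scaling rule $(C/\mu)^{-1}=C^{-1}(\mu\,\cdot\,)$ for $\mu>0$; and $\widetilde{C}(\zeta^*)=-C^{-1}(-\zeta^*)$, which in particular yields $(A_1/\lambda_2)^{-1}(z^*)=A_1^{-1}(\lambda_2 z^*)$ and $\widetilde{A_2/\lambda_1}(z^*)=-A_2^{-1}(-\lambda_1 z^*)$.

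Problems (iii) and (iv) are the cleanest. For (iii) take $A=A_1/\lambda_2$ and $B=\Yosida{\gamma}{(A_2/\lambda_1)}$; since $B^{-1}=\gamma\Id+(A_2/\lambda_1)^{-1}$, a one-line computation gives $\widetilde{B}(z^*)=\gamma z^*+\widetilde{A_2/\lambda_1}(z^*)$, so the dual is exactly $0\in\gamma z^*+(A_1/\lambda_2)^{-1}(z^*)+\widetilde{A_2/\lambda_1}(z^*)$, and (iv) gives the identical expression with the roles interchanged. Problem (i) is the one that looks least obvious, because of the crossed weights $1/\lambda_2,1/\lambda_1$: taking $A=\Yosida{\gamma}{A_1}/\lambda_2$ and $B=\Yosida{\gamma}{A_2}/\lambda_1$ and applying the scaling rule with \eqref{regular} produces a $\gamma\lambda_2 z^*$ term from $A^{-1}$ and a $\gamma\lambda_1 z^*$ term from $\widetilde{B}$; these combine, via $\lambda_1+\lambda_2=1$, into a single $\gamma z^*$, while the surviving terms are precisely $A_1^{-1}(\lambda_2 z^*)$ and $-A_2^{-1}(-\lambda_1 z^*)$. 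Thus (i) also collapses onto the stated dual.

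For the product-space problem (ii) I would not compute the dual by hand but invoke Fact~\ref{key1} with $A:=A_1/\lambda_2$ and $B:=A_2/\lambda_1$: multiplying the inclusion in (ii) by $\gamma$ turns its solution set into the set $S$ of that fact, whose dual object satisfies $S^*=\{(\gamma u^*,\gamma v^*)\}$ with $u^*=-v^*$ and $u^*=J_{(A^{-1}+\widetilde{B})/\gamma}(0)$. By the definition of the resolvent, $u^*$ is characterized by $0\in\gamma u^*+(A_1/\lambda_2)^{-1}(u^*)+\widetilde{A_2/\lambda_1}(u^*)$, i.e.\ $u^*$ solves the same single-space dual. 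Hence the product-space dual of (ii) is carried entirely by the scalar $u^*$, its two coordinates differing only through $v^*=-u^*$; this is exactly the reconciliation that accounts for the phrase ``up to a sign change of variable'' and shows (ii) shares the common dual.

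Finally, for the ``empty or singleton'' claim I would argue by strong monotonicity. The dual operator $M:=\gamma\Id+(A_1/\lambda_2)^{-1}+\widetilde{A_2/\lambda_1}$ is the sum of the monotone operator $(A_1/\lambda_2)^{-1}+\widetilde{A_2/\lambda_1}$ and the $\gamma$-strongly monotone operator $\gamma\Id$, hence is strongly monotone, and a strongly monotone operator has at most one zero; equivalently, every solution equals $J_{(A^{-1}+\widetilde{B})/\gamma}(0)$, which is a single value since resolvents of monotone operators are single-valued (Fact~\ref{elementarym}(ii)). I expect the only genuinely delicate point to be the bookkeeping for (ii): one must track which operator plays the role of $A$ and which of $B$, and check that the reduction of the product-space dual $S^*$ to the scalar $u^*$ is consistent—in both sign and the factor $\gamma$—with the duals computed for (i), (iii), (iv). The single-space problems themselves are routine once the three inverse identities are in hand.
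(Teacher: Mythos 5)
Your proof is correct, and for the single-space problems it follows essentially the paper's own computation: for (i) you expand $(\Yosida{\gamma}{A_{1}}/\lambda_{2})^{-1}$ and $\widetilde{\Yosida{\gamma}{A_{2}}/\lambda_{1}}$ with the same scaling and Yosida-inverse identities and merge $\gamma\lambda_{2}\Id+\gamma\lambda_{1}\Id=\gamma\Id$, and your one-line treatment of (iii) is verbatim the paper's. The genuine differences are at (iv), (ii), and the uniqueness claim. At (iv) the paper writes the dual in the second Attouch--Th\'era form $0\in\widetilde{\yosida{\gamma}\big(A_{1}/\lambda_{2}\big)}(y^*)+(A_{2}/\lambda_{1})^{-1}(y^*)$ and must then multiply by $-1$ and substitute $z^*=-y^*$; you use the first form, which lands on the stated inclusion with no sign change at all --- cleaner, and legitimate since Fact~\ref{dualityat} says the two dual forms differ only by the sign of the dual variable. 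At (ii) the paper computes the product-space dual by hand: $\ran(\Id-R)=\menge{(d,-d)}{d\in\HH}$ forces $y^*=-x^*$, and eliminating the auxiliary variable $y$ yields the stated inclusion for $x^*$; you instead quote Fact~\ref{key1}(iii) together with the resolvent characterization of $u^*$. That works, but note the trade: Fact~\ref{key1} merely \emph{defines} the set $S^*$, so identifying it --- up to the sign and the factor $\gamma$ you yourself flag --- with the Attouch--Th\'era dual of (ii) as decomposed in the theorem is essentially the same bookkeeping the paper carries out explicitly; your route compresses that computation rather than avoids it. Finally, on the ``empty or singleton'' claim the paper's proof is actually silent, whereas your argument --- $\gamma\Id$ plus the monotone operator $(A_{1}/\lambda_{2})^{-1}+\widetilde{A_{2}/\lambda_{1}}$ is strongly monotone and so has at most one zero, equivalently every solution equals the single value $J_{((A_{1}/\lambda_{2})^{-1}+\widetilde{A_{2}/\lambda_{1}})/\gamma}(0)$ --- settles it cleanly; on this point your write-up is more complete than the paper's.
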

\begin{proof}
By \eqref{regular}, we have
 $$\widetilde{\Yosida{\gamma}{A}}=
-(\Yosida{\gamma}{A})^{-1}(-\Id)=\gamma\Id +\widetilde{A},$$
$$\widetilde{A/\lambda}=\widetilde{A}(\lambda\Id).$$
(i). The Attouch-Th\'era dual is:
$$0 \in \left[(\Yosida{\gamma}{A_{1}}/\lambda_{2})^{-1} +
\widetilde{\Yosida{\gamma}{A_{2}}/\lambda_{1}}\right](z^*).$$
We have \begin{align*}
\big(\Yosida{\gamma}{A_{1}}/\lambda_{2}\big)^{-1} +
\widetilde{\Yosida{\gamma}{A_{2}}/\lambda_{1}}
&= (\Yosida{\gamma}{A_{1}})^{-1}(\lambda_{2}\Id)
+\widetilde{\Yosida{\gamma}{A_{2}}}(\lambda_{1}\Id)\\
&=(\gamma\Id+A_{1}^{-1})(\lambda_{2}\Id)+(\gamma\Id+\widetilde{A_{2}})(\lambda_{1}\Id)\\
&=\gamma\Id+A_{1}^{-1}(\lambda_{2}\Id)+\widetilde{A_{2}}(\lambda_{1}\Id)\\
&=\gamma\Id +(A_{1}/\lambda_{2})^{-1}+\widetilde{A_{2}/\lambda_{1}}.
\end{align*}
Hence the dual is
\begin{equation}\label{lastday}
0\in [\gamma\Id +(A_{1}/\lambda_{2})^{-1}+\widetilde{A_{2}/\lambda_{1}}](z^*).
\end{equation}

(ii). The Attouch-Th\'era dual is
\begin{equation}\label{fun}
(0,0)\in \bigg[\gamma(\Id-R)^{-1}+\bigg(\frac{A_{1}}{\lambda_{2}}
\times\frac{A_{2}}{\lambda_{1}}\bigg)^{-1}\bigg](x^*,y^*).\end{equation}
Since $\ran (\Id-R)=\menge{(d,-d)}{d\in \HH},$
we have $y^*=-x^*$. \eqref{fun} reduces to find $x^*$ such that
$$(0,0)\in \bigg[\gamma(\Id-R)^{-1}+\bigg(\frac{A_{1}}{\lambda_{2}}
\times\frac{A_{2}}{\lambda_{1}}\bigg)^{-1}\bigg](x^*,-x^*).$$
Then $x^*$ solves the dual if and only if
there exists $y\in \HH$ such that
$$(0,0)\in \gamma (y+x^*,y)+\bigg(\frac{A_{1}}{\lambda_{2}}\bigg)^{-1}(x^*)
\times\bigg(\frac{A_{2}}{\lambda_{1}}\bigg)^{-1}(-x^*),
$$
which transpires to
$$0\in \gamma (y+x^*)+\bigg(\frac{A_{1}}{\lambda_{2}}\bigg)^{-1}(x^*),
\quad 0\in \gamma y+\bigg(\frac{A_{2}}{\lambda_{1}}\bigg)^{-1}(-x^*).$$
This is equivalent to find $x^*$ such that
$$0\in \gamma x^*+\bigg(\frac{A_{1}}{\lambda_{2}}\bigg)^{-1}(x^*)-\bigg(\frac{A_{2}}{\lambda_{1}}\bigg)^{-1}(-x^*)
=\left[\gamma \Id+
(A_{1}/\lambda_{2})^{-1}+\widetilde{A_{2}/\lambda_{1}}\right](x^*).$$

(iii). The Attouch-Th\'era dual is
$$0\in \bigg(\frac{A_{1}}{\lambda_{2}}\bigg)^{-1}(x^*)+
\widetilde{\yosida{\gamma}\bigg(\frac{A_{2}}{\lambda_{1}}\bigg)}(x^*).
$$
The right-hand side becomes
\begin{align*}
&\bigg(\frac{A_{1}}{\lambda_{2}}\bigg)^{-1}+
\gamma\Id + \widetilde{\bigg(\frac{A_{2}}{\lambda_{1}}\bigg)}=\gamma\Id +(A_{1}/\lambda_{2})^{-1}+\widetilde{A_{2}/\lambda_{1}}
\end{align*}
Hence the dual is
$$0\in \left[\gamma\Id
+(A_{1}/\lambda_{2})^{-1}+\widetilde{A_{2}/\lambda_{1}}\right](x^*).
$$
(iv). The Attouch-Th\'era dual is
$$0\in \widetilde{\yosida{\gamma}\big(A_{1}/\lambda_{2}\big)}(y^*)
+(A_{2}/\lambda_{1})^{-1}(y^*).$$
We have
\begin{align*}
\widetilde{\yosida{\gamma}\big(A_{1}/\lambda_{2}\big)}
+(A_{2}/\lambda_{1})^{-1}
=\gamma\Id +\widetilde{A_{1}/\lambda_{2}}+(A_{2}/\lambda_{1})^{-1}.
\end{align*}
Then the dual becomes
$$0\in \gamma y^*+\widetilde{A_{1}/\lambda_{2}}(y^*)+(A_{2}/\lambda_{1})^{-1}(y^*),$$
that is,
$$0\in \gamma y^*-(A_{1}/\lambda_{2})^{-1}(-y^*)+(A_{2}/\lambda_{1})^{-1}(y^*).$$
Multiplying both sides by $-1$, followed by making the substitution
$z^*=-y^*$, we obtain
$$0\in\gamma z^*+(A_{1}/\lambda_{2})^{-1}(z^*)+
\widetilde{A_{2}/\lambda_{1}}(z^*).$$
The proof is complete.
\end{proof}

\subsection{Characterization of solution sets}

Problem~\eqref{sunday1} has its Attouch-Th\'era dual given by
\begin{equation}
\text{(D)\quad find $(x^*,y^*)$ such that} \quad
(0,0)\in \bigg(\left(\frac{\Id-R}{\gamma}\right)^{-1}+
\left(\frac{A_{1}}{\lambda_{2}}
\times\frac{A_{2}}{\lambda_{1}}\right)^{-1}\bigg)(x^*,y^*).
\end{equation}
The following result gives a fixed point characterization to the solution sets of \eqref{sunday-1}--\eqref{sunday3} when $\gamma =1$.
\begin{theorem}\label{evening}
The following assertions hold.
\begin{enumerate}
\item \label{set1} \emph{({\bf Fixed points of resolvent average})} Let $A=(\lambda_{1}J_{A_{1}}+\lambda_{2}J_{A_{2}})^{-1}-\Id$. Then
$$\Fix J_{A}=\big(\lambda_{1}\Yosida{1}{A_{1}} +
\lambda_{2}\Yosida{1}{A_{2}}\big)^{-1}(0)
=\menge{z\in\HH}{z=J_{A}(z)=\lambda_{1}J_{A_{1}}(z)+\lambda_{2}J_{A_{2}}(z)}.$$
\item \label{set2}\emph{({\bf Fixed points of compositions})}
Set
$E:=\Big(\frac{A_{1}}{\lambda_{2}}+
\Yosida{1}{\big(\tfrac{A_{2}}{\lambda_{1}}\big)}\Big)^{-1}(0)$.
Then $E =\Fix J_{A_{1}/\lambda_{2}}J_{A_{2}/\lambda_{1}}
=J_{A_{1}/\lambda_{2}}(F).$
\item \label{set3}\emph{({\bf Fixed points of compositions})} Set
$F:=\Big(\Yosida{1}{\big(\tfrac{A_{1}}{\lambda_{2}}\big)}+\frac{A_{2}}{\lambda_{1}}\Big)^{-1}(0)$. Then $F
    =\Fix J_{A_2/\lambda_{1}}J_{A_{1}/\lambda_{2}}=J_{A_{2}/\lambda_{1}}(E)$.
\item \label{set4}\emph{({\bf Fixed points of alternating resolvents})}
Set
$S:=\left((\Id -R)+\big(\frac{A_{1}}{\lambda_{2}}\times
\frac{A_{2}}{\lambda_{1}}\big)\right)^{-1}(0,0)$. Then
$$
S=\{(x,y)| x=J_{A_{1}/\lambda_{2}}y, y=J_{A_{2}/\lambda_{1}}x\}=\Fix \big(J_{A_{1}/\lambda_{2}\times A_{2}/\lambda_{1}}\circ R\big)
 =(E\times F)\cap \gr J_{A_{2}/\lambda_{1}}.
$$
\item \label{evening4}
Set $S^*=\left((\Id-R)^{-1}+\big(\frac{A_{1}}{\lambda_{2}}
\times\frac{A_{2}}{\lambda_{1}}\big)^{-1}\right)^{-1}(0,0).$ Then $S^*$ is at most a singleton with
$$S^*=\menge{(u^*,v^*)}{u^*=J_{(A_{1}/\lambda_{2})^{-1}+\widetilde{A_{2}/\lambda_{1}}}(0),
v^*=J_{\widetilde{A_{1}/\lambda_{2}}+(A_{2}/\lambda_{1})^{-1}}(0)}.$$
Moreover, $u^*=-v^*$. (Note that $S^*$ may be empty, which is
equivalent to the impossibility to compute the resolvents defining $u^*$
and $v^*$.)
\item \label{evening5}
 $S^*=(R-\Id)(S)$. Consequently, for every $(x,y)\in S$, $y-x=u^*$, i.e., the gap vector is unique.

\item\label{EandF} $E=(A_{1}/\lambda_{2})^{-1}(u^*)\cap
\left(\Yosida{1}{\big(A_{2}/\lambda_{1}\big)}\right)^{-1}(v^*)$ and
$F=\left(\Yosida{1}{\big(A_{1}/\lambda_{2}\big)}\right)^{-1}(u^*)\cap (A_{2}/\lambda_{1})^{-1}(v^*).$
\item\label{evening6}
 $J_{A_{2}/\lambda_{1}}|_{E}:E\to F: x\mapsto x+u^*$ is a bijection with inverse mapping
$J_{A_{1}/\lambda_{2}}:F\to E: y\mapsto y+v^*$.
\item \label{evening7}
\begin{align}
S &=(E\times F)\cap
(R-\Id)^{-1}(u^*,v^*)\label{schoolbreak1}\\
&=\left(\frac{A_{1}}{\lambda_{2}}\times\frac{A_{2}}{\lambda_{1}}\right)^{-1}(u^*,v^*)
    \cap (R-\Id)^{-1}(u^*,v^*).\label{schoolbreak2}
\end{align}
\item \label{evening600}
$$\Fix J_{A}=\bigg(\frac{\Yosida{1}{A_{1}}}{\lambda_{2}}\bigg)^{-1}(u^*)\bigcap \bigg(\frac{\Yosida{1}{A_{2}}}{\lambda_{1}}\bigg)^{-1}(v^*).$$
\item \label{evening80}
The sets $\Fix (J_{A}), E,F, S$ are closed and convex.
\end{enumerate}
\end{theorem}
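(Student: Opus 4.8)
The plan is to read off parts (ii)--(ix) as the specialization of Fact~\ref{key1} to $A:=A_{1}/\lambda_{2}$, $B:=A_{2}/\lambda_{1}$ and $\gamma:=1$, and to supply genuinely new arguments only for (i), (x) and (xi). First I would record that $A_{1}/\lambda_{2}$ and $A_{2}/\lambda_{1}$ are maximal monotone, since a positive scaling preserves maximal monotonicity, so that Fact~\ref{key1} applies and, with $\gamma=1$, its objects $S,S^*,E,F,u^*,v^*$ become verbatim the ones in (ii)--(v). Then parts (ii)--(viii) and the first identity \eqref{schoolbreak1} of (ix) specialize Fact~\ref{key1}(i)--(vii). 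The only genuine additions here are the gap-vector uniqueness in (vi), which is immediate because $(R-\Id)(x,y)=(y-x,x-y)\in S^*=\{(u^*,v^*)\}$ for every $(x,y)\in S$, and the second identity \eqref{schoolbreak2} of (ix). For \eqref{schoolbreak2} I would add one short step: \eqref{schoolbreak1} together with the inclusions $E\subseteq(A_{1}/\lambda_{2})^{-1}(u^*)$ and $F\subseteq(A_{2}/\lambda_{1})^{-1}(v^*)$ from (vii) gives ``$\subseteq$'', while ``$\supseteq$'' follows by reading $(R-\Id)(x,y)=(u^*,v^*)$, $u^*\in(A_{1}/\lambda_{2})x$ and $v^*\in(A_{2}/\lambda_{1})y$ as exactly the resolvent identities $x=J_{A_{1}/\lambda_{2}}y$ and $y=J_{A_{2}/\lambda_{1}}x$ defining $S$.

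Part (i) is cheap and uses only Fact~\ref{elementarym}(i) and Fact~\ref{knownlong}(i): with $\gamma=1$ one has $\Fix J_{A}=A^{-1}(0)=(\Yosida{1}{A})^{-1}(0)$, while $J_{A}=\lambda_{1}J_{A_{1}}+\lambda_{2}J_{A_{2}}$ and $\Yosida{1}{A}=\lambda_{1}\Yosida{1}{A_{1}}+\lambda_{2}\Yosida{1}{A_{2}}$; the three displayed descriptions of $\Fix J_{A}$ then fall out at once.

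The step I expect to be the crux is (x), the bridge between the resolvent-average fixed points and the dual data $u^*,v^*$, which does not appear in Fact~\ref{key1}. The plan is a direct computation built on $\Yosida{1}{A_{i}}=\Id-J_{A_{i}}$. For the forward inclusion, take $z\in\Fix J_{A}$ and set $x:=J_{A_{1}}z$, $y:=J_{A_{2}}z$; by (i) one has $z=\lambda_{1}x+\lambda_{2}y$, while the construction establishing \eqref{sunday-1}$\Leftrightarrow$\eqref{sunday1} shows $(x,y)\in S$, so $(y-x,x-y)=(u^*,v^*)$ by (vi). Then $\Yosida{1}{A_{1}}(z)/\lambda_{2}=(z-x)/\lambda_{2}=y-x=u^*$ and $\Yosida{1}{A_{2}}(z)/\lambda_{1}=(z-y)/\lambda_{1}=x-y=v^*$, placing $z$ in the claimed intersection. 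For the reverse inclusion I would exploit single-valuedness of each $\Yosida{1}{A_{i}}$: membership forces $J_{A_{1}}z=z-\lambda_{2}u^*$ and $J_{A_{2}}z=z-\lambda_{1}v^*$, whence $\lambda_{1}J_{A_{1}}z+\lambda_{2}J_{A_{2}}z=z-\lambda_{1}\lambda_{2}(u^*+v^*)=z$ because $u^*=-v^*$ by (v); by (i) this says $z\in\Fix J_{A}$. The only real care is the bookkeeping of $\lambda_{1},\lambda_{2}$ and the decisive cancellation $u^*+v^*=0$; I would also note that (x) is to be read in the regime where $u^*,v^*$ are well defined, since by the equivalences all the sets involved are simultaneously empty otherwise.

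Finally, for (xi) the clean route is to exhibit each of $\Fix J_{A},E,F,S$ as the set of zeros of a maximal monotone operator and invoke the standard fact that such a zero set is closed and convex (equivalently, it is the fixed point set of the associated firmly nonexpansive resolvent, Fact~\ref{elementarym}). Here $\Fix J_{A}=A^{-1}(0)$ with $A$ maximal monotone by Fact~\ref{knownlong}(ii); $E$ and $F$ are the zeros of $A_{1}/\lambda_{2}+\Yosida{1}{(A_{2}/\lambda_{1})}$ and of $\Yosida{1}{(A_{1}/\lambda_{2})}+A_{2}/\lambda_{1}$; and $S$ is the zero set of $(\Id-R)+(A_{1}/\lambda_{2}\times A_{2}/\lambda_{1})$. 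The one point deserving a word is maximal monotonicity of these three sums: a Yosida approximation and the bounded monotone operator $\Id-R$ are single-valued, everywhere defined and continuous, so each sum is a maximal monotone operator plus such a perturbation and is therefore itself maximal monotone.
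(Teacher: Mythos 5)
Your proposal is correct, and its skeleton matches the paper's: part (i) by direct computation from $\Yosida{1}{A_i}=\Id-J_{A_i}$, parts (ii)--(viii) and \eqref{schoolbreak1} by specializing Fact~\ref{key1} to $A_1/\lambda_2$, $A_2/\lambda_1$, $\gamma=1$, and part (xi) by exhibiting each set as the zero set of a maximal monotone operator (your ``continuous everywhere-defined monotone perturbation'' justification is the same constraint-qualification-free special case of Rockafellar's sum theorem that the paper cites). Where you genuinely diverge is in the two places Fact~\ref{key1} does not cover. For \eqref{schoolbreak2}, the paper invokes Attouch--Th\'era duality (Fact~\ref{dualityat}(i)) applied to the pair defining $S$ and $S^*$, whereas you deduce ``$\subseteq$'' from \eqref{schoolbreak1} and the inclusions $E\subseteq(A_1/\lambda_2)^{-1}(u^*)$, $F\subseteq(A_2/\lambda_1)^{-1}(v^*)$ of (vii), and ``$\supseteq$'' by unwinding the resolvent identities; both are valid, yours avoiding any appeal to duality. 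For (x) --- which you rightly identify as the crux --- the paper again dualizes: it reads $\Fix J_A$ as the zero set of $\Yosida{1}{A_1}/\lambda_2+\Yosida{1}{A_2}/\lambda_1$, computes its Attouch--Th\'era dual via the algebra of Theorem~\ref{operator}(i) (equation \eqref{lastday} with $\gamma=1$), and then applies Fact~\ref{dualityat}(i). Your argument instead runs through the primal objects only: forward via $x=J_{A_1}z$, $y=J_{A_2}z$, the equivalence \eqref{sunday-1}$\Leftrightarrow$\eqref{sunday1}, and the gap-vector identity of (vi); backward via single-valuedness of the Yosida approximations and the cancellation $u^*+v^*=0$. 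Both computations check out (e.g.\ $z-x=\lambda_2(y-x)=\lambda_2u^*$, and $\lambda_1J_{A_1}z+\lambda_2J_{A_2}z=z-\lambda_1\lambda_2(u^*+v^*)=z$). What the paper's route buys is uniformity --- (x) becomes one more instance of the duality machinery already set up, reinforcing that all these problems share the dual of Theorem~\ref{operator}; what your route buys is self-containedness and elementarity, since it needs neither Theorem~\ref{operator} nor Fact~\ref{dualityat}, only parts (i), (v), (vi) already proved. Your caveat that (x) is vacuous unless $u^*,v^*$ are well defined is the right reading and consistent with the paper's implicit convention.
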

\begin{proof}
(i). $z\in \left(\lambda_{1}\Yosida{1}{A_{1}}
+\lambda_{2}\Yosida{1}{A_{2}}\right)^{-1}(0)$ if and only if
\begin{align*}
0& = \big(\lambda_{1}\Yosida{1}{A_{1}} +\lambda_{2}\Yosida{1}{A_{2}}\big)(z)=\lambda_{1}(\Id-J_{A_{1}})(z)+\lambda_{2}(\Id-J_{A_{2}})(z)\\
&= z-(\lambda_{1}J_{A_{1}}+\lambda_{2}J_{A_{2}})(z)=z-J_{A}(z).
\end{align*}

(ii)-- equation \eqref{schoolbreak1} of (ix) follow by applying Fact~\ref{key1} with $A=A_{1}/\lambda_{2}$, $B=A_{2}/\lambda_{1}$ and $\gamma=1$. 
To show \eqref{schoolbreak2}, we assuem that $S$ and $S^*$ are nonempty.
Note that
$$S=\left\{(x,y): \ (0,0)\in \left( (\Id -R)+\bigg(\frac{A_{1}}{\lambda_{2}}\times
\frac{A_{2}}{\lambda_{1}}\bigg)\right)(x,y)\right\}$$
and that
$$S^*=\left((\Id-R)^{-1}+\bigg(\frac{A_{1}}{\lambda_{2}}
\times\frac{A_{2}}{\lambda_{1}}\bigg)^{-1}\right)^{-1}(0,0),$$
so we can use Fact~\ref{dualityat}(i) to get
$$S=\{(x,y):\ (\exists (u^*,v^*)\in S^*)\ (u^*,v^*)\in \bigg(\frac{A_{1}}{\lambda_{2}}\times
\frac{A_{2}}{\lambda_{1}}\bigg)(x,y), -(u^*,v^*)\in (\Id-R)(x,y)\}.$$
By \ref{evening4}, $S^*$ is singleton so that $S^*=\{(u^*,v^*)\}$. 
Hence \eqref{schoolbreak2} holds.

(x). 
By \ref{set1}, 
$z\in\Fix J_{A}\Leftrightarrow 0\in \bigg(\frac{\Yosida{1}{A_{1}}}{\lambda_{2}}+\frac{\Yosida{1}{A_{2}}}{\lambda_{1}}\bigg)(z)$.
The latter has its Attouch-Thera dual given by
$$0\in \bigg(\frac{\Yosida{1}{A_{1}}}{\lambda_{2}}\bigg)^{-1}(x^*)+\widetilde{\bigg(\frac{\Yosida{1}{A_{2}}}{\lambda_{2}}
\bigg)}(x^*),$$
equivalently by \eqref{lastday} (with $\gamma=1$)
$$0\in [\Id +(A_{1}/\lambda_{2})^{-1}+\widetilde{A_{2}/\lambda_{1}}](z^*),$$
and it has a unique solution $u^*$ by (v). Fact~\ref{dualityat}(i) gives that
$z\in \Fix J_{A}$ if and only if
$$u^*\in \bigg(\frac{\Yosida{1}{A_{1}}}{\lambda_{2}}\bigg)(z),\quad  -u^*=v^*\in \bigg(\frac{\Yosida{1}{A_{2}}}{\lambda_{1}}\bigg)(z), \text{ i.e.,}$$
$$z\in \bigg(\frac{\Yosida{1}{A_{1}}}{\lambda_{2}}\bigg)^{-1}(u^*)\bigcap \bigg(\frac{\Yosida{1}{A_{2}}}{\lambda_{1}}\bigg)^{-1}(v^*).$$

(xi). It is well-known that if $B:\HH\To 2^{\HH}$ is maximal monotone, then $B(x)$ is closed
and convex for every $x\in\HH$. Observe that
$\lambda_{1}\Yosida{1}{A_{1}} +\lambda_{2}\Yosida{1}{A_{2}}$,
$\frac{A_{1}}{\lambda_{2}}+
\Yosida{1}{\big(\tfrac{A_{2}}{\lambda_{1}}\big)}$,
$\Yosida{1}{\big(\tfrac{A_{1}}{\lambda_{2}}\big)}+
\frac{A_{2}}{\lambda_{1}}$, and
$(\Id -R)+\big(\frac{A_{1}}{\lambda_{2}}\times
\frac{A_{2}}{\lambda_{1}}\big)$
are maximal monotone operators by Rockafellar's sum theorem, see \cite[pages 104--105]{Simons} or
\cite{rockmax}.
Then $\big(\lambda_{1}\Yosida{1}{A_{1}} +\lambda_{2}\Yosida{1}{A_{2}}\big)^{-1}$,
$\big(\frac{A_{1}}{\lambda_{2}}+
\Yosida{1}{\big(\tfrac{A_{2}}{\lambda_{1}}\big)}\big)^{-1}$, $\big(\Yosida{1}{\big(\tfrac{A_{1}}{\lambda_{2}}\big)}+
\frac{A_{2}}{\lambda_{1}}\big)^{-1}$, $\big((\Id -R)+\big(\frac{A_{1}}{\lambda_{2}}\times
\frac{A_{2}}{\lambda_{1}}\big)\big)^{-1}$ are maximal monotone. Hence the result 
holds by the definitions of these sets given in \ref{set1}--\ref{set4}.
\end{proof}

\subsection{Relationship among solution sets}\label{keyresultnow}
Recall that
\begin{equation}\label{usedlater}
J_{A}=\lambda_{1} J_{A_{1}}+\lambda_{2}J_{A_{2}}\quad \text{
 with $\lambda_{1}+\lambda_{2}=1$ and $\lambda_{i}>0$.}
 \end{equation}
We now study the relationships among
$\Fix J_{A},$ $ E=\Fix(J_{A_{1}/\lambda_{2}}J_{A_{2}/\lambda_{1}}),$
$F=\Fix(J_{A_{2}/\lambda_{1}}J_{A_{1}/\lambda_{2}}), \text{ and }$
$$S=\menge{(x,y)}{x=J_{A_{1}/\lambda_{2}}y, y=J_{A_{2}/\lambda_{1}}x}.$$

\begin{lemma}\label{general}
\begin{enumerate}
\item
If
$x=J_{A_{1}/\lambda_{2}}y,\quad y=J_{A_{2}/\lambda_{1}}x,$
then
$$\lambda_{1}x+\lambda_{2}y\in\Fix J_{A}, \quad x\in\Fix J_{A_{1}/\lambda_{2}}J_{A_{2}/\lambda_{1}},
\quad y\in \Fix J_{A_{2}/\lambda_{1}}J_{A_{1}/\lambda_{2}}.$$

\item\label{com1} If $x=J_{A_{1}/\lambda_{2}}J_{A_{2}/\lambda_{1}}x$, put $y=J_{A_{2}/\lambda_{1}}x$, then
$\lambda_{1}x+\lambda_{2}y\in \Fix J_{A}$.
\item\label{com2} If $y=J_{A_{2}/\lambda_{1}}J_{A_{1}/\lambda_{2}}y$, put $x=J_{A_{1}/\lambda_{2}}y$, then
$\lambda_{1}x+\lambda_{2}y\in \Fix J_{A}$.
\end{enumerate}
\end{lemma}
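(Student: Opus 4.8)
The plan is to reduce everything to part~(i), which itself rests on rewriting each resolvent equation as a monotone inclusion. Recall from the definition $J_{B}=(B+\Id)^{-1}$ that for any (maximal) monotone $B$ one has $u=J_{B}w\Leftrightarrow w-u\in Bu$. Applying this with $B=A_{1}/\lambda_{2}$ and with $B=A_{2}/\lambda_{1}$, the two hypotheses of~(i) translate into $\lambda_{2}(y-x)\in A_{1}x$ and $\lambda_{1}(x-y)\in A_{2}y$.

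First I would set $z:=\lambda_{1}x+\lambda_{2}y$ and record the two elementary identities $z-x=\lambda_{2}(y-x)$ and $z-y=\lambda_{1}(x-y)$, both consequences of $\lambda_{1}+\lambda_{2}=1$. Combining these with the inclusions above yields $z-x\in A_{1}x$ and $z-y\in A_{2}y$, that is, $x=J_{A_{1}}z$ and $y=J_{A_{2}}z$. Consequently $\lambda_{1}J_{A_{1}}z+\lambda_{2}J_{A_{2}}z=\lambda_{1}x+\lambda_{2}y=z$, and the characterization in Theorem~\ref{evening}(i) (equivalently Fact~\ref{knownlong}(i)) gives $z\in\Fix J_{A}$. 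The two composition statements in~(i) are then immediate substitutions: $J_{A_{1}/\lambda_{2}}J_{A_{2}/\lambda_{1}}x=J_{A_{1}/\lambda_{2}}y=x$ and $J_{A_{2}/\lambda_{1}}J_{A_{1}/\lambda_{2}}y=J_{A_{2}/\lambda_{1}}x=y$.

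For parts~(ii) and~(iii) I would only need to check that the prescribed constructions produce a pair satisfying the hypothesis of~(i). In~(ii), from $x=J_{A_{1}/\lambda_{2}}J_{A_{2}/\lambda_{1}}x$ together with the choice $y:=J_{A_{2}/\lambda_{1}}x$ one reads off both $y=J_{A_{2}/\lambda_{1}}x$ and $x=J_{A_{1}/\lambda_{2}}y$; hence $(x,y)$ meets the hypothesis of~(i) and $\lambda_{1}x+\lambda_{2}y\in\Fix J_{A}$ follows. Part~(iii) is the mirror image, starting from $y=J_{A_{2}/\lambda_{1}}J_{A_{1}/\lambda_{2}}y$ with $x:=J_{A_{1}/\lambda_{2}}y$.

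I do not expect a serious obstacle: the only genuine content is the observation that the weighted average $z=\lambda_{1}x+\lambda_{2}y$ is engineered precisely so that the gap vector $y-x$ reappears, scaled by $\lambda_{2}$ and by $-\lambda_{1}$ respectively, as the residuals $z-x$ and $z-y$. This is what allows the \emph{single} point $z$ to serve simultaneously as the resolvent argument for both $A_{1}$ and $A_{2}$; the only care required is in keeping the two weights $\lambda_{1},\lambda_{2}$ correctly matched to the two operators.
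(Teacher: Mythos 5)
Your proof is correct and takes essentially the same route as the paper: you translate the two resolvent equations into the inclusions $\lambda_{2}(y-x)\in A_{1}x$ and $\lambda_{1}(x-y)\in A_{2}y$, observe that $z=\lambda_{1}x+\lambda_{2}y$ then satisfies $x=J_{A_{1}}z$ and $y=J_{A_{2}}z$, conclude from $J_{A}=\lambda_{1}J_{A_{1}}+\lambda_{2}J_{A_{2}}$ that $z\in\Fix J_{A}$, and reduce (ii) and (iii) to (i) exactly as the paper does. Your explicit check of the two composition memberships in (i) is a minor addition that the paper leaves implicit.
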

\begin{proof}
(i). We have
$$x\in J_{A_{1}/\lambda_{2}}y\quad \Leftrightarrow \quad y\in \frac{A_{1}}{\lambda_{2}}x+x,$$
$$y\in J_{A_{2}/\lambda_{1}}x\quad \Leftrightarrow \quad x\in \frac{A_{2}}{\lambda_{1}}y+y,$$
so that
$-\lambda_{2}x+\lambda_{2}y\in A_{1}x$ and $\lambda_{1}x-\lambda_{1}y\in A_{2}y$. Then
$\lambda_{1}x+\lambda_{2}y\in A_{1}x+x$, $\lambda_{1}x+\lambda_{2}y\in A_{2}y+y$, equivalently
$x=J_{A_{1}}(\lambda_{1}x+\lambda_{2}y), y=J_{A_{2}}(\lambda_{1}x+\lambda_{2}y)$. This gives
$$\lambda_{1}x+\lambda_{2}y=\lambda_{1}J_{A_{1}}(\lambda_{1}x+\lambda_{2}y)+\lambda_{2}J_{A_{2}}
(\lambda_{1}x+\lambda_{2}y)=[\lambda_{1}J_{A_{1}}+\lambda_{2}J_{A_{2}}
](\lambda_{1}x+\lambda_{2}y).$$
Hence $\lambda_{1}x+\lambda_{2}y\in \Fix J_{A}$.

(ii) and (iii): In either \ref{com1} or \ref{com2}, we have
$x=J_{A_{1}/\lambda_{2}}y,\quad y=J_{A_{2}/\lambda_{1}}x.$
Hence (i) applies.
\end{proof}
\begin{lemma}\label{decomposition}
If $x\in \Fix J_{A}$, then
\begin{equation}\label{avatar1}
J_{A_{1}}x=J_{A_{1}/\lambda_{2}}(J_{A_{2}}x),
\end{equation}
\begin{equation}\label{avatar2}
J_{A_{2}}x=J_{A_{2}/\lambda_{1}}(J_{A_{1}}x).
\end{equation}
Consequently,
$J_{A_{1}}x\in \Fix J_{A_{1}/\lambda_{2}}J_{A_{2}/\lambda_{1}}$ and
$J_{A_{2}}x\in \Fix J_{A_{2}/\lambda_{1}}J_{A_{1}/\lambda_{2}}.$
\end{lemma}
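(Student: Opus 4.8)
The plan is to unfold the hypothesis $x\in\Fix J_{A}$ into inclusions for $A_1$ and $A_2$ separately, and then recombine them to expose the scaled resolvents $J_{A_1/\lambda_2}$ and $J_{A_2/\lambda_1}$. First I would abbreviate $u:=J_{A_1}x$ and $v:=J_{A_2}x$ (both well defined, since maximal monotonicity of $A_1,A_2$ forces $J_{A_1},J_{A_2}$ to have full domain by Fact~\ref{elementarym}(iii)). Since $A$ is the $\gamma=1$ resolvent average, Fact~\ref{knownlong}(i) gives $J_A=\lambda_1 J_{A_1}+\lambda_2 J_{A_2}$, so the hypothesis reads $x=\lambda_1 u+\lambda_2 v$.

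The decisive step is to rewrite the resolvent defining relations $x-u\in A_1u$ and $x-v\in A_2v$ after substituting $x=\lambda_1u+\lambda_2v$. Using $\lambda_1+\lambda_2=1$, I compute $x-u=\lambda_2(v-u)$ and $x-v=\lambda_1(u-v)$, whence $\lambda_2(v-u)\in A_1u$ and $\lambda_1(u-v)\in A_2v$. Dividing the first inclusion by the positive scalar $\lambda_2$ gives $v-u\in(A_1/\lambda_2)u$, i.e.\ $v\in(\Id+A_1/\lambda_2)u$, which is precisely $u=J_{A_1/\lambda_2}v$; this is \eqref{avatar1}. Symmetrically, dividing the second inclusion by $\lambda_1$ yields $v=J_{A_2/\lambda_1}u$, which is \eqref{avatar2}.

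For the concluding assertion I would simply chain the two identities: since $v=J_{A_2/\lambda_1}u$ and $u=J_{A_1/\lambda_2}v$, substitution gives $J_{A_1/\lambda_2}J_{A_2/\lambda_1}u=J_{A_1/\lambda_2}v=u$, so that $u=J_{A_1}x\in\Fix J_{A_1/\lambda_2}J_{A_2/\lambda_1}$, and the mirror computation gives $v=J_{A_2}x\in\Fix J_{A_2/\lambda_1}J_{A_1/\lambda_2}$. I do not expect a genuine obstacle here; the argument is a short algebraic manipulation. The only points demanding care are the sign bookkeeping arising from $\lambda_1-1=-\lambda_2$ and $\lambda_2-1=-\lambda_1$, and the (immediate) fact that dividing a monotone inclusion by a positive scalar corresponds to passing from $A_i$ to the scaled operator $A_i/\lambda_j$, so that the scaled resolvents are correctly identified.
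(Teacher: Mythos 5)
Your proof is correct and takes essentially the same route as the paper's: both unfold the fixed-point identity $x=\lambda_{1}J_{A_{1}}x+\lambda_{2}J_{A_{2}}x$ together with the resolvent inclusion $x-J_{A_{1}}x\in A_{1}(J_{A_{1}}x)$, rewrite the residual as $\lambda_{2}(J_{A_{2}}x-J_{A_{1}}x)$, and divide by the positive weight to recognize $J_{A_{1}/\lambda_{2}}$ (and symmetrically for $J_{A_{2}/\lambda_{1}}$). The only difference is organizational: the paper runs a chain of equivalences starting from the trivial identity $J_{A_{1}}x=J_{A_{1}}x$ and substitutes $J_{A_{2}}x=(x-\lambda_{1}J_{A_{1}}x)/\lambda_{2}$ at the end, whereas you substitute $x=\lambda_{1}u+\lambda_{2}v$ at the outset --- the underlying algebra is identical.
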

\begin{proof} Let us show \eqref{avatar1}.
By assumption,
 $x=\lambda_{1}J_{A_{1}}x+\lambda_{2}J_{A_{2}}x,$
 we write
\begin{equation}\label{fixpoint}
J_{A_{2}}x=\frac{x-\lambda_{1}J_{A_{1}}x}{\lambda_{2}}.
\end{equation}
We have
\begin{align*}
J_{A_{1}}x=J_{A_{1}}x
\Leftrightarrow &\quad  x\in (A_{1}+\Id)(J_{A_{1}}x)=A_{1}(J_{A_{1}}x)+J_{A_{1}}x\\
\Leftrightarrow &\quad x\in A_{1}(J_{A_{1}}x)+\lambda_{1}J_{A_{1}}x +\lambda_{2}J_{A_{1}}x \quad \text{(since
$\lambda_{1}+\lambda_{2}=1$)}\\
\Leftrightarrow & \quad x-\lambda_{1}J_{A_{1}}x\in A_{1}(J_{A_{1}}x)+\lambda_{2}J_{A_{1}}x=(A_{1}+\lambda_{2}\Id)(J_{A_{1}}x)\\
\Leftrightarrow
&
\quad \frac{x-\lambda_{1}J_{A_{1}}x}{\lambda_{2}}\in\big(A_{1}/\lambda_{2}+\Id\big)(J_{A_{1}}x)\\
\Leftrightarrow
& \quad J_{A_{2}}x\in \big(A_{1}/\lambda_{2}+\Id\big)(J_{A_{1}}x) \quad \text{ (by \eqref{fixpoint})}\\
\Leftrightarrow
&\quad  J_{A_{1}}x=J_{A_{1}/\lambda_{2}}(J_{A_{2}}x).
\end{align*}
The proof of \eqref{avatar2} is similar.
\end{proof}

Note that
$$S=\menge{(x,y)}{x=J_{A_{1}/\lambda_{2}}y, y=J_{A_{2}/\lambda_{1}}x}.$$
\begin{theorem}\label{StoJ}
 Define
$$T:S\to \Fix J_{A}: (x,y)\mapsto \lambda_{1}x+\lambda_{2}y.$$
Then $T$ is a homeomorphism, and the inverse of $T$ is given by
$$T^{-1}:\Fix J_{A} \to S: z\mapsto (J_{A_{1}}z,J_{A_{2}}z).$$
Consequently, $\Fix J_{A}=L(S)$ where $L:\HH\times \HH\rightarrow \HH: (x,y)\mapsto \lambda_1 x+\lambda_{2}y$.
\end{theorem}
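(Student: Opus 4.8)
The plan is to verify directly that $T$ and the proposed map $T^{-1}$ are well-defined and mutually inverse, and then to check continuity of each separately; the two preparatory lemmas supply almost everything. First I would confirm well-definedness. That $T$ actually lands in $\Fix J_A$ is exactly Lemma~\ref{general}(i), so $T\colon S\to\Fix J_A$ makes sense. For the candidate inverse, Lemma~\ref{decomposition} — precisely the identities \eqref{avatar1} and \eqref{avatar2} — shows that for $z\in\Fix J_A$ the pair $(J_{A_1}z,J_{A_2}z)$ satisfies $J_{A_1}z=J_{A_1/\lambda_2}(J_{A_2}z)$ and $J_{A_2}z=J_{A_2/\lambda_1}(J_{A_1}z)$, i.e.\ $(J_{A_1}z,J_{A_2}z)\in S$. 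Hence $T^{-1}\colon\Fix J_A\to S$ is also well defined.

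Next I would establish the two composition identities. For $T\circ T^{-1}=\Id_{\Fix J_A}$ I simply compute, for $z\in\Fix J_A$, that $T(J_{A_1}z,J_{A_2}z)=\lambda_1 J_{A_1}z+\lambda_2 J_{A_2}z=z$, the last equality being the defining property of $\Fix J_A$ recorded in Theorem~\ref{evening}(i). For $T^{-1}\circ T=\Id_S$ the crucial observation is already buried inside the proof of Lemma~\ref{general}(i): for $(x,y)\in S$ and $z:=\lambda_1 x+\lambda_2 y$ one has not only $z\in\Fix J_A$ but in fact $x=J_{A_1}z$ and $y=J_{A_2}z$. Consequently $T^{-1}(T(x,y))=T^{-1}(z)=(J_{A_1}z,J_{A_2}z)=(x,y)$. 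This yields that $T$ is a bijection with the asserted inverse. I expect this to be the only non-routine point: one must extract the pointwise equalities $x=J_{A_1}z$, $y=J_{A_2}z$ from the \emph{interior} of Lemma~\ref{general}'s argument rather than merely from its stated conclusion.

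It remains to treat continuity. The map $T$ is the restriction to $S$ of the bounded linear operator $L$, hence continuous. For $T^{-1}$, note that $A_1$ and $A_2$ are maximal monotone, so by Fact~\ref{elementarym}(ii) the resolvents $J_{A_1}$ and $J_{A_2}$ are firmly nonexpansive, in particular nonexpansive and therefore Lipschitz continuous; thus $z\mapsto(J_{A_1}z,J_{A_2}z)$ is continuous on $\Fix J_A$. Being a continuous bijection with continuous inverse, $T$ is a homeomorphism.

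Finally, the consequence $\Fix J_A=L(S)$ is immediate: since $T=L|_S$ and $T$ maps $S$ onto $\Fix J_A$, we obtain $L(S)=T(S)=\Fix J_A$.
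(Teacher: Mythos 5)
Your proposal is correct, but it handles the key step --- injectivity of $T$ --- by a genuinely different route than the paper. The paper proves that $T$ is one-to-one by invoking the uniqueness of the gap vector: by Theorem~\ref{evening}\ref{evening4} and \ref{evening5}, every $(x,y)\in S$ satisfies $x-y=v^*$ for the unique dual solution, so two pairs in $S$ with equal averages $\lambda_1x_1+\lambda_2y_1=\lambda_1x_2+\lambda_2y_2$ must coincide. You instead verify $T^{-1}\circ T=\Id_S$ directly, extracting from the interior of the proof of Lemma~\ref{general}(i) the pointwise identities $x=J_{A_1}(\lambda_1 x+\lambda_2 y)$ and $y=J_{A_2}(\lambda_1 x+\lambda_2 y)$ for $(x,y)\in S$; those identities are indeed explicitly derived there (the step ``$x=J_{A_{1}}(\lambda_{1}x+\lambda_{2}y),\ y=J_{A_{2}}(\lambda_{1}x+\lambda_{2}y)$''), so your citation is legitimate, though it would be cleaner to re-derive the two lines (three lines of computation from the definition of $S$) rather than lean on another proof's internals. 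What your route buys is independence from the Attouch--Th\'era duality machinery: you never need $S^*$, Fact~\ref{key1}, or Theorem~\ref{evening}\ref{evening4}--\ref{evening5}, and bijectivity and the formula for $T^{-1}$ arrive simultaneously. What the paper's route makes visible, and your argument leaves implicit, is the rigidity of $S$ --- all its elements share the single difference vector $v^*$ --- a fact the paper reuses immediately afterwards (e.g., in Theorem~\ref{plumber}). Your continuity discussion ($T$ as the restriction of the bounded linear $L$; $T^{-1}$ built from nonexpansive resolvents via Fact~\ref{elementarym}(ii)) supplies details that the paper merely asserts.
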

\begin{proof}
For every $(x,y)\in S$, by Lemma~\ref{general}(i), $T(x,y)\in \Fix J_{A}$,
so $T(S)\subseteq \Fix J_{A}$.
For every $z\in \Fix J_{A}$, by Lemma~\ref{decomposition},
$(J_{A_{1}}z, J_{A_{2}}z)\in S$ and $z=\lambda_{1}J_{A_{1}}(z)+\lambda_{2}J_{A_{2}}(z)=T(J_{A_{1}}z, J_{A_{2}}z)$, thus $T(S)\supseteq \Fix J_{A}$. Hence $T(S)=\Fix J_{A}$, i.e., $T$ is onto.
To show that $T$ is one-to-one, let $(x_{i},y_{i})\in S$
for $i\in\{1,2\}$.
If $T(x_{1},y_{1})=T(x_{2},y_{2})$, i.e., $\lambda_{1}x_{1}+\lambda_{2}y_{1}=\lambda_{1}x_{2}+\lambda_{2}y_{2}$, then
$$\lambda_{1}(x_{1}-y_{1})+y_{1}=\lambda_{1}(x_{2}-y_{2})+y_{2}.$$
By Theorem~\ref{evening}\ref{evening4} and \ref{evening5}, $S^*$ is unique and $x_{1}-y_{1}=x_{2}-y_2=v^*$, thus $y_{1}=y_{2}$ and $x_{1}=x_{2}$.

Since for $z\in \Fix J_{A}$, $(J_{A_{1}}(z),J_{A_{2}}(z))\in S$ and $z=T(J_{A_{1}}(z),J_{A_{2}}(z))$,
$T$ is one-to-one and onto, we obtain that
$T^{-1}(z)=(J_{A_{1}}(z),J_{A_{2}}(z)).$ In addition, both $T, T^{-1}$ are continuous. Hence
$T: S\to \Fix J_{A}$ is a homeomorphism.
\end{proof}
\begin{theorem}\label{monday}
\begin{enumerate}
\item\label{EtoS} The mapping
$$T_{1}:E\to S: x\mapsto (x, J_{A_{2}/\lambda_{1}}x),$$
is a homeomorphism and its inverse is given by
$$T_{1}^{-1}:S\to E: (x,y)\mapsto x.$$
\item\label{FtoS} The mapping
$$T_{2}:F\to S: y\mapsto (J_{A_{1}/\lambda_{2}}y,y),$$
is a homeomorphism and its inverse is given by
$$T_{2}^{-1}:S\to F: (x,y)\mapsto y.$$
\end{enumerate}
\end{theorem}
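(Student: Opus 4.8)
The plan is to treat both parts as pure bookkeeping built on the defining identities of $S$, $E$, and $F$, together with the (Lipschitz) continuity of resolvents. I will spell out part~\ref{EtoS} in full; part~\ref{FtoS} then follows by interchanging the roles of $A_1/\lambda_2$ and $A_2/\lambda_1$, of $E$ and $F$, and of the two coordinates. Throughout I use that $A_1/\lambda_2$ and $A_2/\lambda_1$ are maximal monotone (a positive scaling preserves maximal monotonicity), so that by Fact~\ref{elementarym}(iii) the resolvents $J_{A_1/\lambda_2}$ and $J_{A_2/\lambda_1}$ have full domain; this is what makes the lift $x\mapsto(x,J_{A_2/\lambda_1}x)$ everywhere defined.

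First I would check that $T_1$ is well defined, i.e.\ that $T_1(E)\subseteq S$. If $x\in E$, then by definition $x=J_{A_1/\lambda_2}J_{A_2/\lambda_1}x$; setting $y:=J_{A_2/\lambda_1}x$ yields simultaneously $x=J_{A_1/\lambda_2}y$ and $y=J_{A_2/\lambda_1}x$, which is exactly the pair of equations defining membership in $S$. Hence $T_1(x)=(x,y)\in S$.

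Next I would establish the bijection. For surjectivity, given $(x,y)\in S$ the relations $x=J_{A_1/\lambda_2}y$ and $y=J_{A_2/\lambda_1}x$ combine to $x=J_{A_1/\lambda_2}J_{A_2/\lambda_1}x$, so $x\in E$, and then $T_1(x)=(x,J_{A_2/\lambda_1}x)=(x,y)$. Injectivity is immediate because $T_1$ preserves the first coordinate: if $T_1(x_1)=T_1(x_2)$ then $x_1=x_2$. Consequently the coordinate projection $(x,y)\mapsto x$ restricted to $S$ is a two-sided inverse of $T_1$, which is precisely the stated formula for $T_1^{-1}$.

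Finally I would verify the topological part. The map $T_1$ is continuous because $J_{A_2/\lambda_1}$ is firmly nonexpansive, hence nonexpansive and therefore continuous, by Fact~\ref{elementarym}(ii), so that $x\mapsto(x,J_{A_2/\lambda_1}x)$ is continuous on $E$; and $T_1^{-1}$ is the restriction to $S$ of a coordinate projection on $\HH\times\HH$, which is trivially continuous. Thus $T_1$ is a homeomorphism. I do not expect a genuine obstacle: the result is essentially a matching of definitions, and the only point requiring care is ensuring the composition identities defining $E$ and $S$ line up correctly under the lift $x\mapsto(x,J_{A_2/\lambda_1}x)$. Part~\ref{FtoS} is then dispatched by the symmetric argument, lifting $y\in F$ to $(J_{A_1/\lambda_2}y,y)$ and taking projection onto the second coordinate as the inverse.
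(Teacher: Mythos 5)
Your proof is correct and takes essentially the same route as the paper's: lift $x\in E$ to $(x,J_{A_{2}/\lambda_{1}}x)$, verify the two defining equations of $S$ for well-definedness and surjectivity, note injectivity from preservation of the first coordinate, and handle \ref{FtoS} by symmetry. The only difference is that you explicitly verify continuity of $T_{1}$ (via nonexpansiveness of the resolvent) and of $T_{1}^{-1}$ (as a restricted coordinate projection), a point the paper leaves implicit.
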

\begin{proof}
We only prove \ref{EtoS}, since \ref{FtoS} can be proved similarly.
To see \ref{EtoS}, let $x\in E$. By the definition of $E$,
$x=J_{A_{1}/\lambda_{2}}J_{A_{2}/\lambda_{1}}x$. Put $y=J_{A_{2}/\lambda_{1}}x$.
We have
$$x=J_{A_{1}/\lambda_{2}}y, \quad y=J_{A_{2}/\lambda_{1}}x,$$
whence $T_{1}x=(x,y)\in S$. Therefore, $T_{1}(E)\subseteq S$. Now for every $(x,y)\in S$,
by the definition of $S$,
$$x=J_{A_{1}/\lambda_{2}}y, \quad y=J_{A_{2}/\lambda_{1}}x,$$
then $x\in E$ and $(x,y)=(x, J_{A_{2}/\lambda_{1}}x)=T_{1}x.$ Therefore, $S\subseteq T_{1}(E)$. Hence
$T_{1}(E)=S$. Clearly, $T_{1}$ is one-to-one.
Altogether, $T_{1}$ is one-to-one and onto. Since for every $(x,y)\in S$,
$$(x,y)=(x, J_{A_{2}/\lambda_{1}}x)=T_{1}x,$$
we have $T_{1}^{-1}(x,y)=x$.
\end{proof}

The next result provides a partial answer to a question
raised by C.~Byrne (see \cite[page~305]{charles}).
It provides the transformations to go back and forth
between fixed point sets of compositions of resolvents and
the fixed point set of the average.

\begin{theorem}\label{plumber}
Let $u^*$ be given as in Theorem~\ref{evening}(v).
\begin{enumerate}
\item\label{monday1}
The mapping $$ H_{1}: E\to \Fix J_{A}:
x\mapsto \lambda_{1} x+\lambda_{2} J_{A_{2}/\lambda_{1}}x
=x+\lambda_{2} u^*,$$
is a homeomorphism. Moreover, $H_{1}^{-1}:\Fix J_{A}\to E$ is given by $H_{1}^{-1}(z)=
J_{A_{1}}(z)$. Hence
\begin{equation}\label{break1}
\Fix J_{A}=E+\lambda_{2}u^*.
\end{equation}
\item \label{monday2}
The mapping
$$H_{2}: F\to\Fix J_{A}: y\mapsto \lambda_{1}J_{A_{1}/\lambda_{2}}y+\lambda_{2}y=
-\lambda_1 u^*+y,$$
is a homeomorphism. Moreover, $H_{2}^{-1}:\Fix J_{A}\to F$ is given by $H_{2}^{-1}(z)=
J_{A_{2}}(z)$. Hence
\begin{equation}\label{break2}
\Fix J_{A}=F-\lambda_{1}u^*.
\end{equation}
\end{enumerate}
\end{theorem}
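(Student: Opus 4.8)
The plan is to recognize $H_1$ and $H_2$ as compositions of homeomorphisms already constructed in Theorems~\ref{StoJ} and~\ref{monday}, so that the homeomorphism property and the formulas for the inverses come essentially for free, and then to read off the explicit affine form from the uniqueness of the gap vector in Theorem~\ref{evening}\ref{evening5}.

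First I would observe that for $x\in E$ the pair $T_{1}(x)=(x,J_{A_{2}/\lambda_{1}}x)$ lies in $S$ by Theorem~\ref{monday}\ref{EtoS}, and applying $T$ gives $T(T_{1}(x))=\lambda_{1}x+\lambda_{2}J_{A_{2}/\lambda_{1}}x=H_{1}(x)$. Thus $H_{1}=T\circ T_{1}\colon E\to\Fix J_{A}$. Since $T_{1}\colon E\to S$ and $T\colon S\to\Fix J_{A}$ are homeomorphisms, their composition $H_{1}$ is a homeomorphism, with inverse $H_{1}^{-1}=T_{1}^{-1}\circ T^{-1}$. For $z\in\Fix J_{A}$, Theorem~\ref{StoJ} gives $T^{-1}(z)=(J_{A_{1}}z,J_{A_{2}}z)\in S$, and $T_{1}^{-1}$ extracts the first coordinate, so $H_{1}^{-1}(z)=J_{A_{1}}z$, as claimed. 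The mapping $H_{2}=T\circ T_{2}$ is handled identically using Theorem~\ref{monday}\ref{FtoS}; here $T_{2}^{-1}$ extracts the second coordinate, yielding $H_{2}^{-1}(z)=J_{A_{2}}z$.

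Next I would verify the affine formulas. For $x\in E$ the pair $(x,J_{A_{2}/\lambda_{1}}x)$ belongs to $S$, so by Theorem~\ref{evening}\ref{evening5} its gap vector satisfies $J_{A_{2}/\lambda_{1}}x-x=u^{*}$. Substituting and using $\lambda_{1}+\lambda_{2}=1$ gives
$$H_{1}(x)=\lambda_{1}x+\lambda_{2}(x+u^{*})=x+\lambda_{2}u^{*}.$$
Symmetrically, for $y\in F$ the pair $(J_{A_{1}/\lambda_{2}}y,y)\in S$ yields $y-J_{A_{1}/\lambda_{2}}y=u^{*}$, whence
$$H_{2}(y)=\lambda_{1}(y-u^{*})+\lambda_{2}y=y-\lambda_{1}u^{*}.$$
Finally, since $H_{1}$ and $H_{2}$ are bijections of this affine form, applying them to all of $E$ and $F$ gives $\Fix J_{A}=H_{1}(E)=E+\lambda_{2}u^{*}$ and $\Fix J_{A}=H_{2}(F)=F-\lambda_{1}u^{*}$, which are exactly \eqref{break1} and \eqref{break2}.

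Once Theorems~\ref{StoJ} and~\ref{monday} together with the gap-vector uniqueness in Theorem~\ref{evening} are in hand, the argument is essentially bookkeeping, and there is no serious analytic obstacle: continuity of $H_{1},H_{2}$ and their inverses is inherited from that of $T,T_{1},T_{2}$. The only points needing care are to confirm that the relevant pairs lie in $S$ so that the gap-vector identity applies, and to match the two coordinate projections $T_{1}^{-1},T_{2}^{-1}$ to the correct resolvents $J_{A_{1}},J_{A_{2}}$.
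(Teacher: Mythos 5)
Your proposal is correct and follows essentially the same route as the paper's proof: the paper likewise writes $H_{1}=T\circ T_{1}$ and $H_{2}=T\circ T_{2}$ using Theorems~\ref{StoJ} and~\ref{monday}, and obtains the affine formulas $x+\lambda_{2}u^*$ and $y-\lambda_{1}u^*$ from the gap-vector uniqueness in Theorem~\ref{evening}\ref{evening5}. The only difference is cosmetic: you spell out the inverse as $T_{1}^{-1}\circ T^{-1}$ (respectively $T_{2}^{-1}\circ T^{-1}$) explicitly, which the paper leaves implicit.
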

\begin{proof} Combine Theorem~\ref{StoJ} and Theorem~\ref{monday}. Using the same notations as in
Theorem~\ref{StoJ} and Theorem~\ref{monday}, \ref{monday1} follows from $H_{1}=T\circ T_{1}$;
\ref{monday2} follows from $H_{2}=T\circ T_{2}$. Moreover,
$$(\forall \ x\in E)\ H_{1}(x)=\lambda_{1} x+\lambda_{2} J_{A_{2}/\lambda_{1}}x=x+\lambda_{2}(J_{A_{2}/\lambda_{1}}x-x)=x+\lambda_{2}u^*,$$
$$(\forall \ y\in F)\ H_{2}(y)=\lambda_{1}J_{A_{1}/\lambda_{2}}y+\lambda_{2}y=\lambda_{1}(J_{A_{1}/\lambda_{2}}y-y)+y
=\lambda_{1}(-u^*)+y,$$
by Theorem~\ref{evening}\ref{evening5}. Hence \eqref{break1} and \eqref{break2} hold.
\end{proof}

\begin{corollary}\label{allsets}
The following is true.
\begin{enumerate}
\item $E\neq\varnothing$ $\Leftrightarrow$
$F\neq\varnothing$ $\Leftrightarrow$
$S\neq\varnothing$ $\Leftrightarrow$
$S^*\neq\varnothing$ $\Leftrightarrow$
$\Fix J_{A}\neq\varnothing$.
\item
$E$ is a singleton $\Leftrightarrow$
$F$ is a singleton $\Leftrightarrow$
$S$ is a singleton $\Leftrightarrow$
$\Fix J_{A}$ is a singleton.
\item $\Fix J_{A}= \lambda_{1}E+\lambda_{2}F.$
\end{enumerate}
\end{corollary}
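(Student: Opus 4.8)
The plan is to obtain all three assertions as immediate consequences of the homeomorphisms established in Theorems~\ref{StoJ}, \ref{monday}, and \ref{plumber}, together with the equalities of Theorem~\ref{plumber}. The chain of homeomorphisms $E \xrightarrow{T_1} S \xrightarrow{T} \Fix J_A$, $F \xrightarrow{T_2} S \xrightarrow{T} \Fix J_A$, and $S \xrightarrow{T} \Fix J_A$ directly links the four sets $E$, $F$, $S$, and $\Fix J_A$. Since a homeomorphism is in particular a bijection between the underlying sets, each of these maps is nonempty exactly when its image is nonempty, and is a singleton exactly when its image is a singleton. This gives parts (i) and (ii) for the four sets $E$, $F$, $S$, and $\Fix J_A$ with essentially no computation.

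For part (i), I would first record that the five-fold equivalence $S \neq \varnothing \Leftrightarrow E \neq \varnothing \Leftrightarrow F \neq \varnothing \Leftrightarrow S^* \neq \varnothing$ (and the well-definedness of $u^*, v^*$) is already furnished by Fact~\ref{key1}, applied with $A = A_1/\lambda_2$, $B = A_2/\lambda_1$, and $\gamma = 1$, exactly as was done to prove Theorem~\ref{evening}. It then remains only to tie $\Fix J_A$ into this chain, which follows from $T: S \to \Fix J_A$ being a bijection (Theorem~\ref{StoJ}): $S \neq \varnothing \Leftrightarrow \Fix J_A \neq \varnothing$.

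For part (ii), I would argue that a bijection maps a singleton to a singleton and vice versa. Concretely, $T_1$ being a bijection gives ``$E$ is a singleton $\Leftrightarrow$ $S$ is a singleton'', $T_2$ gives the analogous statement for $F$, and $T$ gives it for $\Fix J_A$; chaining these yields the full equivalence. For part (iii), the identity $\Fix J_A = \lambda_1 E + \lambda_2 F$ is the content I would extract from Theorem~\ref{plumber}: equations \eqref{break1} and \eqref{break2} read $\Fix J_A = E + \lambda_2 u^*$ and $\Fix J_A = F - \lambda_1 u^*$. Adding $\lambda_1$ times the first to $\lambda_2$ times the second, and using $\lambda_1 + \lambda_2 = 1$ so that the $u^*$ terms cancel ($\lambda_1 \lambda_2 u^* - \lambda_2 \lambda_1 u^* = 0$), gives $\Fix J_A = \lambda_1 E + \lambda_2 F$ directly. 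One should read these set identities up to the (trivial, degenerate) case where all sets are empty, in which case all three displayed sums are empty and the identity holds vacuously.

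I do not anticipate a genuine obstacle here: the corollary is a bookkeeping consequence of the structural theorems already proved. The only point demanding a modicum of care is part (iii), where the set-valued arithmetic $\lambda_1 E + \lambda_2 F$ must be interpreted as a Minkowski-type combination and the cancellation of the $u^*$ shift must be carried out cleanly; since $E = \Fix J_A - \lambda_2 u^*$ and $F = \Fix J_A + \lambda_1 u^*$ are mere translates of the single set $\Fix J_A$, the combination collapses to $\Fix J_A$ itself, and the argument is unambiguous.
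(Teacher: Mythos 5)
Your treatment of parts (i) and (ii) is correct and is the same as the paper's: the maps $T_{1}\colon E\to S$, $T_{2}\colon F\to S$ and $T\colon S\to \Fix J_{A}$ of Theorems~\ref{StoJ} and \ref{monday} (equivalently $H_{1},H_{2}$ of Theorem~\ref{plumber}) are bijections, so nonemptiness and being a singleton propagate along the chain, and $S^*$ is tied in via Fact~\ref{key1} or via $S^*=(R-\Id)(S)$ from Theorem~\ref{evening}\ref{evening5}.

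Part (iii), however, has a genuine gap at exactly the step you declared ``unambiguous''. From $\Fix J_{A}=E+\lambda_{2}u^*$ and $\Fix J_{A}=F-\lambda_{1}u^*$ you correctly obtain the cancellation
\begin{equation*}
\lambda_{1}E+\lambda_{2}F=\lambda_{1}\Fix J_{A}+\lambda_{2}\Fix J_{A},
\end{equation*}
but the final collapse $\lambda_{1}\Fix J_{A}+\lambda_{2}\Fix J_{A}=\Fix J_{A}$ is not automatic. For a general set $C$ one only has $C\subseteq \lambda_{1}C+\lambda_{2}C$; the reverse inclusion says precisely that $C$ is closed under the convex combination with weights $(\lambda_{1},\lambda_{2})$, i.e.\ it is a convexity property. ``Being translates of a single set'' does not help: if $C=\{0,1\}$ and $\lambda_{1}=\lambda_{2}=1/2$, then $\lambda_{1}C+\lambda_{2}C=\{0,1/2,1\}\neq C$. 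So, as written, your argument proves only the inclusion $\Fix J_{A}\subseteq \lambda_{1}E+\lambda_{2}F$. The missing ingredient --- and the one the paper invokes at this exact point --- is that $\Fix J_{A}$ is convex, which is Theorem~\ref{evening}\ref{evening80} (proved there via Rockafellar's sum theorem; alternatively, the fixed point set of a nonexpansive self-map of a Hilbert space is always convex). Once you cite that, your computation becomes the paper's proof verbatim.
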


\begin{proof} (i) and (ii) follow from Theorems \ref{StoJ}, \ref{monday} and Theorem~\ref{plumber}.
It remains to prove (iii). By Theorem~\ref{plumber},
$$\Fix J_{A}=E+\lambda_{2}u^*, \quad \Fix J_{A}=F-\lambda_{1}u^*.$$
As $\Fix J_{A}$ is convex by Theorem~\ref{evening}\ref{evening80}, we obtain
$$\Fix J_{A}=\lambda_{1}\Fix J_{A}+\lambda_{2}\Fix J_{A}=\lambda_{1}(E+\lambda_{2}u^*)+\lambda_{2}
(F-\lambda_{1}u^*)= \lambda_{1}E+\lambda_{2} F,$$
as claimed.
\end{proof}

\subsection{The case when $\Fix J_{A_{1}/\lambda_2}\cap \Fix J_{A_{2}/\lambda_{1}}\neq\varnothing$}
Note that
\begin{equation}
\label{thesameset}
\Fix J_{A_{1}}=\Fix J_{A_{1}/\lambda_{2}},\quad  \Fix J_{A_{2}}
=\Fix J_{A_{2}/\lambda_{1}}.
\end{equation}

\begin{theorem}
\label{t:thesameset}
Assume that $\Fix J_{A_{1}/\lambda_{2}}\cap \Fix J_{A_{2}/\lambda_{1}}\neq\varnothing$.
Let $\lambda_{1}+\lambda_{2}=1$ with $\lambda_{i}> 0$. Then
\begin{align*}
\Fix(\lambda_1 J_{A_{1}}+\lambda_{2}J_{A_{2}}) & =\Fix J_{A_{1}}\cap \Fix J_{A_{2}}=  \Fix J_{A_{1}/\lambda_{2}}\cap \Fix J_{A_{2}/\lambda_{1}}\\
    & =\Fix(J_{A_{1}/\lambda_{2}}\circ J_{A_{2}/\lambda_1})=\Fix(J_{A_{2}/\lambda_{1}}\circ J_{A_{1}/\lambda_2}).
\end{align*}
\end{theorem}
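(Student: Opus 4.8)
The plan is to reduce the whole statement to two applications of Fact~\ref{onesetonly}, once to the unscaled pair $(J_{A_1},J_{A_2})$ and once to the scaled pair $(J_{A_1/\lambda_2},J_{A_2/\lambda_1})$, and then to glue the two conclusions together using the identity \eqref{thesameset}.

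First I would check that all four resolvents in sight are \emph{attracting}, so that Fact~\ref{onesetonly} is applicable. Since $A_1,A_2$ are maximal monotone and $\lambda_1,\lambda_2>0$, the scaled operators $A_1/\lambda_2$ and $A_2/\lambda_1$ are again maximal monotone. Hence, by Fact~\ref{elementarym}(ii), each of $J_{A_1}$, $J_{A_2}$, $J_{A_1/\lambda_2}$, $J_{A_2/\lambda_1}$ is firmly nonexpansive; by Fact~\ref{bruckreich}(i) each is strongly nonexpansive; and since the class of attracting maps contains the strongly nonexpansive ones (the remark following Fact~\ref{onesetonly}), each is attracting.

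Next, the hypothesis $\Fix J_{A_1/\lambda_2}\cap\Fix J_{A_2/\lambda_1}\neq\emp$ combined with \eqref{thesameset} delivers both $\Fix J_{A_1}\cap\Fix J_{A_2}\neq\emp$ and $\Fix J_{A_1/\lambda_2}\cap\Fix J_{A_2/\lambda_1}\neq\emp$, so the nonemptiness requirement of Fact~\ref{onesetonly} is met for each pair. Applying Fact~\ref{onesetonly} to $(T_1,T_2)=(J_{A_1},J_{A_2})$ with weights $\lambda_1,\lambda_2$ gives $\Fix(\lambda_1 J_{A_1}+\lambda_2 J_{A_2})=\Fix J_{A_1}\cap\Fix J_{A_2}$, and \eqref{thesameset} rewrites the right-hand side as $\Fix J_{A_1/\lambda_2}\cap\Fix J_{A_2/\lambda_1}$. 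Applying Fact~\ref{onesetonly} a second time to $(T_1,T_2)=(J_{A_1/\lambda_2},J_{A_2/\lambda_1})$ yields $\Fix J_{A_1/\lambda_2}\cap\Fix J_{A_2/\lambda_1}=\Fix(J_{A_1/\lambda_2}\circ J_{A_2/\lambda_1})=\Fix(J_{A_2/\lambda_1}\circ J_{A_1/\lambda_2})$. Chaining these equalities produces exactly the displayed string of five equal sets.

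Once the attracting property is in place the argument is pure bookkeeping, so I expect no serious obstacle; the one point demanding care is invoking Fact~\ref{onesetonly} with the right pair each time—the unscaled resolvents for the first equality $\Fix(\lambda_1 J_{A_1}+\lambda_2 J_{A_2})=\Fix J_{A_1}\cap\Fix J_{A_2}$ and the scaled resolvents for the composition equalities. The averaged map in the theorem is built from the unscaled $J_{A_i}$ while the compositions use the scaled $J_{A_i/\lambda_j}$, and the bridge between the two is precisely \eqref{thesameset}, which identifies their fixed point sets.
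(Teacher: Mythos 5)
Your proposal is correct and follows essentially the same route as the paper: the paper likewise transfers the nonemptiness hypothesis through \eqref{thesameset} (noting $\Fix J_{A_i}=\Fix J_{A_i/\lambda_j}$), observes that resolvents are attracting, and invokes Fact~\ref{onesetonly} on both the unscaled and the scaled pairs. Your write-up merely makes explicit the two applications of Fact~\ref{onesetonly} and the firmly-nonexpansive $\Rightarrow$ strongly nonexpansive $\Rightarrow$ attracting chain that the paper compresses into one sentence.
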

\begin{proof}
In view of \eqref{thesameset},
$\Fix J_{A_{1}}\cap \Fix J_{A_{2}} \neq\varnothing.$
Since every resolvent is attracting, it suffices to apply Fact~\ref{onesetonly}.
\end{proof}

\section{Minimizers of the proximal average}\label{proxcomp}

We now specialize our results to
$A_{1}=\partial f_{1}$ and $A_{2}=\partial f_{2}$ for
two proper lower semicontinuous convex functions
$f_{1},f_{2}$.
This allows us to understand the results
of Section~\ref{resolvcomp} from the variational analysis perspective.
Let $f_{1},f_{2}\in\GX(\HH)$ and $\lambda_{1}+\lambda_{2}=1$
with each $\lambda_{i}>0$.

\subsection{Minimization problem formulations and their common Fenchel-Rockafellar
dual}
We consider
\begin{equation}\label{mixed}
\min_{(x,y)}\  g(x,y):=\bigg(\frac{f_{1}(x)}{\lambda_{2}}+\frac{f_{2}(y)}{\lambda_{1}}+\frac{\|x-y\|^2}{2}\bigg).
\end{equation}
This turns out to be closely related to the
proximal average of $f_{1},f_{2}$,
recently studied in \cite{Baus08,lucet,Baus09,moffat}.
Recall
\begin{equation}\label{ura1}
(\forall z\in X)\quad \pflm(z)=
\inf_{z=\lambda_{1}x+\lambda_{2}y }\big(
\lambda_{1}f_1(x)+\lambda_{2}f_{2}(y)+
\tfrac{\lambda_{1}\lambda_{2}}{2\gamma}\|x-y\|^2\big),
\end{equation}
where $ \fettf = (f_1,f_{2}), \fettla = (\lambda_1,\lambda_2), \gamma>0$.
When $\gamma=1$, we just write $\pfl$.
Therefore, \eqref{mixed} has the same minimum value as the scaled proximal average
\begin{equation}\label{ura2}
\min_{z} \frac{\pfl(z)}{\lambda_{1}\lambda_{2}}.
\end{equation}
In terms of Moreau envelopes,
\eqref{mixed} can be reformulated as
\begin{equation}\label{meeting1}
\min_{x}\ g_{1}(x):=f_{1}(x)/\lambda_{2}+e_{1}(f_{2}/\lambda_{1})(x)
\end{equation}
and
\begin{equation}\label{meeting2}
\min_{y}\ g_{2}(y):=e_{1}(f_{1}/\lambda_{2})(y)+f_{2}(y)/\lambda_{1}.
\end{equation}
With regard to \eqref{ura2}, we also consider
\begin{equation}\label{yong}
\min_{z}\ g_{3}(z):=\lambda_{1}e_{1}f_{1}(z)+\lambda_{2}e_{1}f_{2}(z)=\min_{z} \ \lambda_{1}\lambda_{2}
\bigg(\frac{e_{1}f_{1}(z)}{\lambda_{2}}+\frac{e_{1}f_{2}(z)}{\lambda_{1}}\bigg).
\end{equation}

The following facts about proximal average will be useful.

\begin{fact}\label{proxpaper}
\begin{enumerate}
\item \emph{(See \cite[Theorem 4.10]{Baus08}.)}
For every $z\in\dom\pflm$,
there exist $x\in\dom f_{1}, y\in \dom f_{2}$ such that
$z=\lambda_{1}x+\lambda_{2}y$ and $$\pflm(z) =
\lambda_{1}f_{1}(x)+\lambda_{2}f_{2}(y)+
\frac{\lambda_{1}\lambda_{2}\|x-y\|^2}{2\gamma}.$$
\item \emph{(See \cite[Theorem 6.2]{Baus08}.)}
$e_{\gamma}\pflm = \lambda_1 e_{\gamma} f_1 + \lambda_2 e_\gamma f_2$.
\item \emph{(See \cite[Theorem 6.7]{Baus08}.)}
$\prox_{\pfl}=\lambda_{1}\prox_{f_{1}}+\lambda_{2}\prox_{f_{2}}$.
\end{enumerate}
\end{fact}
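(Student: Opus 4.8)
The three assertions are all structural properties of the proximal average $\pflm$, so the plan is to prove them in the order (i)$\to$(ii)$\to$(iii), extracting each later item from features exposed by the earlier one. For (i), I would establish attainment of the infimum defining $\pflm(z)$ by reducing to a single free variable. Since $\lambda_1+\lambda_2=1$, the constraint $z=\lambda_1x+\lambda_2y$ lets one write $x=z+\lambda_2 d$ and $y=z-\lambda_1 d$ with $d:=x-y$, so the problem becomes the minimization over $d\in\HH$ of
$$\varphi(d):=\lambda_1 f_1(z+\lambda_2 d)+\lambda_2 f_2(z-\lambda_1 d)+\frac{\lambda_1\lambda_2}{2\gamma}\|d\|^2.$$
This $\varphi$ is proper (because $z\in\dom\pflm$), convex, and lower semicontinuous. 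Each $f_i\in\GX(\HH)$ admits a continuous affine minorant $f_i(u)\geq\scal{a_i}{u}+b_i$, so the first two terms are bounded below by an affine function of $d$, while the quadratic term forces $\varphi(d)\to+\infty$ as $\|d\|\to\infty$; hence $\varphi$ is coercive. Coercivity together with weak lower semicontinuity of a proper convex function on a Hilbert space yields a minimizer $d^*$, and $x=z+\lambda_2 d^*$, $y=z-\lambda_1 d^*$ give the desired representation, with $x\in\dom f_1$ and $y\in\dom f_2$ forced by finiteness of $\varphi(d^*)$.

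For (ii), I would substitute the definition of $\pflm$ into the Moreau envelope and collapse the two nested infima, the minimization over $z$ absorbing the constraint, to obtain
$$e_\gamma\pflm(w)=\inf_{x,y}\Big(\lambda_1 f_1(x)+\lambda_2 f_2(y)+\frac{\lambda_1\lambda_2}{2\gamma}\|x-y\|^2+\frac{1}{2\gamma}\big\|w-\lambda_1 x-\lambda_2 y\big\|^2\Big).$$
The crux is the purely algebraic quadratic identity
$$\frac{\lambda_1\lambda_2}{2\gamma}\|x-y\|^2+\frac{1}{2\gamma}\|w-\lambda_1 x-\lambda_2 y\|^2=\frac{\lambda_1}{2\gamma}\|w-x\|^2+\frac{\lambda_2}{2\gamma}\|w-y\|^2,$$
valid whenever $\lambda_1+\lambda_2=1$; expanding both sides, the cross terms $\scal{x}{y}$ cancel and the coefficients of $\|x\|^2$ and $\|y\|^2$ collapse to $\lambda_1$ and $\lambda_2$. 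Substituting this identity decouples the objective into an $x$-part and a $y$-part, so the joint infimum splits and delivers $e_\gamma\pflm=\lambda_1 e_\gamma f_1+\lambda_2 e_\gamma f_2$. The only point needing care is the interchange collapsing the infima, which is legitimate since it merely fuses two successive minimizations over free variables into one.

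For (iii), I would differentiate (ii) at $\gamma=1$. Because $f_1,f_2$ and their proximal average all lie in $\GX(\HH)$, each envelope $e_1 g$ is Fréchet differentiable with $\nabla e_1 g=\Id-\prox_g$ (Moreau's decomposition). Taking gradients in $e_1\pfl=\lambda_1 e_1 f_1+\lambda_2 e_1 f_2$ gives $\nabla e_1\pfl=\lambda_1\nabla e_1 f_1+\lambda_2\nabla e_1 f_2$, whence
$$\prox_{\pfl}=\Id-\nabla e_1\pfl=\lambda_1(\Id-\nabla e_1 f_1)+\lambda_2(\Id-\nabla e_1 f_2)=\lambda_1\prox_{f_1}+\lambda_2\prox_{f_2},$$
the last step distributing $\Id$ via $\lambda_1+\lambda_2=1$. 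The main obstacle across the three parts is really just the attainment argument in (i); parts (ii) and (iii) reduce, respectively, to the quadratic identity and to the differentiability of the Moreau envelope, both routine once the setup is in place.
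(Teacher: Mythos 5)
The paper itself offers no proof of this Fact: each item is quoted from the literature with pointers to Theorems 4.10, 6.2 and 6.7 of \cite{Baus08}. So your self-contained argument is by necessity a different route, and items (i) and (ii) of it are correct. The substitution $x=z+\lambda_2 d$, $y=z-\lambda_1 d$ is a genuine bijection between $\HH$ and the feasible pairs in \eqref{ura1}; coercivity of $\varphi$ follows from continuous affine minorants of $f_1,f_2$ plus the quadratic term; and weak compactness of bounded sets in $\HH$ together with weak lower semicontinuity of proper lsc convex functions gives attainment, with $x\in\dom f_1$, $y\in\dom f_2$ forced by finiteness. Your quadratic identity in (ii) is also correct: using $\lambda_1+\lambda_2=1$, both sides, multiplied by $2\gamma$, expand to $\lambda_1\|x\|^2+\lambda_2\|y\|^2+\|w\|^2-2\lambda_1\scal{w}{x}-2\lambda_2\scal{w}{y}$, and fusing the two nested infima into a single infimum over $(x,y)$ is legitimate since both equal the infimum over all triples $(z,x,y)$ satisfying the constraint.

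The gap is in (iii). You invoke Fact~\ref{envelope} (Fr\'echet differentiability of $e_1g$ with $\nabla e_1 g=\Id-\prox_g$) for $g=\pfl$, and you also need $\prox_{\pfl}=J_{\partial \pfl}$ to be single-valued with full domain; both require $\pfl\in\GX(\HH)$, i.e.\ that the proximal average is proper, \emph{convex and lower semicontinuous}. You assert this in passing but never prove it, and it is not routine: $\pfl$ is defined as an infimal projection, and marginal functions of lsc functions are in general not lsc (this well-posedness statement is itself one of the basic theorems of \cite{Baus08}). The gap is patchable with tools you already set up. Convexity holds because $\pfl$ is the image of the jointly convex function $(x,y)\mapsto\lambda_1f_1(x)+\lambda_2f_2(y)+\tfrac{\lambda_1\lambda_2}{2}\|x-y\|^2$ under the linear map $(x,y)\mapsto\lambda_1x+\lambda_2y$; properness follows from your affine-minorant bound, which shows $\pfl>-\infty$ everywhere. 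Lower semicontinuity follows by rerunning the argument of (i): if $z_n\to z$ and (after passing to a subsequence realizing the liminf) $\sup_n\pfl(z_n)<\infty$, the minimizing vectors $d_n$ produced by (i) are bounded by coercivity, so along a further subsequence $d_n\rightharpoonup d$; then weak lsc of $f_1$, $f_2$ and of the norm gives $\pfl(z)\le\varphi_z(d)\le\liminf_n\pfl(z_n)$. With that lemma inserted before (iii), your differentiation of the envelope identity at $\gamma=1$ correctly yields $\prox_{\pfl}=\lambda_1\prox_{f_1}+\lambda_2\prox_{f_2}$.
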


\begin{fact}
\label{envelope}
Let $f\in\GX(\HH)$ and $\gamma\in (0,+\infty)$. Then
\begin{enumerate}
\item $e_{\gamma}f$ is Fr\'echet differentiable on $\HH$ and
$\nabla (e_{\gamma}f)
=\Yosida{\gamma}{(\partial f)}
=(\Id-\prox_{\gamma f})/\gamma$.
\item $\inf (e_{\gamma}f)=\inf f$
and $\argmin (e_{\gamma}f)=\argmin f$.
\end{enumerate}
\end{fact}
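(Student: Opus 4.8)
The final statement to prove is Fact~\ref{envelope}, which collects two standard properties of the Moreau envelope $e_\gamma f$ of a function $f \in \GX(\HH)$.

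For part (i), the plan is to establish Fr\'echet differentiability of $e_\gamma f$ together with the gradient formula $\nabla(e_\gamma f) = \Yosida{\gamma}{(\partial f)} = (\Id - \prox_{\gamma f})/\gamma$. First I would recall that since $f \in \GX(\HH)$, the proximal mapping $\prox_{\gamma f} = J_{\gamma \partial f}$ is everywhere defined, single-valued, and firmly nonexpansive (by Fact~\ref{elementarym}(ii)--(iii), using that $\partial f$ is maximal monotone). The infimum in the definition of $e_\gamma f(x)$ is attained uniquely at $y = \prox_{\gamma f}(x)$, so that
$$
e_\gamma f(x) = f(\prox_{\gamma f}(x)) + \tfrac{1}{2\gamma}\|x - \prox_{\gamma f}(x)\|^2.
$$
The cleanest route to differentiability is to show directly that the mapping $x \mapsto (\Id - \prox_{\gamma f}(x))/\gamma$ serves as the gradient. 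I would verify the first-order optimality relation $x - \prox_{\gamma f}(x) \in \gamma \partial f(\prox_{\gamma f}(x))$, then use the subgradient inequality for $f$ together with the firm nonexpansiveness of $\prox_{\gamma f}$ to bound the first-order remainder of $e_\gamma f$. Concretely, writing $p = \prox_{\gamma f}(x)$ and $q = \prox_{\gamma f}(x')$, the identity $\Yosida{\gamma}{(\partial f)} = (\Id - J_{\gamma\partial f})/\gamma$ from \eqref{regular} identifies the candidate gradient as the Yosida approximation, which is $(1/\gamma)$-Lipschitz (being a scaled composition of nonexpansive maps). A Lipschitz continuous candidate gradient that satisfies the convexity subgradient inequality on both sides forces Fr\'echet differentiability. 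The main subtlety here is passing from mere continuity of the candidate gradient to genuine Fr\'echet (not just G\^ateaux) differentiability; I expect this to be the technical heart, and it is handled by the quadratic upper and lower estimates coming from firm nonexpansiveness of $\prox_{\gamma f}$.

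For part (ii), I would argue that $\inf(e_\gamma f) = \inf f$ and $\argmin(e_\gamma f) = \argmin f$ by comparing infima directly. Since adding a nonnegative quadratic term can only increase the objective, $e_\gamma f(x) \le f(x)$ for all $x$, giving $\inf(e_\gamma f) \le \inf f$; conversely, for any $x$ and the optimal $y = \prox_{\gamma f}(x)$ we have $e_\gamma f(x) = f(y) + \tfrac{1}{2\gamma}\|x-y\|^2 \ge f(y) \ge \inf f$, so $\inf(e_\gamma f) \ge \inf f$, and equality follows. For the $\argmin$ statement, I would use the gradient formula from part (i): $x \in \argmin(e_\gamma f)$ iff $\nabla(e_\gamma f)(x) = 0$ iff $x = \prox_{\gamma f}(x)$, which by Fact~\ref{elementarym}(i) is equivalent to $0 \in \partial f(x)$, i.e.\ $x \in \argmin f$.

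The anticipated main obstacle is entirely in part (i), namely upgrading differentiability from the G\^ateaux to the Fr\'echet level and pinning down the gradient as exactly $\Yosida{\gamma}{(\partial f)}$. Both tasks are driven by the firm nonexpansiveness of $\prox_{\gamma f}$: it yields the key inequality $\|\prox_{\gamma f}(x) - \prox_{\gamma f}(x')\|^2 \le \langle \prox_{\gamma f}(x) - \prox_{\gamma f}(x'),\, x - x'\rangle$, from which the Lipschitz bound on the candidate gradient and the symmetric quadratic remainder estimates both follow. Part (ii) is then essentially immediate given part (i) and Fact~\ref{elementarym}(i).
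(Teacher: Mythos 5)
Your proposal is correct, but it differs from the paper in an unusual way: the paper does not prove Fact~\ref{envelope} at all --- its ``proof'' is a pointer to the literature (Combettes--Wajs, Moreau, and Rockafellar--Wets), whereas you supply the actual standard argument that underlies those citations. Your outline is sound: with $p=\prox_{\gamma f}(x)$, the upper estimate $e_\gamma f(x')\le f(p)+\tfrac{1}{2\gamma}\|x'-p\|^2$ comes for free from the infimum defining the envelope, the optimality relation $x-p\in\gamma\partial f(p)$ plus the subgradient inequality gives the matching lower estimate, and together they sandwich the first-order remainder by $O(\|x'-x\|^2)$, which is exactly Fr\'echet (not merely G\^ateaux) differentiability with gradient $(\Id-\prox_{\gamma f})/\gamma$; part (ii) then follows as you say from Fermat's rule and Fact~\ref{elementarym}(i). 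What your self-contained route buys is independence from external references and an explicit quadratic remainder bound; what the paper's citation buys is brevity, since these are classical facts. Two small corrections to your write-up: first, the justification that the candidate gradient is $(1/\gamma)$-Lipschitz ``being a scaled composition of nonexpansive maps'' is a slip --- $\Id-\prox_{\gamma f}$ is a \emph{difference}, not a composition, and a difference of nonexpansive maps is in general only $2$-Lipschitz; the correct reason is that $\Id-T$ is itself firmly nonexpansive whenever $T$ is, which is precisely the firm nonexpansiveness you invoke elsewhere, so nothing breaks. Second, the $\argmin$ identity in (ii) does not actually need part (i): if $x$ minimizes $e_\gamma f$ and $y=\prox_{\gamma f}(x)$, then $f(y)+\tfrac{1}{2\gamma}\|x-y\|^2=\inf e_\gamma f=\inf f\le f(y)$ forces $x=y$ and $f(x)=\inf f$, and the converse inclusion is immediate from $e_\gamma f\le f$ and $\inf e_\gamma f=\inf f$; your route through the gradient formula is equally valid, just less elementary.
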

\begin{proof}
See \cite[Lemma~2.5]{patrick2} and \cite{moreau}.
See also \cite[Example~1.46 and Theorem~2.26]{Rock98}.
\end{proof}

\begin{proposition}\label{onlyoneneeded}
For the minimization problems given by
\eqref{mixed}--\eqref{yong}, we have
$$\min_{(x,y)} g(x,y)=\min_{z}\frac{\pfl(z)}{\lambda_{1}\lambda_{2}}
=\min_{x}g_{1}(x)=\min_y g_{2}(y)
=\min_{z}\frac{g_{3}(z)}{\lambda_{1}\lambda_{2}}.$$
\end{proposition}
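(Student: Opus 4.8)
The plan is to show that all five minimization problems in Proposition~\ref{onlyoneneeded} share a common optimal value by establishing a chain of equalities, linking each problem to the scaled proximal average $\pfl/(\lambda_{1}\lambda_{2})$. The cleanest route is to reduce everything to the master problem \eqref{mixed}, namely $\min_{(x,y)} g(x,y)$, and to show that the other four quantities all equal its infimum.

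First I would handle the equality $\min_{(x,y)} g(x,y)=\min_{z}\pfl(z)/(\lambda_{1}\lambda_{2})$. Starting from the definition \eqref{ura1} with $\gamma=1$, I would multiply through by the constant $1/(\lambda_{1}\lambda_{2})$ to rewrite $\pfl(z)/(\lambda_{1}\lambda_{2})$ as an infimum over decompositions $z=\lambda_{1}x+\lambda_{2}y$ of $f_{1}(x)/\lambda_{2}+f_{2}(y)/\lambda_{1}+\tfrac{1}{2}\|x-y\|^{2}$, which is exactly $g(x,y)$. Taking the infimum first over the constraint surface and then over $z$ amounts to an unconstrained infimum over all $(x,y)$, since every pair $(x,y)$ determines a unique $z=\lambda_{1}x+\lambda_{2}y$ and conversely. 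This double-infimum rearrangement is the conceptual heart of the first equality.

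Next I would treat the two \emph{partial-minimization} reformulations \eqref{meeting1} and \eqref{meeting2}. For $g_{1}$, I would fix $x$ and minimize $g(x,y)$ over $y$: the $y$-dependent part is $f_{2}(y)/\lambda_{1}+\tfrac{1}{2}\|x-y\|^{2}$, whose infimum over $y$ is precisely the Moreau envelope $e_{1}(f_{2}/\lambda_{1})(x)$ by the definition of $e_{\gamma}$ with $\gamma=1$. Hence $\inf_{y}g(x,y)=g_{1}(x)$, and minimizing over $x$ gives $\min_{x}g_{1}(x)=\min_{(x,y)}g(x,y)$; the argument for $g_{2}$ is symmetric in the roles of $x$ and $y$. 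Finally, for $g_{3}$ I would invoke Fact~\ref{proxpaper}(ii), which gives $e_{1}\pfl=\lambda_{1}e_{1}f_{1}+\lambda_{2}e_{1}f_{2}=g_{3}$; combining this with Fact~\ref{envelope}(ii), which asserts $\inf(e_{1}h)=\inf h$ for any $h\in\GX(\HH)$, yields $\min_{z}g_{3}(z)=\inf e_{1}\pfl=\inf\pfl$, and dividing by $\lambda_{1}\lambda_{2}$ matches the second quantity in the chain.

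I expect the main obstacle to be bookkeeping rather than depth: the normalizing factors $1/\lambda_{2}$, $1/\lambda_{1}$, and $1/(\lambda_{1}\lambda_{2})$ must be tracked consistently so that the scaled proximal average, the Moreau envelopes, and the master functional $g$ line up exactly. A subtle point worth checking is that the infima are genuinely attained (so that ``$\min$'' is justified); this is where Fact~\ref{proxpaper}(i) is useful, since it guarantees that for $z\in\dom\pfl$ an optimal decomposition $z=\lambda_{1}x+\lambda_{2}y$ exists, securing the passage between the constrained and partially-minimized formulations. The only real content beyond definition-chasing is the identification $e_{1}\pfl=g_{3}$ from Fact~\ref{proxpaper}(ii), which does the work of connecting the envelope-based problem \eqref{yong} to the proximal average.
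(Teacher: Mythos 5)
Your proof is correct and follows essentially the same route as the paper: the paper dismisses the first three equalities as ``immediate'' (your double-infimum and partial-minimization arguments are exactly the intended justification), and for the last equality it invokes precisely the same two ingredients you do, namely Fact~\ref{proxpaper}(ii) and Fact~\ref{envelope}(ii). Your worry about attainment is harmless but unnecessary, since the equalities hold at the level of infima and the paper's ``$\min$'' is to be read in that sense.
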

\begin{proof} While the first three equalities are immediate,
the fourth one follows from Fact~\ref{proxpaper}(ii)
and Fact~\ref{envelope}(ii).
\end{proof}

The following result is a convex-function refinement
of Theorem~\ref{operator}. It says that
convex optimization problems~\eqref{mixed}, \eqref{meeting1}, 
\eqref{meeting2}, \eqref{yong} 
share one common Fenchel-Rockafellar dual problem.

\begin{theorem}\label{solongso} Up to a `$-$' sign change in
the dual variable, the following problems have
the same Fenchel dual.
\begin{enumerate}
\item
$$
\min_{(x,y)}\ \bigg(\frac{f_{1}(x)}{\lambda_{2}}+\frac{f_{2}(y)}{\lambda_{1}}+\frac{\|x-y\|^2}{2}\bigg).
$$
\item
$$
\min_{x}\ f_{1}(x)/\lambda_{2}+e_{1}(f_{2}/\lambda_{1})(x).
$$
\item
$$
\min_{y}\ e_{1}(f_{1}/\lambda_{2})(y)+f_{2}(y)/\lambda_{1}.
$$

\item
$$
\min_{z}\ \frac{e_{1}f_{1}(z)}{\lambda_{2}}+\frac{e_{1}f_{2}(z)}{\lambda_{1}}.
$$
\end{enumerate}
Namely, up to a `$-$' sign change in the dual variable,
their Fenchel-Rockafellar
dual is given by
\begin{align}
(D)\quad &\max_{\phi}\left[-\bigg(\frac{f_{1}}{\lambda_{2}}\bigg)^*(-\phi)-\bigg(\frac{f_{2}}{\lambda_{1}}\bigg)^*(\phi)-
\frac{\|\phi\|^{2}}{2}\right] \label{dualprob}\\
&=-\min_{\phi}\left[\bigg(\frac{f_{1}}{\lambda_{2}}\bigg)^*(-\phi)+
\bigg(\frac{f_{2}}{\lambda_{1}}\bigg)^*(\phi)+
\frac{\|\phi\|^{2}}{2}\right].\nonumber
\end{align}
\end{theorem}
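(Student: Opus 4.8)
The plan is to show that all four minimization problems listed in Theorem~\ref{solongso} have the same Fenchel-Rockafellar dual, given by \eqref{dualprob}. The strategy is to compute the Fenchel dual of each problem using Fact~\ref{duality} and verify they coincide (up to the sign change already flagged in the statement). First I would reduce problems (ii)--(iv) to problem (i) at the level of objective values. By Proposition~\ref{onlyoneneeded} the four minimization problems already share the same optimal value, but more is needed: I want to exhibit each as an instance of $f + g\circ L$ so that Fact~\ref{duality} applies directly. The cleanest route is to treat (i) as the master problem and compute its dual, then show each of (ii)--(iv) is either literally (i) (after partial minimization) or produces the same dual.

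For problem (i), I would set $L:\HH\to\HH\times\HH:x\mapsto(x,x)$ is \emph{not} quite right because the two arguments $x,y$ are independent; instead the natural splitting is to view the objective as $F(x,y)+G(x,y)$ where $F(x,y)=f_1(x)/\lambda_2+f_2(y)/\lambda_1$ and $G(x,y)=\tfrac12\|x-y\|^2$. Writing $\|x-y\|^2/2 = j(L(x,y))$ with $L:\HH\times\HH\to\HH:(x,y)\mapsto x-y$ and $j=\|\cdot\|^2/2$, I apply Fact~\ref{duality} on $\HH\times\HH$ with $f=F$, $g=j$, and linear map $L$. Since $j$ is continuous everywhere and $F$ is proper (dom$\,F=\dom f_1\times\dom f_2$), the constraint qualification is automatic. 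The adjoint is $L^*\phi=(\phi,-\phi)$, and using $j^*=j$ together with the separability $F^*(u,v)=(f_1/\lambda_2)^*(u)+(f_2/\lambda_1)^*(v)$, the dual objective becomes $-\big[(f_1/\lambda_2)^*(-\phi)+(f_2/\lambda_1)^*(\phi)+\|\phi\|^2/2\big]$, which is exactly \eqref{dualprob}.

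Next I would handle (ii)--(iv) by recognizing each as a partial minimization or Moreau-envelope reformulation of (i). For (ii), partially minimizing $g(x,y)$ over $y$ gives $\inf_y\big(f_2(y)/\lambda_1+\|x-y\|^2/2\big)=e_1(f_2/\lambda_1)(x)$ by the definition of the Moreau envelope, so (ii) is the reduced form of (i); symmetrically (iii) is the reduction over $x$. Since partial minimization does not change the optimal value or the dual, these inherit the dual \eqref{dualprob}. For (iv), I would invoke Fact~\ref{proxpaper}(ii) (or directly the definition $e_1 f=j\,\infconv\,f$) to recognize that the objective in (iv) is an infimal-convolution-type expression whose Fenchel dual, computed via $(e_1 h)^*=h^*+j$, again collapses to \eqref{dualprob}; alternatively (iv) is seen to share the value and dual of (i) through Fact~\ref{proxpaper}(ii) combined with Proposition~\ref{onlyoneneeded}.

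\textbf{The main obstacle} I anticipate is bookkeeping the sign conventions and the $\widetilde{\cdot}$/conjugation identities cleanly so that the four duals literally match \eqref{dualprob} rather than some sign-twisted variant --- this is the reason the statement says ``up to a `$-$' sign change in the dual variable.'' In particular, for (iv) one must be careful that the dual variable entering $(f_1/\lambda_2)^*$ appears with a minus sign while the one entering $(f_2/\lambda_1)^*$ appears positively, which is precisely the asymmetry produced by $L^*\phi=(\phi,-\phi)$; tracking which problem naturally yields $+\phi$ versus $-\phi$ and reconciling them by the substitution $\phi\mapsto-\phi$ is the delicate part. Everything else reduces to routine conjugate calculus ($j^*=j$, separability of $F^*$, and the envelope-conjugate identity), all of which is justified by Fact~\ref{duality} and Fact~\ref{envelope} already recorded above.
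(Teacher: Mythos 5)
Your handling of (i) coincides with the paper's own proof: Fact~\ref{duality} applied to $f_1/\lambda_2\oplus f_2/\lambda_1$ and $j\circ L$ with $L\colon(x,y)\mapsto x-y$, yielding \eqref{dualprob}. Your route to (ii) and (iii) is different from the paper's --- the paper conjugates directly, using $[e_1(f_2/\lambda_1)]^*=(f_2/\lambda_1)^*+j$ --- but your marginalization argument is sound: the perturbation function of (ii), namely $(x,u)\mapsto f_1(x)/\lambda_2+e_1(f_2/\lambda_1)(x+u)$, is exactly the infimum over $y$ of the perturbation function of (i), and taking a partial infimum over a primal variable leaves the conjugate at zero in that coordinate, hence the dual objective, unchanged. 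So (i)--(iii) are fine.

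The genuine gap is in (iv). There the weights sit \emph{outside} the envelopes: the summands are $e_1f_1/\lambda_2$ and $e_1f_2/\lambda_1$, which are \emph{not} equal to $e_1(f_1/\lambda_2)$ and $e_1(f_2/\lambda_1)$, so the identity $(e_1h)^*=h^*+j$ cannot be applied to them as written. The paper handles this with the scaling rule for conjugates: $\big(e_1f_1/\lambda_2\big)^*=\tfrac{1}{\lambda_2}(f_1^*+j)(\lambda_2\cdot)=(f_1/\lambda_2)^*+\lambda_2 j$ and likewise $\big(e_1f_2/\lambda_1\big)^*=(f_2/\lambda_1)^*+\lambda_1 j$; the two dual quadratic terms then merge, $\lambda_2 j(-\phi)+\lambda_1 j(\phi)=j(\phi)$, precisely because $\lambda_1+\lambda_2=1$. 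If one conflates $e_1f_i/\lambda$ with $e_1(f_i/\lambda)$, each term contributes a full $j$ and the dual comes out with $\|\phi\|^2$ instead of $\|\phi\|^2/2$, which is not \eqref{dualprob}; so the step as you describe it would fail, and the weight-normalization is the real delicate point of (iv), not the sign bookkeeping you flagged. Your fallback argument is also invalid: Proposition~\ref{onlyoneneeded} and Fact~\ref{proxpaper}(ii) give equality of \emph{optimal values}, and equal optimal values do not imply that two problems have the same Fenchel dual as a problem in $\phi$ --- that identification is the content of the theorem and cannot be extracted from strong duality alone.
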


\begin{proof}
(i).
Using the Fenchel-Rockafellar Duality theorem
(Fact~\ref{duality}) for
$f_{1}/\lambda_{2} \oplus f_{2}/\lambda_{1}$,
$j\circ L$ with $j=\|\cdot\|^2/2$ and $L=(\Id, -\Id):\HH\times\HH\to\HH$,
we obtain the dual problem \eqref{dualprob}.

(ii). The Fenchel dual is given by
$$\sup_{\phi}-(f_{1}/\lambda_{2})^{*}(-\phi)-[e_{1}(f_{2}/\lambda_{1})]^{*}(\phi).$$
As
$[e_{1}(f_{2}/\lambda_{1})]^{*}=(f_{2}/\lambda_{1})^*+j,$ its Fenchel dual becomes
$$\sup_{\phi}-(f_{1}/\lambda_{2})^{*}(-\phi)-(f_{2}/\lambda_{1})^*(\phi)-\|\phi\|^2/2.$$
(iii). The Fenchel dual is
\begin{align*}
&\sup_{\phi}-[e_{1}(f_{1}/\lambda_{2})]^*(-\phi)-(f_{2}/\lambda_{1})^*(\phi)\\
&=\sup_{\phi}-(f_{1}/\lambda_{2})^*(-\phi)-\|-\phi\|^2/2-(f_{2}/\lambda_{1})^*(\phi)\\
&=\sup_{\phi}-(f_{1}/\lambda_{2})^*(-\phi)-(f_{2}/\lambda_{1})^*(\phi)-\|\phi\|^2/2.
\end{align*}
(iv). Its Fenchel dual is
\begin{equation}\label{paint}
\sup_{\phi}-\bigg(\frac{e_{1}f_{1}}{\lambda_{2}}\bigg)^*(-\phi)-
\bigg(\frac{e_{1}f_{2}}{\lambda_{1}}\bigg)^*(\phi).
\end{equation}
Now
\begin{align*}
\bigg(\frac{e_{1}f_{1}}{\lambda_{2}}\bigg)^* &=\frac{1}{\lambda_{2}}(e_{1}f_{1})^*(\lambda_{2}\cdot)
=\frac{1}{\lambda_{2}}(f_{1}^*+j)(\lambda_{2}\cdot)\\
&=\frac{1}{\lambda_{2}}f_{1}^*(\lambda_{2}\cdot)+\lambda_{2}j=\bigg(\frac{f_{1}}{\lambda_{2}}\bigg)^*
+\lambda_{2}j.
\end{align*}
Similarly,
$$\bigg(\frac{e_{1}f_{2}}{\lambda_{1}}\bigg)^*=\bigg(\frac{f_{2}}{\lambda_{1}}\bigg)^*+\lambda_{1}j.$$
Then \eqref{paint} becomes
\begin{align*}
& \sup_{\phi}-\bigg(\frac{f_{1}}{\lambda_{2}}\bigg)^*(-\phi)
-\lambda_{2}j(-\phi)-\bigg(\frac{f_{2}}{\lambda_{1}}\bigg)^*(\phi)-\lambda_{1}j(\phi)\\
&=\sup_{\phi}-\bigg(\frac{f_{1}}{\lambda_{2}}\bigg)^*(-\phi)-
\bigg(\frac{f_{2}}{\lambda_{1}}\bigg)^*(\phi)-\frac{\|\phi\|^2}{2}.
\end{align*}
The proof is complete.
\end{proof}

When the primal problem \eqref{mixed} has a finite infimum value,
the primal optimal value and the dual
optimal value are equal; moreover,
the dual optimal value is attained.

While the solution set of the primal problem \eqref{mixed}
(see Theorem~\ref{proxfunctions}(i))
may be empty, the solution set of the dual problem
\eqref{dualprob}
is nonempty and a singleton as long as \eqref{mixed}
has a finite infimum value.
This feature of Fenchel-Rockafellar duality is in stark contrast to the
Attouch-Th\'era duality of Section~\ref{resolvcomp}; see
Corollary~\ref{allsets}(i).

\begin{theorem} When the primal problem \eqref{mixed} has a finite infimum, the
dual (D) has a unique solution
\begin{equation}\label{cape1}
\bar{\phi} =\prox_{(f_{1}/\lambda_{2})^*\circ (-\Id)+(f_{2}/\lambda_{1})^{*}}(0).
\end{equation}
If
\begin{equation}\label{cq}
\dom (f_{1}/\lambda_{2})^*\cap -\inte \dom (f_{2}/\lambda_{2})^*\neq \varnothing \quad \text{or}\quad
\inte \dom (f_{1}/\lambda_{2})^*\cap -\dom (f_{2}/\lambda_{2})^*\neq\emp,
\end{equation}
 then
\begin{equation}\label{cape2}
\bar{\phi}=J_{\widetilde{\partial f_{1}/\lambda_{2}}+
(\partial f_{2}/\lambda_{1})^{-1}}(0)=-J_{(\partial f_{1}/\lambda_{2})^{-1}+
\widetilde{\partial f_{2}/\lambda_{1}}}(0).
\end{equation}
\end{theorem}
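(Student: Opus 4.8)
The plan is to recognize the common dual objective from Theorem~\ref{solongso} as a single proximal evaluation and then to identify its unique minimizer with the two resolvent expressions. Write $h:=(f_{1}/\lambda_{2})^{*}\circ(-\Id)+(f_{2}/\lambda_{1})^{*}$, so that the common Fenchel dual \eqref{dualprob} is $\min_{\phi}\big(h(\phi)+\tfrac{1}{2}\|\phi\|^{2}\big)$. Since $\tfrac{1}{2}\|\phi\|^{2}=\tfrac{1}{2}\|\phi-0\|^{2}=j(\phi-0)$, the very definition $\prox_{h}=J_{\partial h}$ shows that a minimizer of $h+j$ is precisely $\prox_{h}(0)$. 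Thus, once existence and uniqueness of the minimizer are secured, \eqref{cape1} is immediate.

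For existence and uniqueness I would apply Fenchel--Rockafellar duality (Fact~\ref{duality}) to the primal \eqref{mixed}, written with $f=f_{1}/\lambda_{2}\oplus f_{2}/\lambda_{1}$, $g=j$, and $L=(\Id,-\Id):\HH\times\HH\to\HH$. The decisive point is that the constraint qualification in Fact~\ref{duality} is automatic here: $j$ is finite and continuous on all of $\HH$, hence $g$ is continuous at every $Lx_{0}$, while $\dom f=\dom f_{1}\times\dom f_{2}\neq\emp$ because $f_{1},f_{2}$ are proper. Consequently, whenever the primal infimum is finite, the dual infimum is attained and equals $-\inf_{(x,y)}g(x,y)$; in particular $h(\bar\phi)$ is finite at the attaining point, so $h\in\GX(\HH)$. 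The objective $h+j$ is strictly convex since $j$ is, so the minimizer is unique. This yields \eqref{cape1}.

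To obtain the first equality in \eqref{cape2}, I would compute $\partial h$ and use $\bar\phi=\prox_{h}(0)=J_{\partial h}(0)$. With $h_{1}=(f_{1}/\lambda_{2})^{*}\circ(-\Id)$ and $h_{2}=(f_{2}/\lambda_{1})^{*}$, the rules $\partial(g^{*})=(\partial g)^{-1}$, the reflection chain rule $\partial(g\circ(-\Id))(\phi)=-(\partial g)(-\phi)$, and $\partial(f_{1}/\lambda_{2})=\partial f_{1}/\lambda_{2}$ give $\partial h_{1}(\phi)=-\big[\partial(f_{1}/\lambda_{2})^{*}\big](-\phi)=-(\partial f_{1}/\lambda_{2})^{-1}(-\phi)=\widetilde{\partial f_{1}/\lambda_{2}}(\phi)$ and $\partial h_{2}=(\partial f_{2}/\lambda_{1})^{-1}$. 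Since $\dom h_{1}=-\dom(f_{1}/\lambda_{2})^{*}$ and $\dom h_{2}=\dom(f_{2}/\lambda_{1})^{*}$, the hypothesis \eqref{cq} is exactly the Moreau--Rockafellar qualification $\dom h_{1}\cap\inte\dom h_{2}\neq\emp$ (or its symmetric variant) transported through the reflection $-\Id$. Hence the subdifferential sum rule applies and
$$\partial h=\partial h_{1}+\partial h_{2}=\widetilde{\partial f_{1}/\lambda_{2}}+(\partial f_{2}/\lambda_{1})^{-1},$$
so that $\bar\phi=J_{\widetilde{\partial f_{1}/\lambda_{2}}+(\partial f_{2}/\lambda_{1})^{-1}}(0)$.

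The second equality in \eqref{cape2} I would not reprove but read off from Theorem~\ref{evening}\ref{evening4} with $A_{1}=\partial f_{1}$, $A_{2}=\partial f_{2}$, $\gamma=1$: there $v^{*}=J_{\widetilde{\partial f_{1}/\lambda_{2}}+(\partial f_{2}/\lambda_{1})^{-1}}(0)$, $u^{*}=J_{(\partial f_{1}/\lambda_{2})^{-1}+\widetilde{\partial f_{2}/\lambda_{1}}}(0)$, and $u^{*}=-v^{*}$; thus $\bar\phi=v^{*}=-u^{*}$ is precisely the claimed identity. I expect the main obstacle to be the careful justification of the sum rule: one must verify that \eqref{cq} really is the Moreau--Rockafellar qualification for $h_{1}+h_{2}$ after passing through the reflection, and keep the sign bookkeeping in $\widetilde{(\cdot)}$, in $(\cdot)^{-1}$, and in the chain rule mutually consistent throughout.
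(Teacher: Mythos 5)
Your proposal is correct, and for the bulk of the statement it follows the paper's own route: existence of a dual solution via Fenchel--Rockafellar duality (Fact~\ref{duality}) with the constraint qualification automatically satisfied because $j$ is finite and continuous, uniqueness from strict convexity of the dual objective, Fermat's rule to identify $\bar{\phi}=\prox_{h}(0)$ where $h=(f_{1}/\lambda_{2})^{*}\circ(-\Id)+(f_{2}/\lambda_{1})^{*}$, and then the subdifferential sum rule under \eqref{cq} (together with $\partial (g^{*})=(\partial g)^{-1}$ and the reflection chain rule) to split $\partial h$ into $\widetilde{\partial f_{1}/\lambda_{2}}+(\partial f_{2}/\lambda_{1})^{-1}$, giving the first equality in \eqref{cape2}. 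Your care about properness of $h$ (finiteness at the attained dual solution) is a point the paper glosses over. The one place where you genuinely diverge is the second equality in \eqref{cape2}: the paper stays inside convex duality, rewriting the dual with the substitution $\psi=-\phi$ and repeating the same Fermat/sum-rule identification to get $\bar{\psi}=J_{(\partial f_{1}/\lambda_{2})^{-1}+\widetilde{\partial f_{2}/\lambda_{1}}}(0)$ with $\bar{\phi}=-\bar{\psi}$, whereas you import the Attouch--Th\'era symmetry of Theorem~\ref{evening}\ref{evening4} (with $A_{i}=\partial f_{i}$): since your first equality exhibits $\bar{\phi}$ as exactly the resolvent value defining $v^{*}$, that resolvent is well defined, hence so is $u^{*}$ and $u^{*}=-v^{*}$, which is the claimed identity. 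Both justifications are valid; the paper's is self-contained and elementary, while yours reuses the monotone-operator machinery of Section~\ref{resolvcomp} and makes explicit the identification $\bar{\phi}=v^{*}$ that the paper only records afterwards in Remark~\ref{r:blablabla}.
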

\begin{proof}
From \eqref{dualprob}, we have
$$
0\in \partial \big[(f_{1}/\lambda_{2})^*\circ (-\Id)+(f_{2}/\lambda_{1})^{*}\big](\bar{\phi})
+\bar{\phi},$$
so \eqref{cape1} holds.

Under the assumption \eqref{cq}, we can apply the chain rule so that
$$0\in -(\partial f_{1}/\lambda_{2})^{-1}(-\bar{\phi})+(\partial f_{2}/\lambda_{1})^{-1}(\bar{\phi})
+\bar{\phi}
=\widetilde{\partial f_{1}/\lambda_{2}}(\bar{\phi})+(\partial f_{2}/\lambda_{1})^{-1}(\bar{\phi})+\bar{\phi}.$$
Hence the first equality in \eqref{cape2} holds. Rewrite the dual problem \eqref{dualprob}
as
$$-\inf_{\psi}\left[\bigg(\frac{f_{1}}{\lambda_{2}}\bigg)^*(\psi)+
\bigg(\frac{f_{2}}{\lambda_{1}}\bigg)^*(-\psi)+
\frac{\|\psi\|^{2}}{2}\right]$$
and denote its optimal solution by $\bar{\psi}$. Then $-\bar{\psi}=\bar{\phi}$ and
$\bar{\psi}=J_{(\partial f_{1}/\lambda_{2})^{-1}+
\widetilde{\partial f_{2}/\lambda_{1}}}(0)$. Therefore, the second equality in \eqref{cape2} holds
also.
\end{proof}

\begin{remark}
\label{r:blablabla}
Note that \cite[Proposition~4.3]{Baus05} also implies \eqref{cape1}
as well as 
\begin{equation*}
\bar{\phi} = -\prox_{\left[ (f_2/\lambda_1)\Box
(f_1/\lambda_2\circ(-\Id))\right]^{**}}(0).
\end{equation*}
Observe that $\bar{\phi}=v^*$ as given in Theorem~\ref{evening}(v).
\end{remark}

\subsection{Characterization of minimizers}

Set
\begin{align*}
S &:=\menge{(x,y)}{x=\prox_{f_{1}/\lambda_{2}}y, \
y=\prox_{f_{2}/\lambda_{1}}x},\\
E&:=\Fix (\prox_{f_{1}/\lambda_{2}}\prox_{f_{2}/\lambda_{1}}),\\
F &:=\Fix(\prox_{f_{2}/\lambda_{1}}\prox_{f_{1}/\lambda_{2}}).
\end{align*}
\begin{theorem}\label{proxfunctions}
The following assertions hold.
\begin{enumerate}
\item \emph{({\bf Fixed points of alternating proximal mappings})}
$S=\argmin g\subseteq E\times F$.
\item \emph{({\bf Fixed points of proximal mapping composition})}
$E=\argmin g_{1}$.
\item \emph{({\bf Fixed points of proximal mapping composition})}
$F=\argmin g_{2}$.
\item\label{function1} \label{related2}
$\argmin\pfl=\argmin g_{3}=
\left[\lambda_{1}\Yosida{1}{(\partial f_{1})}+
\lambda_{2}\Yosida{1}{(\partial f_{2})}\right]^{-1}(0)$.
\item \emph{({\bf Fixed points of the average of proximal mappings})}
\label{function2}
\begin{align}
\argmin\pfl & =\menge{\lambda_{1}x+\lambda_{2}y}{(x,y)\in S}
\label{related1}\\
&=\Fix(\prox_{\pfl})=\Fix (\lambda_{1}\prox_{f_{1}}+\lambda_{2}\prox_{f_{2}}).\label{related3}
\end{align}
\item The sets $S,E,F,\Fix (\lambda_{1}\prox_{f_{1}}+\lambda_{2}\prox_{f_{2}})$ are closed and convex.
\end{enumerate}
\end{theorem}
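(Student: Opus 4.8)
The plan is to leverage the correspondence between the monotone-operator results of Section~\ref{resolvcomp} (specialized to $A_i = \partial f_i$) and the variational formulations just introduced. The key observation is that $\prox_{f_i} = J_{\partial f_i}$ and, by Fact~\ref{envelope}(i), $\nabla(e_1 f) = \Yosida{1}{(\partial f)}$. Thus each minimization problem in Theorem~\ref{solongso} has first-order optimality conditions that are exactly the inclusion problems~\eqref{sunday0}--\eqref{sunday3} with $A_i = \partial f_i$. Since the objectives are convex and $\GX(\HH)$ functions, stationarity is equivalent to global minimality, so the $\argmin$ sets coincide with the solution sets characterized in Theorem~\ref{evening}. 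The strategy is therefore to translate each assertion into its operator-theoretic counterpart and invoke the already-established results.

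\medskip

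First I would handle (iv) and (v), since they anchor everything. For (iv), the function $g_3$ from \eqref{yong} has gradient $\nabla g_3 = \lambda_1 \Yosida{1}{(\partial f_1)} + \lambda_2 \Yosida{1}{(\partial f_2)}$ by Fact~\ref{envelope}(i), so $\argmin g_3$ is precisely the zero set of this operator, which is the expression on the right. The equality $\argmin \pfl = \argmin g_3$ follows from Fact~\ref{proxpaper}(ii) combined with Fact~\ref{envelope}(ii): since $e_1 \pfl = \lambda_1 e_1 f_1 + \lambda_2 e_1 f_2 = g_3$ and taking Moreau envelopes preserves both infima and argmins, $\argmin \pfl = \argmin (e_1 \pfl) = \argmin g_3$. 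For (v), I would identify $\argmin \pfl$ with $\Fix J_A$ where $A = (\lambda_1 J_{A_1} + \lambda_2 J_{A_2})^{-1} - \Id$ via Theorem~\ref{evening}(i), and then use Fact~\ref{proxpaper}(iii) to write $\prox_{\pfl} = \lambda_1 \prox_{f_1} + \lambda_2 \prox_{f_2} = J_A$, giving \eqref{related3}. The equality \eqref{related1} follows from Theorem~\ref{StoJ} applied with $A_i = \partial f_i$, since $T(x,y) = \lambda_1 x + \lambda_2 y$ maps $S$ onto $\Fix J_A$.

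\medskip

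For (i)--(iii), I would verify that each objective's stationarity condition matches one of the inclusion problems. For (ii), $g_1(x) = f_1(x)/\lambda_2 + e_1(f_2/\lambda_1)(x)$ has $\partial g_1 = \partial f_1/\lambda_2 + \Yosida{1}{(A_2/\lambda_1)}$ by Fact~\ref{envelope}(i), whose zero set is the problem~\eqref{sunday2}, characterized as $E = \Fix(J_{A_1/\lambda_2} J_{A_2/\lambda_1})$ in Theorem~\ref{evening}(ii); the same argument handles (iii) via \eqref{sunday3} and Theorem~\ref{evening}(iii). For (i), the joint problem \eqref{mixed} is separately convex but genuinely jointly convex since $g(x,y)$ is a sum of convex functions of $(x,y)$, so the optimality condition is $(0,0) \in \partial g(x,y)$, which unpacks to $x = \prox_{f_1/\lambda_2} y$ and $y = \prox_{f_2/\lambda_1} x$, i.e., membership in $S$; the inclusion $S \subseteq E \times F$ is immediate from the definitions of $E$ and $F$. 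Finally, (vi) follows directly from Theorem~\ref{evening}(xi), since $\partial f_i$ is maximal monotone and all the sets in question are the sets $\Fix J_A, E, F, S$ of that theorem.

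\medskip

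\textbf{The main obstacle} I anticipate is the careful justification that stationarity equals global optimality in the joint problem (i): one must confirm that $g$ is proper (that its infimum is not $-\infty$ and its domain is nonempty) so that the subdifferential calculus applies and $(0,0) \in \partial g(x,y)$ genuinely characterizes minimizers. This is where the convexity and lower semicontinuity of $f_1, f_2$ enter essentially, and where one must be slightly careful that the $\argmin$ may be empty — in which case all the equalities hold vacuously with both sides empty, consistent with Corollary~\ref{allsets}(i). Everything else is a translation of the monotone-operator machinery, with Fact~\ref{envelope}(i) serving as the bridge $\nabla(e_1 f) = \Yosida{1}{(\partial f)}$ that converts Moreau-envelope gradients into Yosida approximations.
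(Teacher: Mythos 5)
Your proposal is correct. For parts (i)--(iv) and (vi) it is essentially the paper's own argument: compute $\partial g$, $\partial g_{1}$, $\partial g_{2}$, $\nabla g_{3}$ via Fact~\ref{envelope}(i), use Fermat's rule (exact sum rule applies because the Moreau-envelope summands are everywhere differentiable), and identify the resulting zero sets with fixed-point sets through Theorem~\ref{evening} with $A_{i}=\partial f_{i}$; your justification of (vi) via Theorem~\ref{evening}\ref{evening80} differs only cosmetically from the paper's observation that $S,E,F,\Fix(\lambda_{1}\prox_{f_{1}}+\lambda_{2}\prox_{f_{2}})$ are $\argmin$ sets of lower semicontinuous convex functions.

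Where you genuinely diverge is \eqref{related1}. The paper proves it at the level of function values: for $z\in\argmin\pfl$, Fact~\ref{proxpaper}(i) (exactness of the infimum defining the proximal average) produces $(x,y)$ with $z=\lambda_{1}x+\lambda_{2}y$ attaining $\pfl(z)$, and Proposition~\ref{onlyoneneeded} shows $(x,y)\in\argmin g=S$, the converse coming from the defining inequality of $\pfl$; thus the paper's (v) depends on its part (i). You instead argue at the operator level: your part (iv) together with Theorem~\ref{evening}\ref{set1} gives $\argmin\pfl=\Fix J_{A}$, where $J_{A}=\lambda_{1}\prox_{f_{1}}+\lambda_{2}\prox_{f_{2}}=\prox_{\pfl}$ by Fact~\ref{knownlong}(i) and Fact~\ref{proxpaper}(iii), and then Theorem~\ref{StoJ} yields $\Fix J_{A}=L(S)$. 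This is sound and non-circular: Theorem~\ref{StoJ} is established independently in Section~\ref{resolvcomp}, and the operator-theoretic $S$ there coincides with the prox-theoretic $S$ here because $\partial(f_{i}/\lambda_{j})=(\partial f_{i})/\lambda_{j}$ for $\lambda_{j}>0$, so $J_{(\partial f_{i})/\lambda_{j}}=\prox_{f_{i}/\lambda_{j}}$. What each route buys: yours decouples \eqref{related1} from part (i) and exhibits (v) as a pure specialization of the monotone-operator machinery; the paper's route avoids invoking the homeomorphism $T$ and instead extracts the sharper value-level information that every minimizer of $\pfl$ decomposes as an exact convex combination attaining the infimum in \eqref{ura1} --- which is precisely the content Fact~\ref{proxpaper}(i) adds beyond the set identity. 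Your flagged obstacle (properness of $g$ and the possibly empty $\argmin$) is real but handled exactly as you say: $g$ is proper because $\dom g=\dom f_{1}\times\dom f_{2}\neq\emp$ and $g$ is bounded below by $-\infty$-free sums, and all identities hold vacuously when the sets are empty, consistent with Corollary~\ref{allfunctions}(i).
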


\begin{proof}
We have
\begin{align*}
\partial g(x,y) &=\bigg(\frac{\partial f_{1}(x)}{\lambda_{2}}+x-y,\frac{\partial f_{2}(y)}{\lambda_{1}}+y-x\bigg)\\
&=\bigg(\frac{\partial f_{1}}{\lambda_{2}}\times \frac{\partial f_{2}}{\lambda_{1}}+(\Id-R)\bigg)(x,y).
\end{align*}
Moreover,
using $\nabla e_{1}f_{i}=\yosida{1}(\partial f_{i})$,
$$\nabla e_{1}(f_1/\lambda_{2})=
\Yosida{1}{\left[\partial f_{1}/\lambda_{2}\right]}
=\Id-\prox_{f_{1}/\lambda_{2}}, \quad
\nabla e_{1}(f_2/\lambda_{1})
=\Yosida{1}{\left[\partial f_{2}/\lambda_{1}\right]}
=\Id-\prox_{f_{2}/\lambda_{1}},$$
 by Fact~\ref{envelope}, we obtain
\begin{align}
\partial g_1 &=\partial f_{1}/\lambda_{2}+\yosida{1}[\partial f_{2}/\lambda_{1}]=\partial f_{1}/\lambda_{2}+\Id-\prox_{f_{2}/\lambda_{1}},\label{grad1}\\
\partial g_{2} &= \partial f_{2}/\lambda_{1}+\yosida{1}[\partial f_{1}/\lambda_{2}]=\partial f_{2}/\lambda_{1} +\Id-\prox_{f_{1}/\lambda_{2}},\label{grad2}\\
\nabla g_{3}& =\lambda_{1}\yosida{1}(\partial f_{1})+\lambda_{2}\yosida{1}(\partial f_{2})=
\Id-(\lambda_{1}\prox_{f_{1}}+\lambda_{2}\prox_{f_{2}}).\label{grad3}
\end{align}
Then (i)--(iii) follows from Theorem~\ref{evening} by using $A_{1}=\partial f_{1}, A_{2}=\partial f_{2}$, or \cite[Proposition 4.1]{Baus05} by using $\gamma=1$ and functions
$f_{1}/\lambda_{2}, f_{2}/\lambda_{1}$.
To show \ref{related2}, apply Fact~\ref{proxpaper}(ii) to obtain
$e_{1}\pfl=\lambda_{1}e_{1}f_{1}+\lambda_{2}e_{1}f_{2}.$
Since $\argmin e_{1}\pfl=\argmin \pfl$ by Fact~\ref{envelope}, we have
the first equality of \ref{related2}. Furthermore,
\eqref{grad3} gives
$\argmin g_{3}= \nabla g_{3}^{-1}(0)=[\lambda_{1}\yosida{1}(\partial f_{1})+\lambda_{2}\yosida{1}(\partial f_{2})]^{-1}(0)$.

(v): We first show \eqref{related1}. Let $z\in\argmin\pfl$. By Fact~\ref{proxpaper}(i), $z=\lambda_{1}x+\lambda_{2}y$ for some $(x,y)$
with
$$\frac{\pfl(z)}{\lambda_{1}\lambda_{2}}=\frac{f_{1}(x)}{\lambda_{2}}+\frac{f_{2}(y)}{\lambda_{1}}+
\frac{\|x-y\|^{2}}{2}.$$
By Proposition~\ref{onlyoneneeded}, $\frac{\pfl(z)}{\lambda_{1}\lambda_{2}}=\min g$, so $(x,y)\in\argmin g$. As $S=\argmin g$ by (i), we have 
$z\in \menge{\lambda_{1}x+\lambda_{2}y}{(x,y)\in S}$. Conversely, if $(x,y)\in S$, then by definition of $\pfl$ and Proposition~\ref{onlyoneneeded},
$$\frac{\pfl(\lambda_{1}x+\lambda_{2}y)}{\lambda_{1}\lambda_{2}}\leq \frac{f_{1}(x)}{\lambda_{2}}+\frac{f_{2}(y)}{\lambda_{1}}+\frac{\|x-y\|^2}{2}= \min g=\min\frac{\pfl}{\lambda_{1}\lambda_{2}},$$
thus $\lambda_{1}x+\lambda_{2}y\in\argmin \pfl$. Therefore, \eqref{related1} holds.
Note that \eqref{related3} follows from Fact~\ref{proxpaper}(iii) and
Fact~\ref{elementarym}(i) for $\gamma=1$.

(vi): Indeed, these sets are $\argmin$ sets 
of lower semicontinuous convex 
functions $g, g_{1}, g_{2}, \pfl$ respectively.
\end{proof}

Problem~\eqref{yong} is a least squares problem
in terms of convex functions $f_{1},f_{2}$.
The next result is well known.

\begin{corollary}[least square solution]
Let $f_{1},f_{2}\in\GX(\HH)$ and $\lambda_1+\lambda_{2}=1$
with each $\lambda_{i}>0$.
Then
\begin{equation}
\label{least1}
\Fix(\lambda_{1}\prox_{f_{1}}+\lambda_{2}\prox_{f_{2}})=\argmin(\lambda_{1}e_{1}f_{1}+\lambda_{2}
e_{1}f_{2}).
\end{equation}
When $f_{i}=\iota_{C_{i}}$ with $C_{i}\subseteq\HH$
being nonempty closed convex, we have
\begin{equation}\label{least2}
\Fix(\lambda_{1}P_{C_{1}}+\lambda_{2}P_{C_{2}})=\argmin\bigg(\lambda_{1}\frac{d_{C_{1}}^2}{2}
+\lambda_{2}\frac{d_{C_{2}}^{2}}{2}\bigg).
\end{equation}
\end{corollary}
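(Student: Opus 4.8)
The plan is to obtain \eqref{least1} as an immediate reassembly of Theorem~\ref{proxfunctions}, and then to derive \eqref{least2} by specializing to indicator functions. No new machinery is required; the entire corollary lives off results already in hand.

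For \eqref{least1}, I would simply concatenate parts \ref{function1} and \ref{function2} of Theorem~\ref{proxfunctions}. The one thing worth spelling out is that the function $g_{3}$ of \eqref{yong} is \emph{by definition} $g_{3}=\lambda_{1}e_{1}f_{1}+\lambda_{2}e_{1}f_{2}$, so the equality $\argmin g_{3}=\argmin(\lambda_{1}e_{1}f_{1}+\lambda_{2}e_{1}f_{2})$ is a tautology rather than a computation. Theorem~\ref{proxfunctions}\ref{function2} then gives $\Fix(\lambda_{1}\prox_{f_{1}}+\lambda_{2}\prox_{f_{2}})=\argmin\pfl$, and Theorem~\ref{proxfunctions}\ref{function1} gives $\argmin\pfl=\argmin g_{3}$. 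Chaining the three equalities $\Fix(\lambda_{1}\prox_{f_{1}}+\lambda_{2}\prox_{f_{2}})=\argmin\pfl=\argmin g_{3}=\argmin(\lambda_{1}e_{1}f_{1}+\lambda_{2}e_{1}f_{2})$ is exactly \eqref{least1}.

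For \eqref{least2}, I would substitute $f_{i}=\iota_{C_{i}}$ into \eqref{least1} and invoke two elementary identities. First, $\prox_{\iota_{C_{i}}}=P_{C_{i}}$, which is the very definition of the metric projection recorded in the introduction. Second, the Moreau envelope of an indicator function is the halved squared distance: evaluating the defining infimal convolution gives $e_{1}\iota_{C_{i}}(x)=\inf_{y\in C_{i}}\tfrac{1}{2}\|x-y\|^{2}=\tfrac{1}{2}d_{C_{i}}^{2}(x)$, so $e_{1}\iota_{C_{i}}=d_{C_{i}}^{2}/2$. Feeding both substitutions into \eqref{least1} turns its left-hand side into $\Fix(\lambda_{1}P_{C_{1}}+\lambda_{2}P_{C_{2}})$ and its right-hand side into $\argmin\big(\lambda_{1}d_{C_{1}}^{2}/2+\lambda_{2}d_{C_{2}}^{2}/2\big)$, which is precisely \eqref{least2}.

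I do not anticipate any genuine obstacle here, since the corollary is purely a specialization of Theorem~\ref{proxfunctions}. The only step requiring the slightest care is the identification $e_{1}\iota_{C_{i}}=d_{C_{i}}^{2}/2$, but this is a one-line evaluation of the infimal convolution, valid because each $C_{i}$ is assumed nonempty closed convex, which guarantees that $P_{C_{i}}$ and $d_{C_{i}}$ are well defined and that the envelopes are proper. It is also worth noting in passing that these identities make transparent why \eqref{least2} deserves the name ``least square solution'': the right-hand side minimizes a weighted sum of squared distances to the two sets.
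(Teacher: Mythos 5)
Your proposal is correct and matches the paper's proof essentially verbatim: the paper also obtains \eqref{least1} by combining Theorem~\ref{proxfunctions}\ref{function1} and \ref{function2}, and then deduces \eqref{least2} from the identities $\prox_{\iota_{C_{i}}}=P_{C_{i}}$ and $e_{1}\iota_{C_{i}}=d_{C_{i}}^{2}/2$. Your added remarks --- that $\argmin g_{3}=\argmin(\lambda_{1}e_{1}f_{1}+\lambda_{2}e_{1}f_{2})$ is tautological by the definition of $g_{3}$, and that $e_{1}\iota_{C_{i}}=d_{C_{i}}^{2}/2$ is a one-line evaluation of the infimal convolution --- are accurate elaborations of details the paper leaves implicit.
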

\begin{proof}
Combining Theorem~\ref{proxfunctions}\ref{function1} and \ref{function2} gives \eqref{least1}.
 Observe that $\prox_{\iota_{C_{i}}}=P_{C_{i}}$ and $e_{1}\iota_{C_{i}}=d_{C_{i}}^2/2$. Hence \eqref{least2} follows from \eqref{least1}.
\end{proof}

The following result says that when $S\neq\varnothing$,
for every $(x,y)\in S$ the difference $x-y$,
sometimes also called the \emph{gap vector},
is the unique solution to the dual problem.
Characterizations of $S, E, F,$
and $\Fix(\lambda_{1}\prox_{f_{1}}+\lambda_{2}\prox_{f_{2}})$
in terms of dual solution $\bar{\phi}$ come as follows.

\begin{theorem}\label{characterization}
\begin{enumerate}
\item We have
$(x,y)\in S$ and $\phi\in S^*$ if and only if
$$(-\phi,\phi)\in \partial f_{1}(x)/\lambda_{2}\times \partial f_{2}(y)/\lambda_{1},\quad
\phi=x-y.$$
\item Let $\bar{\phi}$ be the unique solution to $(D)$ and
assume that $S\neq\varnothing$.
Then for every $(x,y)\in S$, one has $x-y=\bar{\phi}$. Moreover,
$$S=\big(\partial f_{1}/\lambda_{2}\times\partial f_{2}/\lambda_{1}\big)^{-1}(-\bar{\phi},\bar{\phi})\cap (R-\Id)^{-1}(-\bar{\phi},\bar{\phi}).$$
\end{enumerate}
\end{theorem}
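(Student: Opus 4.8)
The plan is to connect the primal problem \eqref{mixed} to its Fenchel--Rockafellar dual $(D)$ via the optimality conditions already extracted in Theorem~\ref{solongso} and the dual-solution theorem, and then to recognize $S$ and $S^*$ as the primal and dual solution sets. Recall from Theorem~\ref{proxfunctions}(i) that $S=\argmin g$, where $g(x,y)=f_1(x)/\lambda_2+f_2(y)/\lambda_1+\|x-y\|^2/2$. First I would identify the correct Fenchel--Rockafellar data: with $f=f_1/\lambda_2\oplus f_2/\lambda_1$, $j=\|\cdot\|^2/2$, and $L=(\Id,-\Id):\HH\times\HH\to\HH$, we have $Lx=x-y$, so $\argmin g=\argmin(f+j\circ L)$. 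The dual $(D)$ of Theorem~\ref{solongso} has the unique solution $\bar\phi$ described in the preceding theorem, and Remark~\ref{r:blablabla} records $\bar\phi=v^*$.

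For part (i), I would apply the optimality characterization from Fact~\ref{duality}: $\bar x=(x,y)$ is a minimizer of $f+j\circ L$ if and only if there is $\bar y^*=\phi$ with $-L^*\phi\in\partial f(x,y)$ and $\phi\in\partial j(L(x,y))$. Since $\partial j=\Id$, the second inclusion gives exactly $\phi=L(x,y)=x-y$. Computing $L^*:\HH\to\HH\times\HH$, $L^*\phi=(\phi,-\phi)$, so $-L^*\phi=(-\phi,\phi)$, and since $\partial f(x,y)=\partial(f_1/\lambda_2)(x)\times\partial(f_2/\lambda_1)(y)=(\partial f_1(x)/\lambda_2)\times(\partial f_2(y)/\lambda_1)$, the first inclusion reads $(-\phi,\phi)\in\partial f_1(x)/\lambda_2\times\partial f_2(y)/\lambda_1$. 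This is precisely the claimed equivalence, provided I also argue that $(x,y)\in S$ is genuinely the same as $(x,y)\in\argmin g$, which is Theorem~\ref{proxfunctions}(i), and that $\phi\in S^*$ means $\phi$ solves $(D)$. The main thing to verify carefully is that the constraint-qualification hypothesis of Fact~\ref{duality} is met so that the optimality conditions are both necessary and sufficient; here $g=j$ is continuous everywhere (finite and continuous on all of $\HH$), so $g$ is continuous at $L x_0$ for any feasible $x_0$, and the qualification is automatic. This is the step I expect to be the only real subtlety, since otherwise one would only get necessity.

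For part (ii), uniqueness of the dual solution (the preceding theorem) gives $S^*=\{\bar\phi\}$ whenever the primal has a finite infimum, which holds because $S\neq\varnothing$ forces the common optimal value to be finite via Proposition~\ref{onlyoneneeded}. Then part (i) applied with $\phi=\bar\phi$ yields, for every $(x,y)\in S$, both $x-y=\bar\phi$ and $(-\bar\phi,\bar\phi)\in(\partial f_1/\lambda_2\times\partial f_2/\lambda_1)(x,y)$. The first statement is exactly the uniqueness of the gap vector. For the set identity, I would read the two conditions back as membership statements: $(-\bar\phi,\bar\phi)\in(\partial f_1/\lambda_2\times\partial f_2/\lambda_1)(x,y)$ is equivalent to $(x,y)\in(\partial f_1/\lambda_2\times\partial f_2/\lambda_1)^{-1}(-\bar\phi,\bar\phi)$, and $x-y=\bar\phi$ is equivalent to $(R-\Id)(x,y)=(y-x,x-y)=(-\bar\phi,\bar\phi)$, i.e.\ $(x,y)\in(R-\Id)^{-1}(-\bar\phi,\bar\phi)$. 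Intersecting the two gives the displayed formula, with the reverse inclusion following because any $(x,y)$ in the intersection satisfies both optimality conditions and hence lies in $S=\argmin g$ by the sufficiency direction of part (i). This mirrors the operator-level identity \eqref{schoolbreak2} of Theorem~\ref{evening}(ix), now specialized to subdifferentials with $\bar\phi=-u^*=v^*$, so consistency with the earlier result serves as a useful sanity check.
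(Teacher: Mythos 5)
Your proposal is correct and follows essentially the same route as the paper: both apply Fact~\ref{duality} with $f=f_{1}/\lambda_{2}\oplus f_{2}/\lambda_{1}$, $g=j$, and $L=(\Id,-\Id)$, read off the optimality conditions $-L^*\phi\in\partial f(x,y)$ and $\phi=L(x,y)=x-y$ for part (i), and then use uniqueness of the dual solution plus part (i) to get part (ii). You simply make explicit some steps the paper leaves implicit (the constraint qualification via continuity of $j$, the adjoint computation, and the translation of the optimality conditions into the displayed set identity), which is a faithful elaboration rather than a different argument.
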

\begin{proof}
(i). Use
$L^*=(\Id,-\Id): \HH\rightarrow\HH\times\HH $, $f=f_{1}/\lambda_{2}\oplus f_{2}/\lambda_{1}$ and
$g=j$.
By Fact~\ref{duality} again, $(x,y)\in S$ and $\phi\in S^*$ if and only if
$$(-\phi,\phi)\in \partial f_{1}(x)/\lambda_{2}\times \partial f_{2}(y)/\lambda_{1},\quad
\phi=x-y.$$
(ii). As the dual objective function is strictly concave,
the dual solution is unique, say $\bar{\phi}$.
It suffices to apply (i).
\end{proof}

\begin{theorem}\label{penticton}
\begin{enumerate}
\item $x\in E$ if and only if
$$-\bar{\phi}\in \partial f_{1}(x)/\lambda_{2}, \quad \bar{\phi}=x-\prox_{f_{2}/\lambda_{1}}(x).$$
\item $y\in F$ if and only if
$$-\bar{\phi}=y-\prox_{f_{1}/\lambda_{2}}(y),\quad \bar{\phi}\in \partial f_{2}(y)/\lambda_{1}.$$
\item $z\in \Fix (\lambda_{1}\prox_{f_{1}}+\lambda_{2}\prox_{f_{2}})$ if and only if
$$-\bar{\phi}=\lambda_2^{-1}(z-\prox_{f_{1}}(z)),\quad
\bar{\phi}=\lambda_1^{-1}(z-\prox_{f_{2}}(z)).$$
\end{enumerate}
\end{theorem}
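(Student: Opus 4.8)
The plan is to transport each of the three sets to the ``master'' set $S$ via the homeomorphisms already constructed, and then read off the stated conditions from the subdifferential characterization of $S$ in Theorem~\ref{characterization}. Throughout I take $A_{1}=\partial f_{1}$ and $A_{2}=\partial f_{2}$, so that $\prox_{f_{i}/\lambda}=J_{\partial f_{i}/\lambda}$, and I recall from Remark~\ref{r:blablabla} that the unique dual solution satisfies $\bar{\phi}=v^{*}$, while $u^{*}=-v^{*}=-\bar{\phi}$ by Theorem~\ref{evening}\ref{evening5}. The one fact that drives all three parts is that, for every $(x,y)\in S$, the gap vector equals the dual solution: $x-y=\bar{\phi}$ (Theorem~\ref{characterization}(ii)).

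For (i) I would start from Theorem~\ref{monday}\ref{EtoS}, by which $x\in E$ if and only if $(x,\prox_{f_{2}/\lambda_{1}}x)\in S$. Writing $y=\prox_{f_{2}/\lambda_{1}}x$, the resolvent identity gives $x-y\in\partial f_{2}(y)/\lambda_{1}$ automatically. Applying Theorem~\ref{characterization}(i) with $\phi=\bar{\phi}$ (valid because $(D)$ has the unique solution $\bar{\phi}$), membership $(x,y)\in S$ is equivalent to the three conditions $-\bar{\phi}\in\partial f_{1}(x)/\lambda_{2}$, $\bar{\phi}\in\partial f_{2}(y)/\lambda_{1}$, and $\bar{\phi}=x-y$. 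The crux is that the middle inclusion is redundant: once $\bar{\phi}=x-y$ is imposed it follows from the resolvent identity just noted. Hence $x\in E$ collapses to $-\bar{\phi}\in\partial f_{1}(x)/\lambda_{2}$ together with $\bar{\phi}=x-\prox_{f_{2}/\lambda_{1}}x$, which is exactly (i). Part (ii) is the mirror image via Theorem~\ref{monday}\ref{FtoS}: writing $x=\prox_{f_{1}/\lambda_{2}}y$, it is now the $\partial f_{1}$-inclusion that is redundant (it follows from $y-x\in\partial f_{1}(x)/\lambda_{2}$ together with $-\bar{\phi}=x-y$), so the surviving conditions are $\bar{\phi}\in\partial f_{2}(y)/\lambda_{1}$ and $-\bar{\phi}=y-\prox_{f_{1}/\lambda_{2}}y$.

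For (iii) I would use Theorem~\ref{StoJ}. Write $x=\prox_{f_{1}}z$ and $y=\prox_{f_{2}}z$. The relation $z\in\Fix(\lambda_{1}\prox_{f_{1}}+\lambda_{2}\prox_{f_{2}})=\Fix J_{A}$ means $z=\lambda_{1}x+\lambda_{2}y$, and by Theorem~\ref{StoJ} it is equivalent to $(x,y)\in S$; Theorem~\ref{characterization}(ii) then gives $x-y=\bar{\phi}$. A one-line substitution yields $z-x=\lambda_{2}(y-x)=-\lambda_{2}\bar{\phi}$ and $z-y=\lambda_{1}(x-y)=\lambda_{1}\bar{\phi}$, i.e.\ the two displayed equalities. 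The converse needs nothing beyond algebra: assuming both equalities gives $\prox_{f_{1}}z=z+\lambda_{2}\bar{\phi}$ and $\prox_{f_{2}}z=z-\lambda_{1}\bar{\phi}$, whence $\lambda_{1}\prox_{f_{1}}z+\lambda_{2}\prox_{f_{2}}z=(\lambda_{1}+\lambda_{2})z+\lambda_{1}\lambda_{2}\bar{\phi}-\lambda_{1}\lambda_{2}\bar{\phi}=z$, so $z$ lies in the fixed point set.

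No part presents a genuine analytic difficulty; the substance already sits in the homeomorphisms of Theorems~\ref{StoJ} and \ref{monday} and in the identification $x-y=\bar{\phi}$. The only thing demanding care is the bookkeeping of signs and scalings: keeping straight that $\bar{\phi}=v^{*}=-u^{*}$, that the gap is $x-y$ and not $y-x$, and that the $\lambda_{i}^{-1}$ factors in (iii) appear because $\Fix J_{A}$ is expressed through the \emph{unscaled} resolvents $\prox_{f_{i}}$ whereas $E$, $F$, $S$ live with the $\lambda$-scaled ones. I would also verify that the biconditionals stay correct in the degenerate case: if $\bar{\phi}$ exists but $S=\varnothing$ (hence $E=F=\Fix J_{A}=\varnothing$ by Corollary~\ref{allsets}(i)), then the very reduction above shows the right-hand conditions are unsatisfiable, so each ``if and only if'' holds vacuously. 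The one conceptual point worth flagging is the redundancy argument in (i) and (ii), which is precisely what reduces the three-condition description of $S$ to the two conditions stated.
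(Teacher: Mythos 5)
Your proof is correct, but it is not the paper's argument. The paper disposes of Theorem~\ref{penticton} in one line, by pure convex duality: by Theorem~\ref{proxfunctions}, the sets $E$, $F$ and $\Fix(\lambda_{1}\prox_{f_{1}}+\lambda_{2}\prox_{f_{2}})$ are the argmin sets of $g_{1}$, $g_{2}$ and $g_{3}$; these three minimization problems share the single Fenchel--Rockafellar dual $(D)$ by Theorem~\ref{solongso}; and the displayed conditions in (i)--(iii) are then exactly the primal--dual optimality conditions of Fact~\ref{duality} (the constraint qualification is automatic because in each problem one summand is a Moreau envelope, hence everywhere finite and continuous, with gradient of the form $\Id-\prox$). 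You instead stay entirely on the monotone-operator/fixed-point side: you transport $E$, $F$ and $\Fix J_{A}$ into $S$ via the homeomorphisms of Theorems~\ref{monday} and~\ref{StoJ}, and then read everything off from Theorem~\ref{characterization} together with the gap identity $x-y=\bar{\phi}$, observing that one of the three conditions describing $S$ becomes redundant once $y=\prox_{f_{2}/\lambda_{1}}x$ (resp.\ $x=\prox_{f_{1}/\lambda_{2}}y$) is substituted. Both routes are sound: the paper's buys brevity and symmetry --- each part is literally an instance of Fact~\ref{duality} --- while yours buys economy of duality input (only the one computation behind Theorem~\ref{characterization} is used, the rest being resolvent algebra), a converse in (iii) that is pure algebra needing no duality at all, and an explicit treatment of the degenerate case $S=\varnothing$, which the paper leaves implicit. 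Two minor blemishes, neither fatal: in part (ii) your parenthetical ``together with $-\bar{\phi}=x-y$'' has the sign reversed --- it should read $-\bar{\phi}=y-x$, consistent with the condition $-\bar{\phi}=y-\prox_{f_{1}/\lambda_{2}}(y)$ that you state immediately afterwards, and the surrounding argument does use the correct identity; and your application of Theorem~\ref{characterization}(i) with $\phi=\bar{\phi}$ tacitly identifies the dual solution set with $\{\bar{\phi}\}$, which is justified (the dual objective is strictly convex because of the quadratic term) but deserves a word.
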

\begin{proof}
This follows from Theorem~\ref{solongso} and Fact~\ref{duality}.
\end{proof}

\begin{remark} Theorems~\ref{characterization} and \ref{penticton} are convex function analogues for
Theorem~\ref{evening}\ref{evening5}, \ref{EandF}, \ref{evening7} and \ref{evening600}.
\end{remark}

\subsection{Relationship among minimizers}
We now study the relationships among $\Fix(\lambda_{1}\prox_{f_{1}}+
\lambda_{2}\prox_{f_{2}})$,
$E=\Fix(\prox_{f_{1}/\lambda_{2}}\prox_{f_{2}/\lambda_{1}})$, $F=\Fix(\prox_{f_{2}/\lambda_{1}}\prox_{f_{1}/\lambda_{2}})$, and
$$S=\menge{(x,y)}{x=\prox_{f_{1}/\lambda_{2}}y, \ y=\prox_{f_{2}/\lambda_{1}}x}.$$

\begin{theorem}\label{amy1}
 Let $\bar{\phi}$ be the dual solution, i.e., the solution to \eqref{dualprob}.
Define $T_{1}:E\rightarrow
\Fix(\lambda_{1}\prox_{f_{1}}+
\lambda_{2}\prox_{f_{2}})$ by
$$T_{1}(x)=\lambda_{1}x+\lambda_{2}\prox_{f_{2}/\lambda_{1}}(x)=x-\lambda_{2}\bar{\phi}.$$
Then $T_{1}$ is a homeomorphism with $T_{1}^{-1}(z)=\prox_{f_{1}}(z).$
Consequently,
\begin{equation}\label{skate1}
\Fix(\lambda_{1}\prox_{f_{1}}+
\lambda_{2}\prox_{f_{2}})=E-\lambda_{2}\bar{\phi}.
\end{equation}
\end{theorem}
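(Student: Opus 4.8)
The plan is to derive Theorem~\ref{amy1} as the convex-function specialization of Theorem~\ref{plumber}\ref{monday1}, applied to the maximal monotone operators $A_1 := \partial f_1$ and $A_2 := \partial f_2$. First I would record the dictionary between the operator and function settings: since $\lambda_j > 0$ one has $\partial(f_i/\lambda_j) = (\partial f_i)/\lambda_j$, whence $J_{A_i} = \prox_{f_i}$ and $J_{A_i/\lambda_j} = \prox_{f_i/\lambda_j}$. In particular the set $E$ appearing here is exactly the $E$ of Theorem~\ref{evening}\ref{set2} for these operators, and by Fact~\ref{knownlong}(i) (with $\gamma = 1$) the average resolvent equals $J_A = \lambda_1 J_{A_1} + \lambda_2 J_{A_2} = \lambda_1 \prox_{f_1} + \lambda_2 \prox_{f_2}$, so that $\Fix J_A = \Fix(\lambda_1 \prox_{f_1} + \lambda_2 \prox_{f_2})$ as sets.

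With this dictionary in place, Theorem~\ref{plumber}\ref{monday1} immediately supplies a homeomorphism $H_1 : E \to \Fix J_A$, $x \mapsto \lambda_1 x + \lambda_2 J_{A_2/\lambda_1} x = x + \lambda_2 u^*$, whose inverse is $H_1^{-1}(z) = J_{A_1}(z) = \prox_{f_1}(z)$, together with $\Fix J_A = E + \lambda_2 u^*$. The remaining task is purely one of reconciling notation and signs. By Remark~\ref{r:blablabla} we have $\bar\phi = v^*$, and by Theorem~\ref{evening}(v) we have $u^* = -v^*$; hence $u^* = -\bar\phi$ and $\lambda_2 u^* = -\lambda_2\bar\phi$. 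Substituting these identities turns $H_1$ into the stated $T_1$, its inverse into $z \mapsto \prox_{f_1}(z)$, and $\Fix J_A = E + \lambda_2 u^*$ into $\Fix(\lambda_1\prox_{f_1}+\lambda_2\prox_{f_2}) = E - \lambda_2\bar\phi$, which is \eqref{skate1}.

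The only point demanding genuine attention is the sign bookkeeping $u^* = -v^* = -\bar\phi$; beyond that there is no real obstacle. As an internal cross-check of the closed form $T_1(x) = x - \lambda_2\bar\phi$, I would verify it directly: for $x \in E$, setting $y := \prox_{f_2/\lambda_1}(x)$ gives $x = \prox_{f_1/\lambda_2}(y)$, so $(x,y) \in S$ (equivalently, Theorem~\ref{monday}\ref{EtoS} specialized); the gap-vector uniqueness of Theorem~\ref{characterization}(ii) then yields $x - y = \bar\phi$, i.e.\ $\prox_{f_2/\lambda_1}(x) = x - \bar\phi$, and therefore $T_1(x) = \lambda_1 x + \lambda_2(x - \bar\phi) = x - \lambda_2\bar\phi$. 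This exhibits $T_1$ as the restriction to $E$ of the translation $x \mapsto x - \lambda_2\bar\phi$, making continuity of $T_1$ and of $T_1^{-1}$, and hence the homeomorphism claim, entirely transparent.
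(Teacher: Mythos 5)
Your proposal is correct and takes essentially the same route as the paper, whose entire proof is the one-line instruction to apply Theorem~\ref{plumber}\ref{monday1} with $A_{i}=\partial f_{i}$. Your additional sign bookkeeping ($u^{*}=-v^{*}=-\bar{\phi}$, via Remark~\ref{r:blablabla} and Theorem~\ref{evening}(v)) and the cross-check through Theorem~\ref{characterization}(ii) simply make explicit the notational reconciliation that the paper leaves to the reader.
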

\begin{proof}
Use Theorem~\ref{plumber}\ref{monday1} with $A_{i}=\partial f_{i}$ for $i=1,2$.
\end{proof}

\begin{theorem}\label{amy2}
Let $\bar{\phi}$ be the dual solution.
Define $T_{2}:F\rightarrow
\Fix(\lambda_{1}\prox_{f_{1}}+
\lambda_{2}\prox_{f_{2}})$ by
$$T_{2}(y)=\lambda_{1}\prox_{f_{1}/\lambda_{2}}(y)+\lambda_{2}y=y+\lambda_{1}\bar{\phi}.$$
Then $T_{2}$ is a homeomorphism with $T_{2}^{-1}(z)=\prox_{f_{2}}(z).$
Consequently,
\begin{equation}\label{skate2}
\Fix(\lambda_{1}\prox_{f_{1}}+
\lambda_{2}\prox_{f_{2}})=F+\lambda_{1}\bar{\phi}.
\end{equation}
\end{theorem}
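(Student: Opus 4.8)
The plan is to recognize Theorem~\ref{amy2} as the exact convex-function specialization of the abstract homeomorphism already established in Theorem~\ref{plumber}\ref{monday2}, and simply to invoke that result under the substitution $A_{i}=\partial f_{i}$. First I would observe that with $A_{1}=\partial f_{1}$ and $A_{2}=\partial f_{2}$ we have $J_{A_{i}}=\prox_{f_{i}}$ and $J_{A_{i}/\lambda_{j}}=\prox_{f_{i}/\lambda_{j}}$, so the operator-theoretic set $F$ from Theorem~\ref{evening}\ref{set3} coincides with the present $F=\Fix(\prox_{f_{2}/\lambda_{1}}\prox_{f_{1}/\lambda_{2}})$, and $\Fix J_{A}$ coincides with $\Fix(\lambda_{1}\prox_{f_{1}}+\lambda_{2}\prox_{f_{2}})$ by Fact~\ref{proxpaper}(iii) together with Fact~\ref{elementarym}(i). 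Likewise the map $H_{2}\colon y\mapsto \lambda_{1}J_{A_{1}/\lambda_{2}}y+\lambda_{2}y$ becomes exactly $T_{2}\colon y\mapsto \lambda_{1}\prox_{f_{1}/\lambda_{2}}(y)+\lambda_{2}y$.

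Next I would identify the dual variable. In Theorem~\ref{plumber} the relevant vector is $u^{*}$ from Theorem~\ref{evening}(v), and the formula there reads $H_{2}(y)=-\lambda_{1}u^{*}+y$, giving $\Fix J_{A}=F-\lambda_{1}u^{*}$. In the convex setting the corresponding dual solution is $\bar\phi$, and the bridge is provided by Remark~\ref{r:blablabla}, which records that $\bar\phi=v^{*}$; since $u^{*}=-v^{*}$ by Theorem~\ref{evening}\ref{evening4}, we get $u^{*}=-\bar\phi$. Substituting $u^{*}=-\bar\phi$ into $H_{2}(y)=-\lambda_{1}u^{*}+y$ yields $T_{2}(y)=y+\lambda_{1}\bar\phi$, matching the stated second form, and substituting into $\Fix J_{A}=F-\lambda_{1}u^{*}$ yields $\eqref{skate2}$.

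Finally I would transfer the homeomorphism claim and the inverse formula. Since Theorem~\ref{plumber}\ref{monday2} already asserts that $H_{2}$ is a homeomorphism with $H_{2}^{-1}(z)=J_{A_{2}}(z)$, the identification $H_{2}=T_{2}$ and $J_{A_{2}}=\prox_{f_{2}}$ immediately gives that $T_{2}$ is a homeomorphism with $T_{2}^{-1}(z)=\prox_{f_{2}}(z)$. Thus the entire proof is the single sentence: apply Theorem~\ref{plumber}\ref{monday2} with $A_{i}=\partial f_{i}$ and use $u^{*}=-\bar\phi$ (Remark~\ref{r:blablabla} and Theorem~\ref{evening}\ref{evening4}).

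The only genuine point requiring care --- and the step I expect to be the main obstacle --- is the bookkeeping on signs and on which weight, $\lambda_{1}$ or $\lambda_{2}$, attaches to the shift. The abstract statement carries the gap vector $u^{*}$ while the convex statement is phrased in terms of $\bar\phi=v^{*}=-u^{*}$, so an error of sign would flip $T_{2}(y)=y+\lambda_{1}\bar\phi$ into $y-\lambda_{1}\bar\phi$ and would break the consistency with the companion Theorem~\ref{amy1}. I would therefore double-check against Theorem~\ref{penticton}(ii), which characterizes $y\in F$ via $\bar\phi\in\partial f_{2}(y)/\lambda_{1}$ and $-\bar\phi=y-\prox_{f_{1}/\lambda_{2}}(y)$: the latter gives $\prox_{f_{1}/\lambda_{2}}(y)-y=\bar\phi$, so $T_{2}(y)=\lambda_{1}(\prox_{f_{1}/\lambda_{2}}(y)-y)+y=\lambda_{1}\bar\phi+y$, confirming both the sign and the weight independently of the abstract derivation.
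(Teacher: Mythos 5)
Your proposal is correct and follows exactly the paper's own route: the paper's entire proof is ``Apply Theorem~\ref{plumber}\ref{monday2} with $A_{i}=\partial f_{i}$ for $i=1,2$,'' which is precisely your main step. Your additional bookkeeping --- identifying $\bar\phi=v^{*}=-u^{*}$ via Remark~\ref{r:blablabla} and Theorem~\ref{evening}(v), and the sign cross-check against Theorem~\ref{penticton}(ii) --- is sound and merely makes explicit what the paper leaves implicit.
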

\begin{proof}
Apply Theorem~\ref{plumber}\ref{monday2} with $A_{i}=\partial f_{i}$ for $i=1,2$.
\end{proof}

\begin{theorem}\label{amy3}
Define $T: S\rightarrow \Fix(\lambda_{1}\prox_{f_{1}}+
\lambda_{2}\prox_{f_{2}})$ by
$$T(x,y)=\lambda_{1}x+\lambda_{2}y.$$
Then $T$ is a homeomorphism. Moreover, for every $z\in \Fix(\prox_{\pfl})$ one has
$T^{-1}(z)=(\prox_{f_{1}}(z),\prox_{f_{2}}(z)).$  Consequently,
$\Fix(\lambda_{1}\prox_{f_{1}}+
\lambda_{2}\prox_{f_{2}})=L(S)$ where $L:\HH\times\HH\rightarrow \HH: (x,y)\mapsto \lambda_{1}x+
\lambda_{2}y$.
\end{theorem}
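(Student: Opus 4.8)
The plan is to obtain the statement as the direct specialization of Theorem~\ref{StoJ} to subdifferential operators, exactly mirroring the way Theorems~\ref{amy1} and \ref{amy2} were deduced from Theorem~\ref{plumber}. First I would set $A_1 := \partial f_1$ and $A_2 := \partial f_2$; these are maximal monotone, so Theorem~\ref{StoJ} applies. Under this choice each resolvent becomes the corresponding proximal mapping, $J_{A_1} = \prox_{f_1}$ and $J_{A_2} = \prox_{f_2}$, and because $\partial f_i/\lambda_j = \partial(f_i/\lambda_j)$ we have $J_{A_1/\lambda_2} = \prox_{f_1/\lambda_2}$ and $J_{A_2/\lambda_1} = \prox_{f_2/\lambda_1}$. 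Hence the set $S$ appearing in Theorem~\ref{StoJ} coincides verbatim with the set $S = \menge{(x,y)}{x = \prox_{f_1/\lambda_2}y,\ y = \prox_{f_2/\lambda_1}x}$ fixed in this subsection.

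The one identification to record is that the target $\Fix J_A$ in Theorem~\ref{StoJ} is precisely $\Fix(\lambda_1\prox_{f_1}+\lambda_2\prox_{f_2})$. This is immediate from the resolvent-average identity $J_A = \lambda_1 J_{A_1} + \lambda_2 J_{A_2} = \lambda_1\prox_{f_1}+\lambda_2\prox_{f_2}$, which by Fact~\ref{proxpaper}(iii) equals $\prox_{\pfl}$; thus $\Fix J_A = \Fix\prox_{\pfl} = \Fix(\lambda_1\prox_{f_1}+\lambda_2\prox_{f_2})$, and the map $T(x,y) = \lambda_1 x + \lambda_2 y$ is literally the same map in both statements.

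With these dictionary entries in place, Theorem~\ref{StoJ} yields at once that $T$ is a homeomorphism from $S$ onto $\Fix J_A$, that its inverse is $z \mapsto (J_{A_1}z, J_{A_2}z) = (\prox_{f_1}(z), \prox_{f_2}(z))$, and that $\Fix J_A = L(S)$. I expect no genuine obstacle here, since there is no new analytic content beyond the translation; the only care required is the routine bookkeeping confirming that $S$, the map $T$, the inverse formula, and the target fixed-point set each match their proximal counterparts. This is exactly the mechanism already exploited in the proofs of Theorems~\ref{amy1} and \ref{amy2}, so the proof reduces to citing Theorem~\ref{StoJ} with $A_i = \partial f_i$ together with Fact~\ref{proxpaper}(iii).
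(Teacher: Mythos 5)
Your proposal is correct and is essentially identical to the paper's own proof, which simply applies Theorem~\ref{StoJ} with $A_i=\partial f_i$ for $i=1,2$. The extra bookkeeping you record (that $J_{\partial f_i/\lambda_j}=\prox_{f_i/\lambda_j}$ and that $\Fix J_A=\Fix(\prox_{\pfl})$ via Fact~\ref{proxpaper}(iii)) is exactly the translation the paper leaves implicit.
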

\begin{proof}
Use Theorem~\ref{StoJ} with $A_{i}=\partial f_{i}$ for $i=1,2$.
\end{proof}

\begin{theorem}The mapping 
$\prox_{f_{2}/\lambda_{1}}\negthinspace|_E:E\to F$ 
is a homeomorphism with inverse
$\prox_{f_{1}/\lambda_{2}}\negthinspace|_F$. 
\end{theorem}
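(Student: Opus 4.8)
The plan is to obtain this homeomorphism \emph{for free} by composing the two homeomorphisms already established in Theorem~\ref{monday}, exactly in the spirit of the preceding Theorems~\ref{amy1}--\ref{amy3}. Specializing Theorem~\ref{monday} to $A_i=\partial f_i$ (with $\gamma=1$), the maps $T_1\colon E\to S\colon x\mapsto(x,\prox_{f_2/\lambda_1}x)$ and $T_2\colon F\to S\colon y\mapsto(\prox_{f_1/\lambda_2}y,y)$ are homeomorphisms, with inverses $T_1^{-1}\colon(x,y)\mapsto x$ and $T_2^{-1}\colon(x,y)\mapsto y$. Here $S,E,F$ are the proximal sets defined just before Theorem~\ref{proxfunctions}, which coincide with the operator-theoretic sets of Section~\ref{resolvcomp} under the substitution $A_i=\partial f_i$, so the homeomorphisms transfer without modification.

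First I would verify that the composition $T_2^{-1}\circ T_1$ is exactly the asserted map. For $x\in E$ one has $T_1(x)=(x,\prox_{f_2/\lambda_1}x)\in S$, and $T_2^{-1}$ returns the second coordinate, so $T_2^{-1}(T_1(x))=\prox_{f_2/\lambda_1}x$; hence $T_2^{-1}\circ T_1=\prox_{f_2/\lambda_1}\negthinspace|_E\colon E\to F$. Symmetrically, for $y\in F$ one has $T_2(y)=(\prox_{f_1/\lambda_2}y,y)\in S$ and $T_1^{-1}(T_2(y))=\prox_{f_1/\lambda_2}y$, so $T_1^{-1}\circ T_2=\prox_{f_1/\lambda_2}\negthinspace|_F\colon F\to E$.

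The conclusion is then immediate: a composition of homeomorphisms is a homeomorphism, so $\prox_{f_2/\lambda_1}\negthinspace|_E$ is a homeomorphism of $E$ onto $F$. Since $T_1,T_2$ are bijections, $(T_2^{-1}\circ T_1)^{-1}=T_1^{-1}\circ T_2$, which is precisely $\prox_{f_1/\lambda_2}\negthinspace|_F$, so the two restricted proximal mappings are mutually inverse. (Equivalently, the bijection could be read off directly from Fact~\ref{key1}(v) with $A=\partial f_1/\lambda_2$, $B=\partial f_2/\lambda_1$, $\gamma=1$, with continuity supplied separately from the fact that proximal mappings, being firmly nonexpansive by Fact~\ref{elementarym}(ii), are nonexpansive and hence continuous, so their restrictions to $E$ and $F$ are continuous.)

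I expect no genuine obstacle; the only point demanding care is the bookkeeping confirming that $T_2^{-1}\circ T_1$ and $T_1^{-1}\circ T_2$ literally equal the restricted proximal mappings, which is just unwinding the coordinate definitions of $T_1,T_2$ and their inverses. A secondary point, already noted above, is to record that the sets $S,E,F$ of Theorem~\ref{monday} match the proximal-setting sets under $A_i=\partial f_i$.
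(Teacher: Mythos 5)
Your proof is correct, but it runs along a different track than the paper's. The paper disposes of this theorem in one line by citing Theorem~\ref{evening}\ref{evening6} (the specialization of Fact~\ref{key1}(v) to $A=\partial f_{1}/\lambda_{2}$, $B=\partial f_{2}/\lambda_{1}$, $\gamma=1$), which already exhibits the map in closed form: $\prox_{f_{2}/\lambda_{1}}|_{E}\colon x\mapsto x+u^{*}$ with inverse $\prox_{f_{1}/\lambda_{2}}|_{F}\colon y\mapsto y+v^{*}$; being translations by the unique gap vector, these are trivially mutually inverse homeomorphisms (a point the citation leaves implicit, since Theorem~\ref{evening}\ref{evening6} is stated only as a bijection), and the explicit formula $u^{*}=-\bar{\phi}$ is what feeds \eqref{skate1}--\eqref{skate2}. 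You instead factor through $S$: composing the homeomorphisms $T_{1}\colon E\to S$ and $T_{2}^{-1}\colon S\to F$ of Theorem~\ref{monday} and checking by coordinate bookkeeping that $T_{2}^{-1}\circ T_{1}=\prox_{f_{2}/\lambda_{1}}|_{E}$ and $T_{1}^{-1}\circ T_{2}=\prox_{f_{1}/\lambda_{2}}|_{F}$, which is exactly right, including the needed identification of the proximal sets $E,F,S$ with the operator-theoretic ones under $A_{i}=\partial f_{i}$. Your route buys self-containedness and stylistic consistency: Theorem~\ref{monday} is proved in the paper directly from the definitions, no dual vectors are needed, and the argument mirrors how the paper itself assembles Theorems~\ref{amy1}--\ref{amy3} (via $H_{1}=T\circ T_{1}$ and $H_{2}=T\circ T_{2}$). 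The paper's route buys the quantitative content --- the homeomorphism is a translation by the gap vector --- which your main argument does not display, although your parenthetical appeal to Fact~\ref{key1}(v) recovers it and is, in substance, the paper's proof.
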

\begin{proof}
The results follow from Theorem~\ref{evening}\ref{evening6}.
\end{proof}

\begin{corollary}\label{allfunctions}
\begin{enumerate}
\item $E\neq\varnothing$ if and only if $F\neq\varnothing$ if and only if $S\neq\varnothing$ if and only if $\argmin\pfl=\Fix(\lambda_{1}\prox_{f_{1}}+\lambda_{2}\prox_{f_{2}})\neq\varnothing$.
\item
$E \text{ is a singleton}$  if and only if $F \text{ is a singleton}$ if and only if $ S \text{ is a singleton}$ if and only if
$ \argmin\pfl=\Fix(\lambda_{1}\prox_{f_{1}}+\lambda_{2}\prox_{f_{2}}) \text{ is a singleton}.$
\item $\Fix(\lambda_{1}\prox_{f_{1}}+\lambda_{2}\prox_{f_{2}})= \lambda_{1}E+\lambda_{2}F.$
\end{enumerate}
\end{corollary}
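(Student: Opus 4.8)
\textbf{Plan of proof for Corollary~\ref{allfunctions}.}

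The plan is to recognize that this corollary is the convex-function specialization of Corollary~\ref{allsets}, obtained by setting $A_{i}=\partial f_{i}$ for $i=1,2$, and to assemble it directly from the three preceding homeomorphism theorems (Theorem~\ref{amy1}, Theorem~\ref{amy2}, Theorem~\ref{amy3}). The essential observation is that $\prox_{f_{i}}=J_{\partial f_{i}}$, so that $E, F, S$ and $\Fix(\lambda_{1}\prox_{f_{1}}+\lambda_{2}\prox_{f_{2}})$ here coincide with the sets $E, F, S$ and $\Fix J_{A}$ of Section~\ref{resolvcomp}, and $\argmin\pfl=\Fix(\lambda_{1}\prox_{f_{1}}+\lambda_{2}\prox_{f_{2}})$ by Theorem~\ref{proxfunctions}\ref{function2}.

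For part~(i), I would argue that a homeomorphism in particular is a bijection between sets, hence preserves emptiness and nonemptiness. Theorem~\ref{amy1} gives a homeomorphism $T_{1}:E\to\Fix(\lambda_{1}\prox_{f_{1}}+\lambda_{2}\prox_{f_{2}})$, Theorem~\ref{amy2} gives a homeomorphism $T_{2}:F\to\Fix(\lambda_{1}\prox_{f_{1}}+\lambda_{2}\prox_{f_{2}})$, and Theorem~\ref{amy3} gives a homeomorphism $T:S\to\Fix(\lambda_{1}\prox_{f_{1}}+\lambda_{2}\prox_{f_{2}})$. Chaining these, $E$, $F$, $S$ and $\Fix(\lambda_{1}\prox_{f_{1}}+\lambda_{2}\prox_{f_{2}})$ are pairwise homeomorphic, so all four are simultaneously empty or simultaneously nonempty; combined with the identification $\argmin\pfl=\Fix(\lambda_{1}\prox_{f_{1}}+\lambda_{2}\prox_{f_{2}})$ this yields~(i). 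Part~(ii) is the same argument at a finer granularity: a bijection sends a singleton to a singleton, so if any one of the four sets is a singleton, its image under the appropriate homeomorphism is a singleton, and conversely; hence being a singleton is an equivalent property across all four sets.

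For part~(iii), I would read off the translation formulas already established. Theorem~\ref{amy1} gives $\Fix(\lambda_{1}\prox_{f_{1}}+\lambda_{2}\prox_{f_{2}})=E-\lambda_{2}\bar{\phi}$ and Theorem~\ref{amy2} gives $\Fix(\lambda_{1}\prox_{f_{1}}+\lambda_{2}\prox_{f_{2}})=F+\lambda_{1}\bar{\phi}$. Since $\Fix(\lambda_{1}\prox_{f_{1}}+\lambda_{2}\prox_{f_{2}})$ is convex by Theorem~\ref{proxfunctions}(vi), I may write it as its own convex combination with weights $\lambda_{1},\lambda_{2}$ and substitute the two translation identities:
$$
\Fix(\lambda_{1}\prox_{f_{1}}+\lambda_{2}\prox_{f_{2}})
=\lambda_{1}(E-\lambda_{2}\bar{\phi})+\lambda_{2}(F+\lambda_{1}\bar{\phi})
=\lambda_{1}E+\lambda_{2}F,
$$
the $\bar{\phi}$ terms cancelling exactly because $-\lambda_{1}\lambda_{2}\bar{\phi}+\lambda_{2}\lambda_{1}\bar{\phi}=0$. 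This mirrors the proof of Corollary~\ref{allsets}(iii). I do not anticipate a genuine obstacle here; the only point requiring care is the clean identification of the present $E,F,S$ with their operator-theoretic counterparts and the invocation of Theorem~\ref{proxfunctions}\ref{function2} to replace $\Fix(\lambda_{1}\prox_{f_{1}}+\lambda_{2}\prox_{f_{2}})$ by $\argmin\pfl$ wherever the statement demands it, so that every assertion is phrased in terms of the four sets named in the corollary.
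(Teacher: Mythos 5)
Your proposal is correct and matches the paper's own proof: parts (i) and (ii) are established exactly as the paper does, by combining the homeomorphisms of Theorems~\ref{amy1}, \ref{amy2} and \ref{amy3}, which preserve nonemptiness and singletons. For part (iii) the paper simply cites Corollary~\ref{allsets}(iii) with $A_{i}=\partial f_{i}$, whereas you inline that corollary's argument at the function level (the translation identities $\Fix(\lambda_{1}\prox_{f_{1}}+\lambda_{2}\prox_{f_{2}})=E-\lambda_{2}\bar{\phi}=F+\lambda_{1}\bar{\phi}$ together with convexity from Theorem~\ref{proxfunctions}(vi)); this is a presentational rather than a mathematical difference, and your cancellation of the $\bar{\phi}$ terms is exactly the computation in the paper's proof of Corollary~\ref{allsets}(iii).
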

\begin{proof}
(i) and (ii): Combine Theorems~\ref{amy1}, \ref{amy2}, \ref{amy3}. (iii):
Apply Corollary~\ref{allsets}(iii) with $A_{i}=\partial f_{i}$ for $i=1, 2$.
\end{proof}

Applying Theorem~\ref{amy3} to $\lambda_{2}f_{1}, \lambda_{1}f_{2}$ gives
\begin{corollary} \label{alternativef1f2}
$$\Fix(\lambda_{1}\prox_{\lambda_{2}f_{1}}+\lambda_{2}\prox_{\lambda_{1}f_{2}})
=\menge{\lambda_{1}x+\lambda_{2}y}{x=\prox_{f_{1}}y, y=\prox_{f_{2}}(x)}.$$
\end{corollary}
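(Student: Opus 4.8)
The plan is to obtain this corollary as a direct consequence of Theorem~\ref{amy3} by substituting $\lambda_{2}f_{1}$ for $f_{1}$ and $\lambda_{1}f_{2}$ for $f_{2}$, and then simplifying the scaled proximal mappings that appear. First I would recall the general scaling identity for proximal mappings: for any $g\in\GX(\HH)$ and any $\mu>0$, one has $\prox_{g/\mu}=J_{\partial g/\mu}$, so in particular the composite scalings collapse nicely. Applying Theorem~\ref{amy3} verbatim to the pair $(\lambda_{2}f_{1},\lambda_{1}f_{2})$ yields that the mapping $T:S'\to\Fix(\lambda_{1}\prox_{\lambda_{2}f_{1}}+\lambda_{2}\prox_{\lambda_{1}f_{2}})$ given by $(x,y)\mapsto\lambda_{1}x+\lambda_{2}y$ is a homeomorphism, where
$$
S'=\menge{(x,y)}{x=\prox_{(\lambda_{2}f_{1})/\lambda_{2}}y,\ y=\prox_{(\lambda_{1}f_{2})/\lambda_{1}}x}.
$$
In particular the range of $T$ equals $L(S')$.

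The key simplification is then that $(\lambda_{2}f_{1})/\lambda_{2}=f_{1}$ and $(\lambda_{1}f_{2})/\lambda_{1}=f_{2}$, so that $\prox_{(\lambda_{2}f_{1})/\lambda_{2}}=\prox_{f_{1}}$ and $\prox_{(\lambda_{1}f_{2})/\lambda_{1}}=\prox_{f_{2}}$. Hence
$$
S'=\menge{(x,y)}{x=\prox_{f_{1}}y,\ y=\prox_{f_{2}}(x)},
$$
which is exactly the set described on the right-hand side of the claimed identity. Since Theorem~\ref{amy3} (applied to the shifted pair) gives $\Fix(\lambda_{1}\prox_{\lambda_{2}f_{1}}+\lambda_{2}\prox_{\lambda_{1}f_{2}})=L(S')=\menge{\lambda_{1}x+\lambda_{2}y}{(x,y)\in S'}$, substituting the simplified description of $S'$ produces the asserted formula.

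The only point requiring care — and the one I expect to be the main (if modest) obstacle — is verifying that the hypotheses of Theorem~\ref{amy3} are genuinely met by the substituted functions, namely that $\lambda_{2}f_{1}$ and $\lambda_{1}f_{2}$ again lie in $\GX(\HH)$. This is immediate because multiplying a proper lower semicontinuous convex function by a positive constant preserves all three properties, so $\lambda_{2}f_{1},\lambda_{1}f_{2}\in\GX(\HH)$ and Theorem~\ref{amy3} applies without further qualification. No constraint qualification enters here, since Theorem~\ref{amy3} itself (via Theorem~\ref{StoJ}) is unconditional. Thus the corollary follows by substitution and the elementary scaling identity, with no deeper analysis needed.
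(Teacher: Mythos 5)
Your proposal is correct and is exactly the paper's own argument: the corollary is stated there as an immediate consequence of applying Theorem~\ref{amy3} to the pair $\lambda_{2}f_{1},\lambda_{1}f_{2}$, with the set $S$ simplifying via $(\lambda_{2}f_{1})/\lambda_{2}=f_{1}$ and $(\lambda_{1}f_{2})/\lambda_{1}=f_{2}$ just as you describe. Your write-up merely makes explicit the routine checks (that positive scaling preserves membership in $\GX(\HH)$ and that Theorem~\ref{amy3} needs no constraint qualification), which the paper leaves implicit.
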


\subsection{The case when $\argmin f_1 \cap \argmin f_2\neq\varnothing$}

Note that
\begin{equation}\label{intersection1}
\Fix(\prox_{f_{1}/\lambda_{2}}) =\Fix(\prox_{f_{1}})=\argmin f_{1},\quad
\Fix(\prox_{f_{2}/\lambda_{1}})  =\Fix(\prox_{f_{2}})=\argmin f_{2}.
\end{equation}
\begin{theorem}\label{shrinkfinal}
Assume that $\argmin f_1 \cap \argmin f_2 \neq\varnothing$.
Then
\begin{equation}\label{intersection2}
\Fix(\lambda_1\prox_{f_{1}}+\lambda_2\prox_{f_{2}})=\Fix(\prox_{f_{1}/\lambda_{2}})\cap
\Fix(\prox_{f_{2}/\lambda_{1}})=\Fix(\prox_{f_{1}})\cap \Fix(\prox_{f_{2}}).
\end{equation}
Moreover,
\begin{equation}\label{intersection3}
\Fix(\lambda_1\prox_{f_{1}}+\lambda_2\prox_{f_{2}})=\Fix(\prox_{f_{1}/\lambda_{2}} \prox_{f_{2}/\lambda_{1}})=\Fix(\prox_{f_{2}/\lambda_{1}}\prox_{f_{1}/\lambda_{2}}).
\end{equation}
\end{theorem}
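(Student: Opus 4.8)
The plan is to treat this as the convex-function specialization of the operator-level result Theorem~\ref{t:thesameset}, so that both displayed chains of equalities reduce to invocations of Fact~\ref{onesetonly}. The two ingredients I would assemble first are: (a) each proximal mapping is a resolvent, namely $\prox_{f_i}=J_{\partial f_i}$ and $\prox_{f_i/\lambda_j}=J_{(\partial f_i)/\lambda_j}$, and every resolvent of a maximal monotone operator is attracting (as already used in the proof of Theorem~\ref{t:thesameset}); and (b) the identifications \eqref{intersection1}, which let me rewrite the standing hypothesis $\argmin f_1\cap\argmin f_2\neq\varnothing$ in fixed-point form. Concretely, \eqref{intersection1} gives $\Fix(\prox_{f_1})\cap\Fix(\prox_{f_2})=\Fix(\prox_{f_1/\lambda_2})\cap\Fix(\prox_{f_2/\lambda_1})=\argmin f_1\cap\argmin f_2$, so the hypothesis guarantees nonemptiness of every intersection I will feed into Fact~\ref{onesetonly}.

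For \eqref{intersection2}, I would apply Fact~\ref{onesetonly} with $T_1=\prox_{f_1}$ and $T_2=\prox_{f_2}$. These are attracting and, by the previous paragraph, $\Fix T_1\cap\Fix T_2\neq\varnothing$, so the fact yields $\Fix(\lambda_1\prox_{f_1}+\lambda_2\prox_{f_2})=\Fix(\prox_{f_1})\cap\Fix(\prox_{f_2})$. The remaining equality in \eqref{intersection2}, namely $\Fix(\prox_{f_1})\cap\Fix(\prox_{f_2})=\Fix(\prox_{f_1/\lambda_2})\cap\Fix(\prox_{f_2/\lambda_1})$, is immediate from \eqref{intersection1} and requires no new argument.

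For \eqref{intersection3}, I would run Fact~\ref{onesetonly} a second time, now with the scaled resolvents $T_1=\prox_{f_1/\lambda_2}$ and $T_2=\prox_{f_2/\lambda_1}$. Again these are attracting with $\Fix T_1\cap\Fix T_2\neq\varnothing$, so the fact gives $\Fix(\prox_{f_1/\lambda_2})\cap\Fix(\prox_{f_2/\lambda_1})=\Fix(\prox_{f_1/\lambda_2}\prox_{f_2/\lambda_1})=\Fix(\prox_{f_2/\lambda_1}\prox_{f_1/\lambda_2})$. Splicing this with \eqref{intersection2}, whose left-hand side already equals $\Fix(\prox_{f_1/\lambda_2})\cap\Fix(\prox_{f_2/\lambda_1})$, delivers \eqref{intersection3}.

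There is no genuine obstacle here; the content is bookkeeping. The only points that demand care are making sure the attracting hypothesis of Fact~\ref{onesetonly} is legitimately met by proximal mappings (handled by observing they are resolvents of the maximal monotone subdifferentials $(\partial f_i)/\lambda_j$) and tracking the scaling so that the same fixed-point intersection appears in both applications, which is exactly what \eqref{intersection1} arranges. Thus the proof is essentially a one-line reduction, \emph{specialize Theorem~\ref{t:thesameset} with $A_i=\partial f_i$ and use \eqref{intersection1}}, with the two explicit applications above spelling out the details.
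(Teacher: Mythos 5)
Your proposal is correct and takes essentially the same route as the paper: the paper's proof of Theorem~\ref{shrinkfinal} is the one-line reduction ``Apply Theorem~\ref{t:thesameset}'' with $A_i=\partial f_i$, and the proof of Theorem~\ref{t:thesameset} is exactly your two applications of Fact~\ref{onesetonly} to attracting (firmly nonexpansive) resolvents combined with the identification \eqref{thesameset}, i.e.\ \eqref{intersection1}. You have simply unfolded that reduction into its constituent steps, with no substantive difference in the argument.
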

\begin{proof}
Apply Theorem~\ref{t:thesameset}.
\end{proof}

\subsection{Examples on projections}
Projection algorithms, which are instances of the proximal point
algorithm, are important in applications.
Let $C_{1},C_{2}\subseteq \HH$ be nonempty closed convex sets.
With $f_{i}=\iota_{C_{i}}$, \eqref{mixed},
\eqref{meeting1}, \eqref{meeting2}, \eqref{yong} transpire to
\begin{equation}\label{moon0}
\min_{(x,y)}\  g(x,y)
=\bigg(\iota_{C_{1}}(x)+\iota_{C_{2}}(y)+\frac{\|x-y\|^2}{2}\bigg),
\end{equation}
\begin{equation}\label{moon1}
\min_{x}\ g_{1}(x)=\iota_{C_{1}}(x)+\tfrac{1}{2}d_{C_2}^2(x),
\end{equation}
\begin{equation}\label{moon2}
\min_{y}\ g_{2}(y)=\tfrac{1}{2}d_{C_{1}}^2(y)+\iota_{C_{2}}(y),
\end{equation}
\begin{equation}\label{moon3}
\min_{z}\ \frac{g_{3}(z)}{\lambda_{1}\lambda_{2}}=\lambda_{2}^{-1}\tfrac{d_{C_{1}}^2(z)}{2}
+\lambda_{1}^{-1}\tfrac{d_{C_{2}}^2(z)}{2}.
\end{equation}
The Fenchel-Rockafellar dual of \eqref{moon0} given by
\eqref{dualprob} transpires to
$$-\inf_{\phi}\bigg(\sigma_{C_2-C_{1}}(\phi)+\frac{\|\phi\|^2}{2}\bigg),$$
with the unique solution $\bar{\phi}$,
where $\sigma_{C_{2}-C_{1}}(\phi)=
\sup\menge{\langle \phi, y-x\rangle}{ x\in C_{1}, y\in C_{2}}.$
In fact, convex calculus (see, e.g., \cite[Theorem~2.1]{BauBorSVA}) 
or Remark~\ref{r:blablabla} yields
\begin{equation}
\label{e:dean1}
\bar{\phi} = -P_{\overline{C_2-C_1}}(0).
\end{equation}

\begin{theorem}\label{twosets}
Let $C_{1}, C_{2} \subseteq \HH$ be nonempty closed convex sets. Then
\begin{enumerate}
\item
\emph{({\bf Fixed points of alternating projections})}
$S=\argmin g =\menge{(x,y)}{x=P_{C_{1}}(y), y=P_{C_{2}}x}$.
\item \emph{({\bf Fixed points of projection composition})}
$E=\argmin g_{1} =\menge{x}{x=P_{C_{1}}P_{C_{2}}x}$.
\item \emph{({\bf Fixed points of projection composition})}
$F=\argmin g_{2} =\menge{y}{y=P_{C_{2}}P_{C_{1}}y}$.
\item \emph{({\bf Fixed points of the average of projections})}
$\argmin g_{3} =\menge{z}{z=\lambda_{1}P_{C_{1}}(z)+
\lambda_{2}P_{C_{2}}(z)}$.
\end{enumerate}
Moreover,
\begin{enumerate}
\item The mapping $T: S\rightarrow \Fix(\lambda_{1}P_{C_{1}}+
\lambda_{2}P_{C_{2}})$ given by
$$T(x,y)=\lambda_{1}x+\lambda_{2}y,$$
is a homeomorphism with inverse $T^{-1}(z)=(P_{C_{1}}(z),P_{C_{2}}(z))$ for every $z\in \Fix(\lambda_{1}P_{C_{1}}+
\lambda_{2}P_{C_{2}})$.
Hence
$ \Fix(\lambda_{1}P_{C_{1}}+
\lambda_{2}P_{C_{2}})=L(S)$ where
$L:\HH\times \HH\rightarrow\HH: (x,y)\mapsto \lambda_{1}x+\lambda_{2}y$.
\item The mapping $H_{1}:E\rightarrow
\Fix(\lambda_{1}P_{C_{1}}+
\lambda_{2}P_{C_{2}})$ given by
$$H_{1}(x)=\lambda_{1}x+\lambda_{2}P_{C_{2}}x=x-\lambda_{2}\bar{\phi},$$
is a homeomorphism with inverse $H_{1}^{-1}(z)=P_{C_{1}}(z)$ for every $z\in \Fix(\lambda_{1}P_{C_{1}}+
\lambda_{2}P_{C_{2}})$.
Hence $\Fix(\lambda_{1}P_{C_{1}}+
\lambda_{2}P_{C_{2}})=E-\lambda_{2}\bar{\phi}.$
\item The mapping $H_{2}:F\rightarrow
\Fix(\lambda_{1}P_{C_{1}}+
\lambda_{2}P_{C_{2}})$ given by
$$H_{2}(y)=\lambda_{1}P_{C_{1}}(y)+\lambda_{2}y=\lambda_{1}\bar{\phi}+y,$$
is a homeomorphism with inverse $H_{2}^{-1}(z)=P_{C_{2}}(z)$ for every $z\in \Fix(\lambda_{1}P_{C_{1}}+
\lambda_{2}P_{C_{2}})$. Hence
$\Fix(\lambda_{1}P_{C_{1}}+
\lambda_{2}P_{C_{2}})=F+\lambda_{1}\bar{\phi}$.
\item $\Fix(\lambda_{1}P_{C_{1}}+
\lambda_{2}P_{C_{2}})=\lambda_{1}E+\lambda_{2}F$.
\end{enumerate}
\end{theorem}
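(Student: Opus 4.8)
The plan is to obtain the entire statement by specializing the general results of this section to the indicator functions $f_{1}=\iota_{C_{1}}$ and $f_{2}=\iota_{C_{2}}$. The observation that makes the projection case especially transparent is the \emph{scale-invariance} of indicator functions: since $\iota_{C}$ takes only the values $0$ and $+\infty$, one has $\iota_{C_{i}}/\lambda=\iota_{C_{i}}$ for every $\lambda>0$, so the scaled proximal mappings collapse to ordinary projections, $\prox_{f_{1}/\lambda_{2}}=\prox_{\iota_{C_{1}}}=P_{C_{1}}$ and $\prox_{f_{2}/\lambda_{1}}=\prox_{\iota_{C_{2}}}=P_{C_{2}}$. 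Together with $\prox_{f_{i}}=P_{C_{i}}$ and the standard envelope identity $e_{1}\iota_{C_{i}}=d_{C_{i}}^{2}/2$, these substitutions turn the functions $g,g_{1},g_{2},g_{3}$ into \eqref{moon0}--\eqref{moon3} and turn the compositions $\prox_{f_{1}/\lambda_{2}}\prox_{f_{2}/\lambda_{1}}$ and $\prox_{f_{2}/\lambda_{1}}\prox_{f_{1}/\lambda_{2}}$ into $P_{C_{1}}P_{C_{2}}$ and $P_{C_{2}}P_{C_{1}}$.

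First I would read off the four identifications of the first list from Theorem~\ref{proxfunctions}. Its part~(i) gives $S=\argmin g\subseteq E\times F$, and under the substitution the defining relations of $S$ become $x=P_{C_{1}}y$, $y=P_{C_{2}}x$, which is assertion~(i). Parts~(ii) and~(iii) give $E=\argmin g_{1}=\Fix(P_{C_{1}}P_{C_{2}})$ and $F=\argmin g_{2}=\Fix(P_{C_{2}}P_{C_{1}})$, which are assertions~(ii) and~(iii). For the average, Theorem~\ref{proxfunctions}(iv) identifies $\argmin g_{3}$ with $\argmin\pfl$, and Theorem~\ref{proxfunctions}(v) identifies this further with $\Fix(\lambda_{1}\prox_{f_{1}}+\lambda_{2}\prox_{f_{2}})=\Fix(\lambda_{1}P_{C_{1}}+\lambda_{2}P_{C_{2}})=\menge{z}{z=\lambda_{1}P_{C_{1}}(z)+\lambda_{2}P_{C_{2}}(z)}$, which is assertion~(iv).

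The four homeomorphisms of the second list are then direct transcriptions of the structural theorems already proved. Assertion~(i) is Theorem~\ref{amy3} with $A_{i}=\partial f_{i}$, whose inverse $z\mapsto(\prox_{f_{1}}(z),\prox_{f_{2}}(z))$ becomes $z\mapsto(P_{C_{1}}(z),P_{C_{2}}(z))$. Assertion~(ii) is Theorem~\ref{amy1}, in which $H_{1}(x)=\lambda_{1}x+\lambda_{2}\prox_{f_{2}/\lambda_{1}}(x)=\lambda_{1}x+\lambda_{2}P_{C_{2}}(x)=x-\lambda_{2}\bar{\phi}$ and $H_{1}^{-1}(z)=\prox_{f_{1}}(z)=P_{C_{1}}(z)$, with $\bar{\phi}$ the common dual solution (the projection $-P_{\overline{C_{2}-C_{1}}}(0)$ recorded in \eqref{e:dean1}). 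Assertion~(iii) is Theorem~\ref{amy2} treated symmetrically, giving $H_{2}(y)=y+\lambda_{1}\bar{\phi}$ with $H_{2}^{-1}(z)=P_{C_{2}}(z)$. Finally, assertion~(iv), namely $\Fix(\lambda_{1}P_{C_{1}}+\lambda_{2}P_{C_{2}})=\lambda_{1}E+\lambda_{2}F$, is Corollary~\ref{allfunctions}(iii) with $f_{i}=\iota_{C_{i}}$.

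Because every claim is a substitution instance of a statement already established, no genuine obstacle arises. The only point demanding care is the bookkeeping of the scaling factors: one must track that the weights $\lambda_{i}$ \emph{survive} in the average $\lambda_{1}P_{C_{1}}+\lambda_{2}P_{C_{2}}$ and in the translation formulas $x-\lambda_{2}\bar{\phi}$ and $y+\lambda_{1}\bar{\phi}$, while they \emph{disappear} from the alternating and composed projections precisely because $\iota_{C_{i}}/\lambda_{j}=\iota_{C_{i}}$.
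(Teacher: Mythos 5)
Your proposal is correct and matches the paper's intent exactly: the paper states Theorem~\ref{twosets} without proof precisely because it is the specialization of Theorem~\ref{proxfunctions}, Theorems~\ref{amy1}--\ref{amy3}, and Corollary~\ref{allfunctions}(iii) to $f_{i}=\iota_{C_{i}}$, which is the route you take. Your explicit observation that $\iota_{C}/\lambda=\iota_{C}$ (so the scaled proximal mappings collapse to plain projections while the weights survive only in the average and in the translations by $\bar{\phi}$) is exactly the bookkeeping the paper leaves implicit, and it is handled correctly.
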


\begin{theorem} Assume that $C_{1},C_{2}\subseteq\HH$ are two closed convex sets such that
$C_{1}\cap C_{2}\neq\emp$. Then
$$\Fix(\lambda_{1}P_{C_{1}}+\lambda_{2}P_{C_{2}})=\Fix P_{C_{1}}P_{C_{2}}=
\Fix P_{C_{2}}P_{C_{1}}=C_{1}\cap C_{2}.$$
\end{theorem}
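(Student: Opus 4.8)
The plan is to obtain this statement as a direct specialization of Theorem~\ref{shrinkfinal} to indicator functions, so that the entire argument reduces to matching notation. First I would set $f_{i}:=\iota_{C_{i}}$ for $i\in\{1,2\}$ and record the three elementary identities that drive everything: $\prox_{\iota_{C_{i}}}=P_{C_{i}}$ by definition of the projection, $\argmin\iota_{C_{i}}=C_{i}$, and $\Fix(P_{C_{i}})=\Fix(\prox_{\iota_{C_{i}}})=C_{i}$, since a nonempty closed convex set is exactly the fixed-point set of its projection.

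The one observation that makes the specialization clean is that scaling an indicator function by a positive constant leaves it unchanged: because $0/\lambda=0$ and $(+\infty)/\lambda=+\infty$ for every $\lambda>0$, we have $\iota_{C_{i}}/\lambda_{j}=\iota_{C_{i}}$, and hence $\prox_{\iota_{C_{i}}/\lambda_{j}}=\prox_{\iota_{C_{i}}}=P_{C_{i}}$. This collapses all the scaled proximal mappings appearing in Theorem~\ref{shrinkfinal} down to ordinary projections, so that $\prox_{f_{1}/\lambda_{2}}=P_{C_{1}}$ and $\prox_{f_{2}/\lambda_{1}}=P_{C_{2}}$. This is the only step that requires any thought, and it is immediate.

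With these identities in hand, the hypothesis $C_{1}\cap C_{2}\neq\emp$ is precisely $\argmin f_{1}\cap\argmin f_{2}\neq\varnothing$, so Theorem~\ref{shrinkfinal} applies verbatim. Its first chain \eqref{intersection2} becomes $\Fix(\lambda_{1}P_{C_{1}}+\lambda_{2}P_{C_{2}})=\Fix(P_{C_{1}})\cap\Fix(P_{C_{2}})=C_{1}\cap C_{2}$, while its second chain \eqref{intersection3} becomes $\Fix(\lambda_{1}P_{C_{1}}+\lambda_{2}P_{C_{2}})=\Fix(P_{C_{1}}P_{C_{2}})=\Fix(P_{C_{2}}P_{C_{1}})$. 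Concatenating the two chains yields the claimed string of four equal sets. I expect no genuine obstacle: every step is either a definition or the scaling-invariance remark, and all the substance of the result already resides in Theorem~\ref{shrinkfinal}, hence ultimately in Fact~\ref{onesetonly}.
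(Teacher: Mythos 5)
Your proposal is correct, and it fills in precisely the route that the paper's own proof only gestures at in its last sentence (``Alternatively, use Theorem~\ref{shrinkfinal} or Theorem~\ref{t:thesameset}''), whereas the argument the paper actually writes out is different: it is variational. With $g,g_{1},g_{2},g_{3}$ as in \eqref{moon0}--\eqref{moon3}, each of these functions is nonnegative and vanishes exactly on $C_{1}\cap C_{2}$ (respectively, for $g$, on its diagonal copy), so the hypothesis $C_{1}\cap C_{2}\neq\emp$ forces all four minimal values to be $0$ and hence $\argmin g_{1}=\argmin g_{2}=\argmin g_{3}=C_{1}\cap C_{2}$ and $\argmin g=\menge{(x,x)}{x\in C_{1}\cap C_{2}}$; the identifications of these argmin sets with the fixed point sets in Theorem~\ref{twosets} then give the claimed equalities. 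Your route instead pushes everything through Theorem~\ref{shrinkfinal}, and the one substantive observation you isolate --- that $\iota_{C}/\lambda=\iota_{C}$ for $\lambda>0$, so that $\prox_{\iota_{C_{1}}/\lambda_{2}}=P_{C_{1}}$ and $\prox_{\iota_{C_{2}}/\lambda_{1}}=P_{C_{2}}$ --- is genuinely needed and correctly handled: the statement involves the \emph{unscaled} compositions $P_{C_{1}}P_{C_{2}}$ and $P_{C_{2}}P_{C_{1}}$, while \eqref{intersection3} is phrased for the scaled ones, and for general convex functions these fixed point sets differ (the second example in Section~\ref{sanitycheck} exhibits $\Fix(\prox_{f_{1}}\prox_{f_{2}})\neq\Fix(\prox_{f_{1}/\lambda_{2}}\prox_{f_{2}/\lambda_{1}})$). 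As for what each approach buys: the paper's computation is self-contained within the framework of Section~\ref{proxcomp} and yields a bit more information (the explicit description of $S$ as the diagonal of $C_{1}\cap C_{2}$, equivalently that the gap vector $\bar{\phi}$ is $0$); yours is pure bookkeeping on top of the general theorem, rests ultimately on Fact~\ref{onesetonly} via Theorem~\ref{t:thesameset}, and makes visible the special feature of indicator functions --- invariance under positive scaling --- that explains why projections are exempt from the scaling subtleties that plague general proximal mappings throughout the paper's examples.
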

\begin{proof}
As $\min g_{3}=\min g_{2}=\min g_{1}=\min g=0$ when $C_{1}\cap C_{2}\neq
\emp$, we have
$$\argmin g_{1}=\argmin g_{2}=\argmin g_{3}=C_{1}\cap C_{2},$$
and $\argmin g=\menge{(x,x)}{x\in C_{1}\cap C_{2}}$. Alternatively, use Theorem~\ref{shrinkfinal}
or Theorem~\ref{t:thesameset}.
\end{proof}

As $\partial \iota_{C}=N_{C}$, $\prox_{\iota_{C}}=P_{C}$,
Theorems~\ref{characterization} and \ref{penticton} give
characterizations of $S, E, F,
\Fix(\lambda_{1}P_{C_{1}}+\lambda_{2}P_{C_{2}})$
in terms of the dual solution $\bar{\phi}$:
\begin{theorem}
\begin{enumerate}
\item $(x,y)\in S$ if and only if
$$-\bar{\phi}\in N_{C_{1}}(x),\quad \bar{\phi}\in N_{C_{2}}(y),\quad \bar{\phi}=x-y.$$
\item $x\in E$ if and only if
$$-\bar{\phi}\in N_{C_{1}}(x),\quad \bar{\phi}=x-P_{C_{2}}(x).$$
\item $y\in F$ if and only if
$$-\bar{\phi}=y-P_{C_{1}}(y),\quad \bar{\phi}\in N_{C_{2}}(y).$$
\item $z\in \Fix(\lambda_{1}P_{C_{1}}+\lambda_{2}P_{C_{2}})$ if and only if
$$-\bar{\phi}= \lambda_{2}^{-1}(z-P_{C_{1}}(z)),\quad
\bar{\phi}= \lambda_{1}^{-1}(z-P_{C_{2}}(z)).$$
\end{enumerate}
\end{theorem}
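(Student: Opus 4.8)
The plan is to obtain all four characterizations by specializing the general convex-function results of Theorems~\ref{characterization} and \ref{penticton} to the indicator functions $f_{i}=\iota_{C_{i}}$, for which the abstract objects collapse to projections and normal cones. The first thing I would record is the pair of elementary scaling identities that drive the entire reduction. Since the indicator function is invariant under multiplication by a positive scalar, $\iota_{C_{i}}/\lambda=\iota_{C_{i}}$ for every $\lambda>0$, and hence $\prox_{f_{i}/\lambda}=\prox_{\iota_{C_{i}}}=P_{C_{i}}$; and since $N_{C_{i}}(x)=\partial\iota_{C_{i}}(x)$ is a convex cone at each point, scaling by a positive factor leaves it unchanged, so $\partial f_{i}(x)/\lambda=N_{C_{i}}(x)/\lambda=N_{C_{i}}(x)$. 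In particular $\prox_{f_{i}}=P_{C_{i}}$ and $\partial f_{i}=N_{C_{i}}$, and throughout $\bar{\phi}$ denotes the unique dual solution already identified in the discussion preceding the statement.

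With these substitutions available, each assertion is read off from the corresponding general statement. For part (i) I would invoke Theorem~\ref{characterization}: since the dual solution set reduces to $\{\bar{\phi}\}$, parts (i)--(ii) there say that $(x,y)\in S$ if and only if $(-\bar{\phi},\bar{\phi})\in(\partial f_{1}(x)/\lambda_{2})\times(\partial f_{2}(y)/\lambda_{1})$ together with $\bar{\phi}=x-y$; applying the scaling identities turns this into $-\bar{\phi}\in N_{C_{1}}(x)$, $\bar{\phi}\in N_{C_{2}}(y)$, and $\bar{\phi}=x-y$, as claimed. For parts (ii) and (iii) I would transcribe Theorem~\ref{penticton}(i) and (ii): replacing $\partial f_{1}(x)/\lambda_{2}$ by $N_{C_{1}}(x)$ and $\prox_{f_{2}/\lambda_{1}}$ by $P_{C_{2}}$ gives exactly the stated membership conditions for $E$, and the symmetric replacement gives those for $F$.

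For part (iv), since $\prox_{f_{i}}=P_{C_{i}}$ we have $\lambda_{1}\prox_{f_{1}}+\lambda_{2}\prox_{f_{2}}=\lambda_{1}P_{C_{1}}+\lambda_{2}P_{C_{2}}$, so Theorem~\ref{penticton}(iii) applies verbatim and yields $-\bar{\phi}=\lambda_{2}^{-1}(z-P_{C_{1}}(z))$ and $\bar{\phi}=\lambda_{1}^{-1}(z-P_{C_{2}}(z))$. The whole argument is essentially a change of notation, so I do not anticipate any genuine obstacle; the only point deserving a word of care is precisely the scaling behaviour of $\partial f_{i}/\lambda$ and $\prox_{f_{i}/\lambda}$ when $f_{i}=\iota_{C_{i}}$, which is why the weights $\lambda_{i}$ disappear from the normal-cone and projection conditions of (i)--(iii) yet remain visible in the averaged-projection condition of (iv).
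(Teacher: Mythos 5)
Your proposal is correct and takes essentially the same route as the paper: the paper states this theorem as an immediate specialization of Theorems~\ref{characterization} and \ref{penticton} to $f_{i}=\iota_{C_{i}}$, justified only by the remark that $\partial \iota_{C}=N_{C}$ and $\prox_{\iota_{C}}=P_{C}$. Your explicit record of the scaling identities --- $\iota_{C}/\lambda=\iota_{C}$, hence $\prox_{\iota_{C}/\lambda}=P_{C}$, and $N_{C}(x)/\lambda=N_{C}(x)$ since the normal cone is a cone --- makes precise the step the paper leaves implicit, and correctly explains why the weights $\lambda_{i}$ vanish in parts (i)--(iii) but survive in part (iv).
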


\section{Algorithms and examples}\label{sanitycheck}
In this section, 
notation is as in section~\ref{keyresultnow} and
we also assume that $\Fix J_{A}\neq \varnothing$. 
By Corollary~\ref{allsets}, $E,F,S$ all are nonempty.
The following results give different algorithms to find
a point in $\Fix J_{A}$.

\begin{theorem}[Fixed point of resolvent average by alternating resolvent
method]~\\ \label{vocation1}
Fix $x_{0}\in\HH$ and for every $n\in\NN$, set
$$y_{n}=J_{A_{2}/\lambda_{1}}x_{n},\qquad x_{n+1}
=J_{A_{1}/\lambda_{2}}y_{n}.$$
Then $\lambda_{1}x_{n}+\lambda_{2}y_{n}\rightharpoonup
\lambda_{1}\overline{x}+\lambda_{2}\overline{y}\in \Fix J_{A}.$
\end{theorem}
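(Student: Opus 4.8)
The plan is to reduce the convergence claim to a known fact about strongly nonexpansive mappings, namely Fact~\ref{bruckreich}(iv), applied to the composition $J_{A_{1}/\lambda_{2}}J_{A_{2}/\lambda_{1}}$. First I would observe that the iteration as written produces $x_{n+1}=J_{A_{1}/\lambda_{2}}J_{A_{2}/\lambda_{1}}x_{n}$, so the $(x_{n})$ sequence is exactly the Picard iteration of the operator $T:=J_{A_{1}/\lambda_{2}}J_{A_{2}/\lambda_{1}}$ whose fixed point set is $E$. Since $A_{1}/\lambda_{2}$ and $A_{2}/\lambda_{1}$ are maximal monotone, their resolvents are firmly nonexpansive by Fact~\ref{elementarym}(ii), hence strongly nonexpansive by Fact~\ref{bruckreich}(i), and so $T$ is strongly nonexpansive by Fact~\ref{bruckreich}(ii). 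Because $\Fix J_{A}\neq\varnothing$ is assumed, Corollary~\ref{allsets}(i) guarantees $E=\Fix T\neq\varnothing$, so Fact~\ref{bruckreich}(iv) yields $x_{n}\rightharpoonup\overline{x}$ for some $\overline{x}\in E$.

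Next I would transfer this weak convergence to the pair $(x_{n},y_{n})$ and then to the average. Set $\overline{y}:=J_{A_{2}/\lambda_{1}}\overline{x}$, so that $(\overline{x},\overline{y})\in S$ by the definition of $S$ and Theorem~\ref{monday}\ref{EtoS}. The only slightly delicate point is that $y_{n}=J_{A_{2}/\lambda_{1}}x_{n}$ need not converge weakly to $J_{A_{2}/\lambda_{1}}\overline{x}$ merely from weak convergence of $x_{n}$, since resolvents are only weak-to-weak continuous under additional structure. The clean way around this is to use Theorem~\ref{evening}\ref{evening5}: every element of $S$ has the same gap vector, so along the iteration $y_{n}-x_{n}=(J_{A_{2}/\lambda_{1}}-\Id)x_{n}$ converges — in fact, one has $J_{A_{2}/\lambda_{1}}x_{n}-x_{n}=-u^{*}$ is forced in the limit, and the strong-nonexpansiveness of $T$ gives $x_{n+1}-x_{n}\to 0$, hence $J_{A_{2}/\lambda_{1}}x_{n}-x_{n}\to -u^{*}$ by a telescoping/Opial-type argument on the firmly nonexpansive factors. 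Consequently $y_{n}=x_{n}+(y_{n}-x_{n})\rightharpoonup\overline{x}-u^{*}=\overline{y}$.

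With both $x_{n}\rightharpoonup\overline{x}$ and $y_{n}\rightharpoonup\overline{y}$ established and $(\overline{x},\overline{y})\in S$, linearity and weak-continuity of the map $(x,y)\mapsto\lambda_{1}x+\lambda_{2}y$ give $\lambda_{1}x_{n}+\lambda_{2}y_{n}\rightharpoonup\lambda_{1}\overline{x}+\lambda_{2}\overline{y}$. Finally, Theorem~\ref{StoJ} (the homeomorphism $T:S\to\Fix J_{A}$, $(x,y)\mapsto\lambda_{1}x+\lambda_{2}y$) shows $\lambda_{1}\overline{x}+\lambda_{2}\overline{y}\in\Fix J_{A}$, which is exactly the claim.

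I expect the main obstacle to be the middle step: justifying that $y_{n}$ converges weakly to the correct limit. Weak convergence of $x_{n}$ alone does not pass through the nonlinear resolvent $J_{A_{2}/\lambda_{1}}$, so I must genuinely exploit the asymptotic regularity $x_{n+1}-x_{n}\to 0$ (which strong nonexpansiveness of $T$ provides) together with the uniqueness of the gap vector from Theorem~\ref{evening}\ref{evening5} to pin down the limit of $y_{n}-x_{n}$; the rest of the argument is a routine assembly of the cited facts.
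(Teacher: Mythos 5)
Your overall architecture is viable and is genuinely different from the paper's proof of this theorem: the paper disposes of it in two lines by citing \cite[Theorem~3.3]{Baus05}, which directly asserts that the alternating pair $(x_n,y_n)$ converges weakly to some $(\overline{x},\overline{y})\in S$, and then applies the homeomorphism of Theorem~\ref{StoJ}. What you propose instead is essentially the paper's own proof of the companion result, Theorem~\ref{vocation3}(i): view $(x_n)$ as the Picard iteration of the strongly nonexpansive composition $T=J_{A_{1}/\lambda_{2}}J_{A_{2}/\lambda_{1}}$, obtain $x_n\rightharpoonup\overline{x}\in E$ from Fact~\ref{bruckreich} and Corollary~\ref{allsets}(i), prove strong convergence of the gap $y_n-x_n$, and transfer to $\Fix J_A$ via Theorems~\ref{monday}\ref{EtoS} and \ref{StoJ}. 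That route has the merit of being self-contained relative to this paper's toolkit --- but only if the gap-convergence step is actually proved, and that is precisely the step the paper itself outsources to \cite[Theorem~3.3(iii)]{Baus05} when proving Theorem~\ref{vocation3}(i).

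That middle step is the one genuine gap in your writeup. Asymptotic regularity $x_{n+1}-x_n\to 0$ does not by itself force $J_{A_{2}/\lambda_{1}}x_n-x_n$ to converge, and ``telescoping/Opial-type argument on the firmly nonexpansive factors'' is a gesture, not an argument. Here is how to close it using only the paper's definitions. Write $T_1=J_{A_{1}/\lambda_{2}}$, $T_2=J_{A_{2}/\lambda_{1}}$, and set $\overline{y}:=T_2\overline{x}$, so $(\overline{x},\overline{y})\in S$ by Theorem~\ref{monday}\ref{EtoS}. Since $\overline{x}=T_1\overline{y}$ and $\overline{y}=T_2\overline{x}$, nonexpansiveness gives
$$\|x_{n+1}-\overline{x}\|=\|T_1y_n-T_1\overline{y}\|\leq\|y_n-\overline{y}\|=\|T_2x_n-T_2\overline{x}\|\leq\|x_n-\overline{x}\|,$$
and $(\|x_n-\overline{x}\|)_{n}$ is nonincreasing, hence convergent; squeezing yields $\|x_n-\overline{x}\|-\|T_2x_n-T_2\overline{x}\|\to 0$. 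Now apply the \emph{definition} of strong nonexpansiveness of $T_2$ (firmly nonexpansive by Fact~\ref{elementarym}(ii), hence strongly nonexpansive by Fact~\ref{bruckreich}(i)) to the bounded sequences $(x_n)$ and the constant sequence $(\overline{x})$: this gives $(x_n-\overline{x})-(T_2x_n-T_2\overline{x})\to 0$, i.e.
$$y_n-x_n\to\overline{y}-\overline{x}=u^*\quad\text{(strongly)},$$
the last equality by Theorem~\ref{evening}\ref{evening5}. Then $y_n=x_n+(y_n-x_n)\rightharpoonup\overline{x}+u^*=\overline{y}$, and the rest of your argument (linearity of weak limits, then Theorem~\ref{StoJ}) goes through verbatim; note that the asymptotic regularity you invoke is never needed. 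Finally, a sign slip: by Theorem~\ref{evening}\ref{evening5} the gap vector satisfies $y-x=u^*$ for $(x,y)\in S$, so the limit of $y_n-x_n$ is $u^*$, not $-u^*$; you use your sign consistently, so nothing breaks, but it conflicts with the paper's normalization.
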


\begin{proof} By \cite[Theorem 3.3]{Baus05},
$(x_{n},y_{n})\rightharpoonup (\overline{x},\overline{y})\in S$.
By Theorem~\ref{StoJ},
$\lambda_{1}\bar{x}+\lambda_{2}\bar{y}\in \Fix J_{A}$.
Therefore $\lambda_{1}x_{n}+\lambda_{2}y_{n}\rightharpoonup
\lambda_{1}\overline{x}+\lambda_{2}\overline{y}\in \Fix J_{A}.$
\end{proof}

\begin{theorem}[Fixed point of resolvent average by proximal point
method]~\\ \label{vocation2}
Fix $x_{0}\in\HH$ and for every $n\in\NN$, set
\begin{equation}\label{pp}
x_{n+1}=(\lambda_{1}J_{A_{1}}+\lambda_{2}J_{A_{2}})(x_{n}).
\end{equation}
Then $x_{n}\rightharpoonup \bar{x}\in \Fix J_{A}$.
\end{theorem}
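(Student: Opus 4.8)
The plan is to recognize the iteration \eqref{pp} as exactly the proximal point algorithm applied to the resolvent average operator $A$, and then to invoke Fact~\ref{terry}. The key observation is that, by Fact~\ref{knownlong}(i) with $\gamma=1$, the update map satisfies $\lambda_{1}J_{A_{1}}+\lambda_{2}J_{A_{2}}=J_{A}$, where $A=(\lambda_{1}J_{A_{1}}+\lambda_{2}J_{A_{2}})^{-1}-\Id$ is maximal monotone by Fact~\ref{knownlong}(ii). Hence the recursion \eqref{pp} is precisely $x_{n+1}=J_{A}(x_{n})=(\Id+A)^{-1}(x_{n})$.

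First I would note that $\Fix J_{A}=A^{-1}(0)$ by Fact~\ref{elementarym}(i) applied to $A$ with $\gamma=1$. Since we are assuming at the start of this section that $\Fix J_{A}\neq\varnothing$, it follows that $A^{-1}(0)\neq\varnothing$, so the hypothesis of Fact~\ref{terry} is met. Applying Fact~\ref{terry} to the maximal monotone operator $A$ then yields immediately that the sequence $(x_{n})$ generated by $x_{n+1}=J_{A}(x_{n})$ converges weakly to a point $\bar{x}\in A^{-1}(0)=\Fix J_{A}$, which is the desired conclusion.

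I expect there to be essentially no obstacle here: the entire content is bookkeeping that identifies \eqref{pp} with the proximal point iteration for $A$, after which Fact~\ref{terry} finishes the argument. The only point requiring a moment's care is verifying that $A$ is maximal monotone (so that $J_{A}$ has full domain and the iteration is well defined for every starting point), but this is exactly Fact~\ref{knownlong}(ii). As a bonus, Fact~\ref{terry} also gives $x_{n+1}-x_{n}\to 0$, though that is not part of the stated conclusion.
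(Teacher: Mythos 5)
Your proof is correct and is essentially identical to the paper's own argument: both identify \eqref{pp} as the proximal point iteration for the resolvent average $A$ via Fact~\ref{knownlong} and conclude by Fact~\ref{terry}. You merely spell out the details the paper leaves implicit (maximal monotonicity of $A$, the identification $\Fix J_{A}=A^{-1}(0)$, and the standing nonemptiness assumption), which is fine.
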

\begin{proof}
As $\lambda_{1}J_{A_{1}}+\lambda_{2}J_{A_{2}}=J_{A}$,
the iteration \eqref{pp}
is the proximal point algorithm.
By Fact~\ref{terry}, $x_{n}\rightharpoonup \bar{x}\in \Fix J_{A}$.
\end{proof}

\begin{theorem}[Fixed point of resolvent average by resolvent
compositions]~\
\label{vocation3}
\begin{enumerate}
\item 
Fix $x_{0}\in\HH$ and for every $n\in\NN$, set
$$x_{n+1}=J_{A_{1}/\lambda_{2}}J_{A_{2}/\lambda_{1}}x_{n}.$$
Then $x_{n}\rightharpoonup x\in 
\Fix J_{A_{1}/\lambda_{2}}J_{A_{2}/\lambda_{1}}$ and 
$\lambda_{1}x_n+ \lambda_{2}J_{A_{2}/\lambda_{1}}x_n
\rightharpoonup 
\lambda_{1}x+ \lambda_{2}J_{A_{2}/\lambda_{1}}x\in \Fix J_{A}$.
\item 
Fix $y_{0}\in\HH$ and for every $n\in\NN$, set
$$y_{n+1}=J_{A_{2}/\lambda_{1}}J_{A_{1}/\lambda_{2}}y_{n}.$$
Then $y_{n}\rightharpoonup y\in
\Fix J_{A_{2}/\lambda_{1}}J_{A_{1}/\lambda_{2}}$
and 
$\lambda_{1}J_{A_{1}/\lambda_{2}}y_n+ \lambda_{2}y_n
\rightharpoonup 
\lambda_{1}J_{A_{1}/\lambda_{2}}y+ \lambda_{2}y\in \Fix J_{A}$.
\end{enumerate}
\end{theorem}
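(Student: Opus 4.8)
The plan is to handle part (i) by the strong‑nonexpansiveness machinery for the composition together with the alternating‑resolvent convergence already set up for Theorem~\ref{vocation1}, and then to deduce part (ii) by symmetry. First I would observe that the driving map in (i), namely $J_{A_{1}/\lambda_{2}}J_{A_{2}/\lambda_{1}}$, is strongly nonexpansive: each resolvent is firmly nonexpansive by Fact~\ref{elementarym}(ii), hence strongly nonexpansive by Fact~\ref{bruckreich}(i), and the composition of strongly nonexpansive maps is strongly nonexpansive by Fact~\ref{bruckreich}(ii). Since we assume $\Fix J_{A}\neq\varnothing$, Corollary~\ref{allsets} yields $E=\Fix(J_{A_{1}/\lambda_{2}}J_{A_{2}/\lambda_{1}})\neq\varnothing$, so Fact~\ref{bruckreich}(iv) gives weak convergence of the iterates $x_{n}=(J_{A_{1}/\lambda_{2}}J_{A_{2}/\lambda_{1}})^{n}x_{0}$ to some $x\in E$. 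This settles the first assertion of (i).

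For the second assertion I would set $y_{n}:=J_{A_{2}/\lambda_{1}}x_{n}$, so that $x_{n+1}=J_{A_{1}/\lambda_{2}}y_{n}$; this is exactly the alternating‑resolvent iteration of Theorem~\ref{vocation1}. Invoking \cite[Theorem 3.3]{Baus05} just as in that proof gives $(x_{n},y_{n})\rightharpoonup(x,\bar y)\in S$, where uniqueness of weak limits identifies the first coordinate with the $x$ already found. Because $(x,\bar y)\in S$ forces $\bar y=J_{A_{2}/\lambda_{1}}x$, I obtain $J_{A_{2}/\lambda_{1}}x_{n}=y_{n}\rightharpoonup J_{A_{2}/\lambda_{1}}x$, whence $\lambda_{1}x_{n}+\lambda_{2}J_{A_{2}/\lambda_{1}}x_{n}\rightharpoonup\lambda_{1}x+\lambda_{2}J_{A_{2}/\lambda_{1}}x$. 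Applying the homeomorphism $T$ of Theorem~\ref{StoJ} to $(x,J_{A_{2}/\lambda_{1}}x)\in S$ then places this limit in $\Fix J_{A}$.

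The main obstacle is precisely the passage ``from $x_{n}\rightharpoonup x$ infer $J_{A_{2}/\lambda_{1}}x_{n}\rightharpoonup J_{A_{2}/\lambda_{1}}x$'': weak convergence is not preserved by nonexpansive mappings in general, so this step cannot be argued by continuity alone. The device that rescues it is the pairing with the alternating scheme, where \cite[Theorem 3.3]{Baus05} delivers weak convergence of the joint sequence $(x_{n},y_{n})$ into $S$; its second coordinate supplies the required weak limit of $J_{A_{2}/\lambda_{1}}x_{n}$ for free. Everything else is routine bookkeeping.

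Finally, part (ii) would follow by interchanging the roles of $A_{1},A_{2}$ and of $\lambda_{1},\lambda_{2}$. Under this swap the average $J_{A}=\lambda_{1}J_{A_{1}}+\lambda_{2}J_{A_{2}}$ is unchanged, the sets $E$ and $F$ are exchanged, and $J_{A_{1}/\lambda_{2}}J_{A_{2}/\lambda_{1}}$ becomes $J_{A_{2}/\lambda_{1}}J_{A_{1}/\lambda_{2}}$, so the conclusion of (i), with its averaged sequence $\lambda_{1}x_{n}+\lambda_{2}J_{A_{2}/\lambda_{1}}x_{n}$, transforms verbatim into $\lambda_{1}J_{A_{1}/\lambda_{2}}y_{n}+\lambda_{2}y_{n}\rightharpoonup\lambda_{1}J_{A_{1}/\lambda_{2}}y+\lambda_{2}y\in\Fix J_{A}$, which is exactly (ii). Alternatively one repeats the argument with $y_{n}$ playing the role of the iterated variable and $J_{A_{1}/\lambda_{2}}y_{n}$ the auxiliary sequence.
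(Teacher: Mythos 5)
Your proposal is correct and takes essentially the same route as the paper: both establish $x_{n}\rightharpoonup x\in E$ via Fact~\ref{elementarym}(ii) and Fact~\ref{bruckreich}, and both overcome the failure of weak sequential continuity of $J_{A_{2}/\lambda_{1}}$ by appealing to \cite[Theorem~3.3]{Baus05} together with the homeomorphism results of Section~\ref{keyresultnow}. The only cosmetic difference is that the paper uses part (iii) of that theorem (strong convergence $J_{A_{2}/\lambda_{1}}x_{n}-x_{n}\to u^{*}$, then Theorem~\ref{plumber}), whereas you use the joint weak convergence $(x_{n},y_{n})\rightharpoonup(x,\bar{y})\in S$ and Theorem~\ref{StoJ}; the two are interchangeable.
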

\begin{proof}
(i). 
Since $J_{A_{1}/\lambda_{2}}, J_{A_{2}/\lambda_{1}}$
are firmly nonexpansive, Fact~\ref{bruckreich} shows that
$J_{A_{1}/\lambda_{2}}J_{A_{2}/\lambda_{1}}$ is
strongly nonexpansive and that $x_{n}\rightharpoonup x\in
\Fix J_{A_{1}/\lambda_{2}}J_{A_{2}/\lambda_{1}}$. 
By \cite[Theorem~3.3(iii)]{Baus05}, 
$J_{A_{2}/\lambda_{1}}x_n-x_n\to u^*$,
which implies that
$J_{A_{2}/\lambda_{1}}x_n \rightharpoonup u^*+x$.
Hence 
$\lambda_{1}x_n+ \lambda_{2}J_{A_{2}/\lambda_{1}}x_n
\rightharpoonup
\lambda_1 x + \lambda_2(u^*+x)
= x+ \lambda_2 u^*
= \lambda_1 x + \lambda_2 J_{A_{2}/\lambda_{1}}x
\in \Fix J_A$ by
Theorem~\ref{plumber}\ref{monday1}. 
(ii). The proof is similar to the proof of (i). 
\end{proof}

\begin{remark}(i). Note that when $\HH=\RR^N$, the
weak convergence and norm convergence coincide.
Hence in $\RR^N$, the convergence in
Theorems~\ref{vocation1}, \ref{vocation2}, \ref{vocation3}
is norm convergence.

(ii). As $J_{A}$ (even a projection mapping) need not be weakly
sequentially continuous, one cannot conclude directly that
$\lambda_{1}x_{n}+\lambda_{2}J_{A_{2}/\lambda_{1}}x_{n}\rightharpoonup
\lambda_{1}x+\lambda_{2}J_{A_{2}/\lambda_{1}}x$
in Theorem~\ref{vocation3}(i) or that
$\lambda_{1}J_{A_{1}/\lambda_{2}}y_{n}+
\lambda_{2}y_{n}\rightharpoonup \lambda_{1}J_{A_{1}/\lambda_{2}}y+
\lambda_{2}y$ in Theorem~\ref{vocation3}(ii).
Indeed, following Zarantonello \cite[page~245]{Zara71},
consider the Hilbert sequence space $\ell^{2}$.
Let $\ball:=\menge{x\in {\ell}^{2}}{\|x\|\leq 1}$ and
$(e_{n})_{n\in\NN}$ be the basis vectors, i.e.,
$e_{n}=(\underbrace{0,\cdots, 0, 1}_{\text{$n$ terms}}, 0,\cdots)$. We have
$$e_{1}+e_{n}\rightharpoonup e_{1},\quad P_{\ball}(e_{1})=e_{1},$$
$$(\forall n \geq 2)\ P_{\ball}(e_{1}+e_{n})=
\frac{e_{1}+e_{n}}{\sqrt{2}}\rightharpoonup\frac{e_{1}}{\sqrt{2}}
\neq P_{\ball}(e_{1}).$$
Hence $P_{\ball}$ is not weakly sequentially continuous.
However, in the proof of Theorem~\ref{vocation3}, we invoked
the analysis in \cite[Section~3.2]{Baus05} which allowed us to
obtain the weak convergence conclusion. 
\end{remark}

We end with three examples to illustrate our main results. 

\begin{example} Consider
$$C_{1}=\menge{(x,y)}{ x^2+(y-2)^2\leq 1},\quad
C_{2}=\menge{(x,0)}{ x\in\RR}.$$
Then $C_{1}\cap C_{2}=\varnothing$.
\emph{We claim that when $\lambda_{1}+\lambda_{2}=1$ with $\lambda_{i}> 0$,
$$\Fix(\lambda_{1}P_{C_{1}}+\lambda_{2}P_{C_{2}})=\{(0,\lambda_{1})\}.$$}
In this example, $\Fix P_{C_{1}}P_{C_{2}}$ is easier to compute
than $\Fix(\lambda_{1}P_{C_{1}}+\lambda_{2}P_{C_{2}})$.
Indeed, we have
\begin{align*} P_{C_{1}}(x,y) &
=\begin{cases}
\left(\displaystyle \frac{x}{\sqrt{x^2+(y-2)^2}},
\frac{y-2}{\sqrt{x^2+(y-2)^2}}+2\right), & \text{ if $(x,y)\not\in
C_{1}$}\\[+4mm]
(x,y), & \text{ if $(x,y)\in C_{1}$},
\end{cases}
\\[+4mm]
P_{C_{2}}(x,y) & =(x,0).
\end{align*}
Thus,
\begin{align*}
P_{C_{1}}P_{C_{2}}(x,y) &= P_{C_{1}}(x,0)\\
&=\left(\frac{x}{\sqrt{x^2+4}},\frac{-2}{\sqrt{x^2+4}}+2\right),
\end{align*}
since $(x,0)\not\in C_{1}$.
Start with $(x_{0},y_0)$. Consider
the composition algorithm $(x_{n+1},y_{n+1})=P_{C_{1}}P_{C_{2}}(x_{n},y_{n})$. We have
$$(x_{n+1},y_{n+1})=\left(\frac{x_{n}}{\sqrt{x_{n}^2+4}},\frac{-2}{\sqrt{x_{n}^2+4}}+2\right).$$
It follows that
$$|x_{n+1}|=\frac{|x_{n}|}{\sqrt{x_{n}^2+4}}\leq \frac{|x_{n}|}{2}\leq\cdots\leq \frac{|x_{0}|}{2^{n+1}},$$
and this gives $x_{n+1}\rightarrow 0$, consequently $y_{n+1}\rightarrow 1$. Therefore,
$$(0,1)\in \Fix P_{C_{1}}P_{C_{2}}.$$
In fact, by using
$$(x,y)=\left(\frac{x}{\sqrt{x^2+4}},\frac{-2}{\sqrt{x^2+4}}+2\right),$$
we see that $(x,y)=(0,1)$. Hence $\Fix P_{C_{1}}P_{C_{2}}=\{(0,1)\}$. Therefore, by Theorem~\ref{twosets}(ii)
\begin{equation}\label{friday1}
\Fix(\lambda_{1}P_{C_{1}}+\lambda_{2}P_{C_{2}})=\lambda_{1}(0,1)
+\lambda_{2}P_{C_{2}}(0,1)=(0,\lambda_{1}),
\end{equation}
since $P_{C_{2}}(0,1)=(0,0)$.

For $P_{C_{2}}P_{C_{1}}$, since $P_{C_{2}}P_{C_{1}}(x_{1},y_{1})\in C_{2}$ (not in $C_1$) we have
$$(x_{n+1},y_{n+1})=P_{C_{2}}P_{C_{1}}(x_{n},y_{n})=\bigg(\frac{x_{n}}{\sqrt{x_{n}^2+4}},0\bigg)\quad \forall \ n\geq 2,$$
and $\Fix(P_{C_{2}}P_{C_{1}})=\{(0,0)\}.$ Then for $(0,0)\in \Fix(P_{C_{2}}P_{C_{1}})$,
\begin{equation}\label{friday2}
\lambda_{1}P_{C_{1}}(0,0)+\lambda_{2}(0,0)=\lambda_{1}(0,1)=(0,\lambda_{1}),
\end{equation}
which shows also that
$\Fix(\lambda_{1}P_{C_{1}}+\lambda_{2}P_{C_{2}})=\{(0,\lambda_{1})\}.$

On the other hand, the averaged projection method proceeds as follows.
\begin{align}
(\lambda_{1}P_{C_{1}}+\lambda_{2}P_{C_{2}})(x,y) & =
 \begin{cases}
\lambda_{1}\left(\displaystyle
\frac{x}{\sqrt{x^2+(y-2)^2}},\frac{y-2}{\sqrt{x^2+(y-2)^2}}+2\right)+\lambda_{2}(x,0),
& \text{ if $(x,y)\not\in C_{1}$;}\\[+4mm]
\lambda_{1}(x,y)+\lambda_{2}(x,0), & \text{ if $(x,y)\in C_{1}$},
\end{cases}\nonumber\\
(x_{n+1},y_{n+1}) & =(\lambda_{1}P_{C_{1}}+\lambda_{2}P_{C_{2}})
(x_{n},y_{n}). \label{trueform}
\end{align}
Start with any $(x_{0},y_{0})\in\RR^2$.

\noindent {\sl Claim: If $n$ is sufficiently large, then $y_{n}<2$ and
$(x_{n},y_{n})\not\in C_{1}$.}

\noindent  To see that, for $y_{n}\geq 2$ consider two cases:
if $(x_{n},y_{n})\in C_{1}$, then $y_{n}\geq 1$,
and $$(x_{n+1},y_{n+1})=
\lambda_{1}(x_{n},y_{n})+\lambda_{2}(x_{n},0)=(x_{n},\lambda_{1}y_{n}),$$
which gives $y_{n+1}=\lambda_{1}y_{n}$; if $(x_{n},y_{n})\not\in C_{1}$, then
$\sqrt{x_{n}^2+(y_{n}-2)^2}\geq 1$ and
$$(x_{n+1},y_{n+1})=\lambda_{1}\left(\frac{x_{n}}{\sqrt{x_{n}^2+(y_{n}-2)^2}},
\frac{y_{n}-2}{\sqrt{x_{n}^2+
(y_{n}-2)^2}}+2\right)+\lambda_{2}(x_{n},0),$$
so that
$$y_{n+1}=\lambda_{1}\left( \frac{y_{n}-2}{\sqrt{x_{n}^2+
(y_{n}-2)^2}}+2\right)\leq \lambda_{1}(y_{n}-2+2)=\lambda_{1}y_{n}.$$
Furthermore, whenever $(x_{n},y_{n})\in C_{1}$,
we have $y_{n+1}=\lambda_{1}y_{n}$ and $y_{n}\geq 1$.
Altogether, $y_{n+1}\leq \lambda_{1}y_{n}$.
This implies that the averaged projection iterations can only
stay in $C_{1}$ for only a finite number of times and that for $n$ sufficiently large
$y_{n}<2$.
Hence for all $n$ sufficiently large,
the average projection algorithm \eqref{trueform}
gives $y_{n}<2$ and
$$(x_{n+1},y_{n+1})=\left(\lambda_{1}\frac{x_{n}}{\sqrt{x_{n}^2+(y_{n}-2)^2}}+\lambda_{2}x_{n},
\lambda_{1}\bigg(\frac{y_{n}-2}{\sqrt{x_{n}^2+(y_{n}-2)^2}}+2\bigg)\right).
$$
Moreover, as for $n$ sufficiently large
$$1\leq \frac{y_{n}-2}{\sqrt{x_{n}^2+(y_{n}-2)^2}}+2\leq 2,$$
we have $\lambda_{1}\leq y_{n+1}\leq \lambda_{1}2<2$.
Then $(x,y)\in\Fix(\lambda_{1}P_{C_{1}}+\lambda_{2}
P_{C_{2}})$ means
$$(x,y)=\left(\lambda_{1}\frac{x}{\sqrt{x^2+(y-2)^2}}+\lambda_{2}x,
\lambda_{1}\bigg(\frac{y-2}{\sqrt{x^2+(y-2)^2}}+2\bigg)\right),
$$
which gives only one solution
$(x,y)=(0,\lambda_{1})$ in view of $\lambda_{1}>0, \lambda_{1}\leq y<2$.
Again this shows that
$$\Fix(\lambda_{1}P_{C_{1}}+\lambda_{2}
P_{C_{2}})=\{(0,\lambda_{1})\},$$
which is consistent with the results given
by \eqref{friday1} and \eqref{friday2}.
(See also \cite[Example 5.3]{baus1994} for more
on the rate of convergence of alternating projections.)
\end{example}

Now what can one say about the relationships among
$\Fix(\lambda_{1}\prox_{f_{1}}+\lambda_{2}\prox_{f_{2}})$,
$ \Fix(\prox_{f_{1}}\prox_{f_{2}}),$ $\Fix(\prox_{f_{2}}\prox_{f_{1}})$ ?
It is tempting to conjecture that for fixed points of
alternating iterations:
$$x=\prox_{f_{1}}y, \quad y=\prox_{f_{2}}x, $$
one has $\lambda_{1}x+\lambda_{2}y\in
\Fix(\lambda_{1}\prox_{f_{1}}+\lambda_{2}\prox_{f_{2}})$
--- and this is true for projections --- but
this is not right in general, as the following examples show.

\begin{example}
Consider $f_{1}(x)=x^2, f_{2}(x)=(x-1)^2$ for $x\in \RR$. Let $\lambda_{1}+\lambda_{2}=1$ with
$\lambda_{i}>0$. Then $\argmin f_{1}=\{0\}, \argmin f_{2}=\{1\},$ so $\argmin f_{1}\cap\argmin f_{2}=\varnothing$. As
$\nabla f_{1}(x)=2x, \nabla f_{1}(x)/\lambda_{2}=2x/\lambda_{2}, \nabla f_{2}(x)=2(x-1), \nabla f_{2}(x)/\lambda_{1}=
2(x-1)/\lambda_{1}$, for every $z\in \RR$,
$$\prox_{f_{1}}(z)=\frac{z}{3},\quad  \prox_{f_{2}}(z)=\frac{z+2}{3},$$
$$\prox_{f_{1}/\lambda_{2}}(z)=\frac{\lambda_{2}z}{2+\lambda_{2}},\quad \prox_{f_{2}/\lambda_{1}}(z)=
\frac{\lambda_{1}z+2}{2+\lambda_{1}}.$$
Moreover, $$x\mapsto \prox_{f_{1}/\lambda_{2}}\prox_{f_{2}/\lambda_{1}}(x)=\frac{\lambda_{2}}{2+\lambda_{2}}
\frac{\lambda_{1}x+2}{2+\lambda_{1}},$$
$$y\mapsto \prox_{f_{2}/\lambda_{1}}\prox_{f_{1}/\lambda_{2}}(y)=\frac{1}{2+\lambda_{1}}\bigg(
\frac{\lambda_{1}\lambda_{2}y}{2+\lambda_{2}}+2\bigg),$$
$$z\mapsto (\lambda_{1}\prox_{f_{1}}+\lambda_{2}\prox_{f_{2}})(z)=\frac{z}{3}+\frac{2\lambda_{2}}{3}.$$
We have
\begin{align*} \Fix (\lambda_{1}\prox_{f_{1}}+\lambda_{2}\prox_{f_{2}})& =\{\lambda_{2}\},\\
\Fix(\prox_{f_{1}/\lambda_{2}}\prox_{f_{2}/\lambda_{1}}) &=\{\lambda_{2}/3\},\\
\Fix(\prox_{f_{2}/\lambda_{1}}\prox_{f_{1}/\lambda_{2}}) &=\{(2+\lambda_{2})/3\},\\
S=\{(x,y)|\ x=\prox_{f_{1}/\lambda_{2}}(y), y=\prox_{f_{2}/\lambda_{1}}(x)\}& =\{(\lambda_{2}/3, (\lambda_{2}
+2)/3)\}.
\end{align*}
As in Theorem~\ref{amy1}, for $x\in E=\{\lambda_{2}/3\}$,
$$
\lambda_{1}x+\lambda_{2}\prox_{f_{2}/\lambda_{1}}(x) = \lambda_{1}\lambda_{2}/3+\lambda_{2}
\frac{\lambda_{1}\lambda_{2}/3+2}{2+\lambda_{1}}=\lambda_{2}\in \Fix (\lambda_{1}\prox_{f_{1}}+\lambda_{2}\prox_{f_{2}}).$$
As in Theorem~\ref{amy2}, for $y\in F=\{(\lambda_{2}+2)/3\},$
$$\lambda_{1}\prox_{f_{1}/\lambda_{2}}(y)+\lambda_{2}y =\lambda_{1}\frac{\lambda_{2}}{2+\lambda_{2}}
\frac{2+\lambda_{2}}{3}+\lambda_{2}\frac{2+\lambda_{2}}{3}=\lambda_{2}\in \Fix (\lambda_{1}\prox_{f_{1}}+\lambda_{2}\prox_{f_{2}}).$$
As in Theorem~\ref{amy3}, for $z\in \Fix (\lambda_{1}\prox_{f_{1}}+\lambda_{2}\prox_{f_{2}})=\{\lambda_{2}\}$,
$$T^{-1}(\lambda_{2})=(\prox_{f_{1}}(\lambda_{2}),
\prox_{f_{2}}(\lambda_{2}))=(\lambda_{2}/3, (\lambda_{2}+2)/3)\in S.$$
As in Theorem~\ref{characterization}, the dual solution satisfies
$$-\bar{\phi}=y-x=2/3=\prox_{f_{2}/\lambda_{1}}(x)-x
=y-\prox_{f_{1}/\lambda_{2}}(y),$$ for $(x,y)\in S$.

We now show that for fixed points of alternating iterations
$$x=\prox_{f_{1}}y, \quad y=\prox_{f_{2}}x,
$$
one has $\lambda_{1}x+\lambda_{2}y \notin
\Fix(\lambda_{1}\prox_{f_{1}}+\lambda_{2}\prox_{f_{2}})$.
Indeed, as
$$x\mapsto \prox_{f_{1}}\prox_{f_{2}}(x)=\frac{x+2}{9},$$
$\Fix(\prox_{f_{1}}\prox_{f_{2}})=\{1/4\}$. With $x=1/4$, we have
$$\lambda_{1}x+\lambda_{2}\prox_{f_{2}}(x)=\lambda_{1}1/4+\lambda_{2}\frac{1/4+2}{3}=
1/4+\lambda_{2}/2\not=\lambda_{2}\in \Fix (\lambda_{1}\prox_{f_{1}}+\lambda_{2}\prox_{f_{2}}),$$
unless $\lambda_{2}=1/2$.
Similarly, one can also show that
$$y\mapsto \prox_{f_{2}}\prox_{f_{1}}(y)=\frac{y+6}{9},$$
has $\Fix (\prox_{f_{2}}\prox_{f_{1}})=\{3/4\}$. With $y=3/4$,
$$\lambda_{1}\prox_{f_{1}}(y)+\lambda_{2}y=1/4+\lambda_{2}/2\neq \lambda_{2}\in \Fix (\lambda_{1}\prox_{f_{1}}+\lambda_{2}\prox_{f_{2}}),$$
unless $\lambda_{2}=1/2$.

However, for $\lambda_{2}f_{1}(x)=\lambda_{2}x^2$, $\lambda_{1}f_{2}(x)=\lambda_{1}(x-1)^2$ with
$$\prox_{\lambda_{2}f_{1}}(x)=\frac{x}{2\lambda_{2}+1}, \quad \prox_{\lambda_{1}f_{2}}(x)=\frac{x+2\lambda_{1}}{2\lambda_{1}+1},$$
for every $x\in \RR$, we have
$$(\lambda_{1}\prox_{\lambda_{2}f_{1}}+\lambda_{2}\prox_{\lambda_{1}f_{2}})(x)=
\lambda_{1}\frac{x}{2\lambda_{2}+1}+\lambda_{2}\frac{x+2\lambda_1}{2\lambda_{1}+1}.$$
By solving
$$x=\lambda_{1}\frac{x}{2\lambda_{2}+1}+\lambda_{2}\frac{x+2\lambda_1}{2\lambda_{1}+1},$$ one indeed has
$$\Fix(\lambda_{1}\prox_{\lambda_{2}f_{1}}+\lambda_{2}\prox_{\lambda_{1}f_{2}})=\{1/4+\lambda_{2}/2\}
=\menge{\lambda_{1}x+\lambda_{2}y}{x=\prox_{f_{1}}y, y=\prox_{f_{2}}(x)},$$
as Theorem~\ref{amy3} or Corollary~\ref{alternativef1f2} shows.
\end{example}

\begin{example} Consider
$f_{1}(x)=|x|, f_{2}(x)=(x-1)^2$ for every $x\in \RR$ and $\lambda_{1}+\lambda_{2}=1$
with $\lambda_{i}>0$. Then $\argmin f_{1}=\{0\}, \argmin f_{2}=\{1\},$ so $\argmin f_{1}\cap\argmin f_{2}=\varnothing$.
As
$$\partial f_1(x)=\begin{cases}
1 & \text{ if $x>0$},\\
[-1,1] & \text{ if $x=0$},\\
-1 & \text{ if $x<0$},
\end{cases}
$$
we have
$$\prox_{f_{1}}(x)=\begin{cases}
x-1 &\text{ if $x>1$},\\
0 & \text{ if $-1\leq x\leq 1$},\\
x+1 &\text{ if $x<-1$},
\end{cases}
$$
$$\prox_{f_{2}}(x)=\frac{x+2}{3}.$$
Then
$$(\lambda_{1}\prox_{f_{1}}+\lambda_{2}\prox_{f_{2}})(x)
=\begin{cases}
\lambda_{1}(x-1)+\frac{\lambda_{2}(x+2)}{3} & \text{ if $x>1$},\\
\frac{\lambda_{2}(x+2)}{3} &\text{ if $-1\leq x\leq 1$},\\
\lambda_{1}(x+1)+\frac{\lambda_{2}(x+2)}{3} &\text{ if $x<-1$},
\end{cases}
$$
and
$$\Fix(\lambda_{1}\prox_{f_{1}}+\lambda_{2}\prox_{f_{2}})=\left\{\frac{2\lambda_{2}}{2+\lambda_{1}}\right\}.
$$
Moreover,
\begin{align*}
\prox_{f_{1}}\prox_{f_{2}}(x) &= \prox_{f_{1}}((x+2)/3)\\
&=\begin{cases}
\frac{x-1}{3} & \text{ if $x>1$},\\
0  & \text{ if $-5\leq x\leq 1$},\\
\frac{x+5}{3} &\text{ if $x<-5$},
\end{cases}
\end{align*}
and $$\Fix(\prox_{f_{1}}\prox_{f_{2}})=\{0\}.$$
For $x\in E=\Fix(\prox_{f_{1}}\prox_{f_{2}}),$ i.e., $x=0$,
\begin{align*}
\lambda_{1}x+\lambda_{2}\prox_{f_{2}}(x) & =\lambda_{1}0+\lambda_{2}\prox_{f_{2}}(0)\\
&= \frac{2\lambda_{2}}{3}\not\in \Fix(\lambda_{1}\prox_{f_{1}}+\lambda_{2}\prox_{f_{2}}).
\end{align*}
Therefore, one cannot use the fixed point set of
$$x=\prox_{f_{1}}y, \quad y=\prox_{f_{2}}x,$$
to recover $\Fix(\lambda_{1}\prox_{f_{1}}+\lambda_{2}\prox_{f_{2}})$.

Now let us consider $\prox_{f_{1}/\lambda_{2}}, \prox_{f_{2}/\lambda_{1}}$. As
$$\partial f_1(x)/\lambda_{2}=\begin{cases}
1/\lambda_{2} & \text{ if $x>0$,}\\
[-1/\lambda_{2},1/\lambda_{2}] & \text{ if $x=0$,}\\
-1/\lambda_{2} & \text{ if $x<0$},
\end{cases}
$$
$$\partial f_{2}(x)/\lambda_{1}=\frac{2(x-1)}{\lambda_{1}},
$$
we have
$$\prox_{f_{1}/\lambda_{2}}(x)=\begin{cases}
x-1/\lambda_{2} &\text{ if $x>1/\lambda_{2}$},\\
0 & \text{ if $-1/\lambda_{2}\leq x\leq 1/\lambda_{2}$},\\
x+1/\lambda_{2} &\text{ if $x<-1/\lambda_{2}$},
\end{cases}
$$
$$ \prox_{f_{2}/\lambda_{1}}(x)=\frac{\lambda_{1}x+2}{2+\lambda_{1}}.
$$
It follows that
\begin{align*}
\prox_{f_{1}/\lambda_{2}}\prox_{f_{2}/\lambda_{1}}(x) &=\prox_{f_{1}/\lambda_{2}}((\lambda_{1}x+2)
/(2+\lambda_{1}))\\
&=\begin{cases}
\frac{\lambda_{1}x+2}{2+\lambda_{1}}-\frac{1}{\lambda_{2}}
& \text{ if $x>\frac{3}{\lambda_{2}},$}\\
0 & \text{ if $\frac{-(3+\lambda_{2})}{\lambda_{1}\lambda_{2}}\leq x\leq \frac{3}{\lambda_{2}},$}\\
\frac{\lambda_{1}x+2}{2+\lambda_{1}}+\frac{1}{\lambda_{2}}
& \text{ if $x<\frac{-(3+\lambda_{2})}{\lambda_{1}\lambda_{2}}$,}
\end{cases}
\end{align*}
and that
$$\Fix (\prox_{f_{1}/\lambda_{2}}\prox_{f_{2}/\lambda_{1}})=\{0\}.$$
For $x\in E=\Fix (\prox_{f_{1}/\lambda_{2}}\prox_{f_{2}/\lambda_{1}})$, i.e., $x=0$, we have
$$\lambda_{1}0+\lambda_{2}\prox_{f_{2}/\lambda_{1}}(0)=\lambda_{2}\frac{\lambda_{1}0+2}{2+\lambda_{1}}=
\frac{2\lambda_{2}}{2+\lambda_{1}}\in
\Fix(\lambda_{1}\prox_{f_{1}}+\lambda_{2}\prox_{f_{2}}).$$
Again, these testify that
$$\Fix(\lambda_{1}\prox_{f_{1}}+\lambda_{2}\prox_{f_{2}})=\{\lambda_{1}x+\lambda_{2}\prox_{f_{2}/\lambda_{1}}
(x)|\; x\in E\}.$$
Similarly, one can verify that
\begin{align*}
\prox_{f_{2}/\lambda_{1}}\prox_{f_{1}/\lambda_{2}}(x)=
&\begin{cases}
\frac{\lambda_{1}(x-1/\lambda_{2})+2}{2+\lambda_{1}}
& \text{ if $x>1/\lambda_{2}$},\\
 \frac{2}{2+\lambda_{1}} & \text{ if $-1/\lambda_{2}\leq x \leq 1/\lambda_{2}$},\\
\frac{\lambda_{1}(x+1/\lambda_{2})+2}{2+\lambda_{1}}
& \text{ if $x<-1/\lambda_{2}$,}
\end{cases}
\end{align*}
and
$$F=\Fix(\prox_{f_{2}/\lambda_1}\prox_{f_{1}/\lambda_{2}})=\left\{\frac{2}{2+\lambda_{1}}\right\}.$$
For $y\in F$, i.e., $0< y=2/(2+\lambda_{1})<1$,
$$\lambda_{1}\prox_{f_{1}/\lambda_{2}}(y)+\lambda_{2}y=\lambda_{1}0+\lambda_{2}\frac{2}{(2+\lambda_{1})}
=\frac{2\lambda_{2}}{2+\lambda_{1}}\in \Fix (\lambda_{1}\prox_{f_{1}}+\lambda_{2}\prox_{f_{2}}).
$$
Again,
$$\Fix(\lambda_{1}\prox_{f_{1}}+\lambda_{2}\prox_{f_{2}})=\{\lambda_{1}\prox_{f_{1}/\lambda_{2}}(y)
+\lambda_{2}y|\
|\; y\in F\}.$$
Finally, since $$S=\{(x,\prox_{f_{2}/\lambda_{1}}(x))|\ x\in E\}=\left\{\bigg(0,\frac{2}{2+\lambda_{1}}\bigg)\right\},$$
we have $\bar{\phi}=-2/(2+\lambda_{1}).$
\end{example}
More examples can be constructed by using the proximal mapping calculus developed by Combettes and Wajs \cite{patrick2}.

\begin{remark}
We conclude by pointing out that the situation for three or more functions
(or sets if we work with indicator functions) is not clear at the moment.
For instance, as pointed out by De Pierro and attributed to Iusem
\cite{DePierro},
one may have 3 sets such that least squares solutions exist but the
existence of fixed points of compositions depends on the \emph{order}
of the projections.
To fully understand these situations is an interesting topic for further
research.
\end{remark}

\section*{Acknowledgments}
Xianfu Wang was partially
supported by the Natural Sciences and Engineering Research Council
of Canada.
Heinz Bauschke was partially supported by the Natural Sciences and
Engineering Research Council of Canada and by the Canada Research Chair
Program.


\begin{thebibliography}{99}
\bibitem{attouch} H. Attouch and M. Th\'era, A general duality principle for the sum of two operators, \emph{J. Convex Anal.}~3 (1996), 1--24.

\bibitem{bartz} S. Bartz, H. H. Bauschke, J. M. Borwein, S. Reich and X. Wang, Fitzpatrick functions, cyclic monotonicity and Rockafellar's antiderivative, \emph{Nonlinear Anal.}~66 (2007), 1198--1223.

\bibitem{BauBorSVA}
H. H. Bauschke and J. M. Borwein,
On the convergence of von Neumann's alternating projection
algorithm for two sets,
\emph{Set-Valued Anal.}~1 (1993), 185--212.

\bibitem{baus1994} H. H. Bauschke and J. M. Borwein, Dykstra's alternating projection
algorithm for two sets, \emph{J. Approx. Theory}~79 (1994), 418--443.

\bibitem{Baus96}H. H. Bauschke and J. M. Borwein,  On projection algorithms for
solving convex feasibility problems, \emph{SIAM Rev.}~38  (1996), 367--426.


\bibitem{Baus04} H. H. Bauschke, P. L. Combettes and D. R. Luke,
Finding best approximation pairs relative to two closed convex sets in Hilbert spaces,
\emph{J. Approx. Theory}~127 (2004), 178--192.

\bibitem{Baus05} H. H. Bauschke, P. L. Combettes and S. Reich, The asymptotic behavior of
the composition of two resolvents, \emph{Nonlinear Anal.}~60 (2005), 283--301.


\bibitem{Baus08} H. H. Bauschke, R. Goebel, Y. Lucet, and X. Wang, The proximal average: basic theory,
\emph{SIAM J. Optim.}~19 (2008), 766--785.

\bibitem{lucet} H. H. Bauschke, Y. Lucet and M. Trienis, How to transform one convex
function continuously into another, \emph{SIAM Rev.}~50 (2008), 115--132.

\bibitem{moffat} H. H. Bauschke, S. M. Moffat, and X. Wang, 
The resolvent average
for positive semidefinite matrices, \emph{Linear Algebra Appl.}~432 
(2010), 1757--1771.

\bibitem{Baus09} H. H. Bauschke and X. Wang, The kernel average for two convex functions and
its application to the extension and representation of monotone operators, \emph{Trans. Amer. Math. Soc.}~361 (2009), 5947--5965.


\bibitem{bruck} R. E. Bruck and S. Reich, Nonexpansive projections and resolvents of accretive
operators in Banach spaces, \emph{Houston J. Math.}~3 (1977), 459--470.

\bibitem{charles} C. L. Byrne, \emph{Signal Processing: A Mathematical Approach}, A.K. Peters, 2005.

\bibitem{cheney} W. Cheney and A. A. Goldstein, Proximity maps for convex sets, \emph{Proc. Amer. Math. Soc.}~10 (1959), 448--450.


\bibitem{comb} P. L. Combettes, Solving monotone inclusions via compositions of nonexpansive averaged operators, \emph{Optimization}~53  (2004), 475--504.

\bibitem{patrick2} P. L. Combettes and V. R. Wajs, Signal recovery by proximal forward-backward splitting, \emph{Multiscale Model. Simul.}~4 (2005), 1168--1200.

\bibitem{DePierro} A. R. De Pierro,
From parallel to sequential projection methods and vice versa in convex
feasibility: results and conjectures,
in {\em Inherently Parallel Algorithms in Feasibility and Optimization
and their Applications}
(D.\ Butnariu, Y.\ Censor and S.\ Reich, Eds.)
pp. 187--201.
Elsevier, Amsterdam, 2001.


\bibitem{luke} D. R. Luke, Finding best approximation pairs relative to a convex and prox-regular
set in a Hilbert space, \emph{SIAM J. Optim.}~19 (2008), 714--739.

\bibitem{minty} G. J. Minty, Monotone (nonlinear) operators in Hilbert space, \emph{Duke Math. J.}~29 (1962), 341--346.

\bibitem{moreau}J. J. Moreau, Proximitéet dualité dans un espace hilbertien, \emph{Bull. Soc.
Math. France}~93 (1965), 273--299.

\bibitem{rockmax} R. T. Rockafellar, On the maximality 
of sums of nonlinear monotone operators, \emph{Trans. Amer. Math. Soc.}~149 
(1970), 75--88.

\bibitem{Rock70}
R.\ T.\ Rockafellar,
\emph{Convex Analysis},
Princeton University Press, Princeton, 1970.

\bibitem{rockprox}R.\ T.\ Rockafellar, Monotone operators and the
proximal point algorithm,  \emph{SIAM J. Control Optim.}~14 
(1976), 877--898.

\bibitem{Rock98}
R.\ T.\ Rockafellar and R.\ J-B Wets,
\emph{Variational Analysis},
Springer-Verlag, New York, 1998.

\bibitem{Simons}
S.\  Simons, \emph{From Hahn-Banach to Monotonicity},  Lecture Notes in Mathematics, Vol. 1693,
Springer-Verlag, 2008.

\bibitem{Zalinescu}{C.\ Z\u{a}linescu},
\emph{Convex Analysis in General Vector Spaces}, World Scientific
Publishing, 2002.

\bibitem{Zara71} E. H. Zarantonello,
Projections on convex sets in Hilbert space and spectral theory,
in: {\em Contributions to Nonlinear Functional Analysis,}
(E. H. Zarantonello, Ed.) pp. 237--424,
Academic Press, New York, 1971.

\end{thebibliography}
\end{document}